\newtheorem{theorem}{Theorem}[section]
\newtheorem{lemma}[theorem]{Lemma}
\newtheorem{proposition}[theorem]{Proposition}
\newtheorem{corollary}[theorem]{Corollary}
\newtheorem{definition}[theorem]{Definition}
\newtheorem{example}[theorem]{Example}
\newtheorem{problem}{Problem}
\newcommand\xxoverset[3]{%
  \resizebox{#1+\widthof{\scriptsize #2}}{\height}{$#3$}}
\newcommand\extoverset[3][0pt]{%
  \mathrel{\overset{\textup{#2}}{\xxoverset{#1}{#2}{#3}}}}
\newcommand{\calK}{\mathcal K}
\newcommand{\calL}{\mathcal L}
\newcommand{\calM}{\mathcal M}
\newcommand{\calN}{\mathcal N}
\newcommand{\calP}{\mathcal P}
\newcommand{\calQ}{\mathcal Q}
\newcommand{\calR}{\mathcal R}
\newcommand{\calT}{\mathcal T}
\newcommand{\mix}{\mathbin{\diamondsuit}}
\newcommand{\ijpathSet}[1]{\extoverset{~#1~}{-}}
\newcommand{\ijpath}[2]{\ijpathSet{[#1, #2]}}
\newcommand{\UVG}[2]{X_{\wedge}(#1 | #2)}
\newcommand{\UVGM}{\UVG{\calM_1}{\calM_2}}
\newcommand{\LVG}[2]{X_{\vee}(#1 | #2)}
\newcommand{\LVGM}{\LVG{\calM_1}{\calM_2}}
\newcommand{\VG}[2]{X(#1 | #2)}
\newcommand{\VGK}{\VG{\calK_1}{\calK_2}}
\newcommand{\VGL}{\VG{\calL_1}{\calL_2}}
\newcommand{\VGN}{\VG{\calN_1}{\calN_2}}
\newcommand{\VGKb}{\VG{\calK_2}{\calK_1}}
\newcommand{\VGLb}{\VG{\calL_2}{\calL_1}}
\newcommand{\VGNb}{\VG{\calN_2}{\calN_1}}
\newcommand{\G}{\Gamma}
\DeclareMathOperator{\Conn}{Conn}
\DeclareMathOperator{\Stab}{Stab}
\DeclareMathOperator{\Norm}{Norm}
\DeclareMathOperator{\lcm}{lcm}
\DeclareMathOperator{\rank}{rank}
\title{Polytopality criteria for the mix of polytopes and maniplexes}
\author[1]{Gabe Cunningham}
\author[2]{Isabel Hubard}
\affil[1]{School of Computing and Data Science, Wentworth Institute of Technology, Boston, MA 02115}
\affil[2]{ Institute of Mathematics, National Autonomous University of Mexico (IM UNAM), 04510 Mexico City, Mexico}
\date{\today}
\begin{document}

\maketitle

\begin{abstract}
The \emph{mix} of two maniplexes is the minimal maniplex that covers both. This construction has many important applications, such as finding the smallest regular cover of a maniplex. If one of the maniplexes is an abstract polytope, a natural question to ask is whether the mix is also a polytope. We describe here a general criterion for the polytopality of the mix which generalizes several previously-known polytopality criteria.
\end{abstract}


\section{Introduction}

Abstract polytopes are an expansion of the combinatorial type of a convex polytope, allowing a wide variety of objects while still retaining a geometric flavor. An abstract polytope can be represented as a partially-ordered set of its faces, or as an edge-colored graph that describes how flags (maximal chains in the poset) are related. In the case of highly-symmetric polytopes, they can also be described by a presentation of their automorphism group with respect to a particular generating set. This flexibility of representation has helped the field grow in many different directions.

The representation as an edge-colored graph has gained prominence recently, and \emph{maniplexes} were introduced as a generalization of abstract polytopes \cite{maniplexes}. This has proven especially useful since there are many natural operations on polytopes that might not produce a polytope, but they almost always produce a maniplex. Conversely, most operations on polytopes can be naturally extended to maniplexes, and so the study of maniplexes is growing, even independent of polytopes.

A maniplex of rank $n$ is the flag graph of a polytope if and only if it satisfies the \emph{Path Intersection Property} \cite{poly-mani}, that whenever there is a path between flags $\Phi$ and $\Psi$ that uses colors in $\{0, 1, \ldots, j\}$ and a path that uses colors in $\{i, i+1, \ldots, n-1\}$, then there is a path that uses colors in $\{i, \ldots, j\}$. In practice though, we often build polytopes inductively, by assembling rank $n-1$ polytopes together into a rank $n$ polytope. Thus, it is useful to have a version of the Path Intersection Property that assumes that the pieces we are gluing together are themselves polytopes. We prove two such recursive versions of the Path Intersection Property in \cref{recusive-pip-facet,recursive-pip}. 

One operation on maniplexes that is particularly useful is their \emph{mix}, a natural generalization of the \emph{join} or \emph{parallel product} of maps on surfaces (see \cite{parallel-product}). The mix of two maniplexes is the smallest maniplex that naturally covers both -- in other words, it is the fiber product in the category of (rooted) maniplexes of a fixed rank. Mixing is useful for a variety of purposes, such as constructing the smallest regular (fully symmetric) cover of a given maniplex. See \cite{chiral-mix,var-gps,mixing-and-monodromy} for a variety of applications of the mix.

Naturally, if we view polytopes as maniplexes, we can mix them as well, but a complication arises: the mix of polytopes may not be a polytope. We thus seek simple criteria for when the mix of two polytopes -- or a polytope with a maniplex -- will be a polytope (see \cite[Problem 9]{k-orbit}). Several such results have appeared in the literature before, focused on the most symmetric polytopes: regular and chiral (for example, see \cite[Lemma 3.3]{chiral-mix} and \cite[Theorem 3.1]{chiral-covers}).
Building on the concept of variance groups (see \cite{var-gps}), introduced to measure how "different" two structures are with respect to their mix, we present a unified and general approach to determining when the mix of two maniplexes is itself a polytope.
Our framework unifies and extends many of the known results. 

The main results of this paper are \cref{thm:all-i-admissible-mix,cor:med-sec-trans-I,cor:2_0n-1,cor:2_0,cor:2_n-1}, which in particular apply to the mix of regular polytopes, or two-orbit polytopes in the same class. 
We examine several further consequences of \cref{thm:all-i-admissible-mix}, including a generalization of \cite{chiral-covers} in \cref{cor:chiral-like-src}. 

In addition to variance groups, one of the important tools we use is the notion of a maniplex $\calM$ being \emph{$\calT$-admissible}, where $\calT$ is a premaniplex (a quotient of a maniplex). 
This idea was introduced for maps in \cite{map-operations}, but has been relatively underused. 
In fact, we now have a clear way to think about it: to say that $\calM$ is $\calT$-admissible is to say that $\calT$ is a \emph{symmetry type graph} of $\calM$ \cite{stg}. 
In other words, there is a subgroup $\G$ of the automorphism group of $\calM$ such that $\calM / \G \cong \calT$. This way of thinking lets us bring the considerable toolbox of symmetry type graphs to bear, and it seems clear that $\calT$-admissibility will continue to be an important concept in the further study of maniplexes and polytopes.

The paper is organized as follows. 
In \cref{sec:background} we give the basic notions of polytopes and maniplexes and some connections between them, as well as symmetry notions. 
In particular, in \cref{sec:maniplexes} we give two recursive versions of the Path Intersection Property.
\cref{sec:mixing} describes the mix of two maniplexes and a version of the Path Intersection Property for the mix. \cref{sec:t-admissibility} provides further information on the mix and introduces the notion of $\calT$-admissibility for maniplexes. In \cref{sec:var-gps}, we define chirality groups and variance groups, and in \cref{sec:mix-i-admissible}, we prove our main result on the polytopality of the mix and explore several consequences. We conclude with some open problems for further study.

\section{Polytopes and maniplexes}
\label{sec:background}

\subsection{Polytopes}
Let us start by defining the basics of abstract polytopes, following \cite{arp}. A \emph{flagged poset of rank $n$} is a ranked poset $\calP$ with a unique minimal element of rank $-1$, a unique maximal element of rank $n$, and such that every maximal chain has exactly $n+2$ elements, one in each rank $\{-1, 0, \ldots, n\}$. A flagged poset is \emph{connected} if it is rank $1$ or less, or if its Hasse diagram is connected even after removal of the minimal and maximal element. If $F,G \in \calP$ with $F \leq G$, then the \emph{section} $G/F$ is the interval $[F,G] = \{H \mid F \leq H \leq G\}$. 

An \emph{(abstract) $n$-polytope} is a flagged poset $\calP$ of rank $n$ such that:
\begin{enumerate}
    \item Every section of $\calP$ is connected, and
    \item If $F \leq G$ with $\rank{G} = \rank{F} + 2$, then there are exactly two elements $H$ such that $F < H < G$.
\end{enumerate}

The second property is often referred to as the {\it diamond condition}.
The elements of an abstract polytope are called \emph{faces}, and a face of rank $i$ is also called an \emph{$i$-face}. Faces of rank $0$, $1$, and $n-1$ are called vertices, edges, and facets, respectively. Faces $F$ and $G$ such that $F \leq G$ are said to be \emph{incident}.

If $I \subseteq \{0, \ldots, n-1\}$, then a \emph{chain of type $I$} is a set of mutually incident faces, with one face of rank $i$ for each $i \in I$. The maximal chains of a polytope (with $I = \{0, \ldots, n-1\}$) are called \emph{flags}. 
By the diamond condition, for every $0 \leq i \leq n-1$ and every flag $\Phi$, there is a unique flag $\Phi^i$ that differs from $\Phi$ only in its $i$-face. We say that $\Phi^i$ is \emph{$i$-adjacent} to $\Phi$. More generally, $\Phi$ and $\Psi$ are \emph{adjacent} if they are $i$-adjacent for some $i$. We extend the exponential notation inductively by defining $\Phi^{i_1, \ldots, i_k}$ to be $(\Phi^{i_1, \ldots, i_{k-1}})^{i_k}$. Note that if $|i-j| > 1$, then $\Phi^{i,j} = \Phi^{j,i}$.

The sections of an abstract polytope are themselves abstract polytopes. Often, when we talk about the facets of an abstract polytope, we really have in mind the sections $F/F_{-1}$, where $F$ is a facet and $F_{-1}$ is the unique minimal face. The \emph{vertex-figure at $v$} is the section $F_n / v$, where $F_n$ is the maximal face. Note that if $\calP$ is an $n$-polytope, then its facets and vertex-figures are $(n-1)$-polytopes. A section of the form $F/v$ where $F$ is a facet and $v$ a vertex with $v \leq F$ is called a \emph{medial section} of $\calP$. Note that the medial sections of $\calP$ are precisely the facets of the vertex-figures, as well as the vertex-figures of the facets.

The \emph{flag graph} of $\calP$ is a graph whose vertices are the flags of $\calP$ and where flags that are $i$-adjacent are connected by an edge of color $i$. This is an $n$-regular graph, where each flag is incident to one edge of each color in $\{0, \ldots, n-1\}$. Furthermore, whenever $|i-j| > 1$, the edges of color $i$ and $j$ comprise a union of 4-cycles.

We will describe in \cref{sec:maniplexes} how to recover the poset from the flag graph. Thus, we will identify an abstract polytope with its flag graph, thinking about posets only when that is helpful.

\subsection{Maniplexes}
\label{sec:maniplexes}

Maniplexes were defined in \cite{maniplexes} as a generalization of (the flag graph of) polytopes. Formally, an \emph{$n$-maniplex} is a simple, connected, $n$-regular graph, with edges colored $\{0, \ldots, n-1\}$, such that every vertex is incident to an edge of each color, and such that the graph induced by the edges of colors $i$ and $j$ is a collection of 4-cycles if $|i-j| > 1$. The vertices of a maniplex are called \emph{flags}. Thus, the flag graph of an $n$-polytope is an $n$-maniplex.

We sometimes want to deal with quotients of a maniplex, and the resulting graph may no longer be simple. 
In particular, we may obtain \emph{multiple edges} (distinct edges with the same vertex sets), or \emph{semi-edges} (edges on a single vertex). Furthermore, the connected components that we get from keeping only the edges of colors $i$ and $j$ with $|i-j| > 1$ now only need to be quotients of $4$-cycles. 
We call these structures \emph{premaniplexes}. 
In this paper we will assume that all our premaniplexes are connected, though it is sometimes convenient to allow disconnected premaniplexes (as in \cite{voltage-ops}). 

The \emph{dual} of an $n$-premaniplex $\calM$ is the premaniplex $\calM^\delta$ we obtain by changing every edge color $i$ to $n-i-1$. Thus, the facets of $\calM^\delta$ correspond to the vertex-figures of $\calM$. 
Whenever we say that something follows from \emph{a dual argument}, we mean that applying the argument to $\calM^\delta$ gives the required result for $\calM$.

Whenever there is a path in a premaniplex $\calM$ between flags $\Phi$ and $\Psi$ that only uses colors in some set $S$, we will write $\Phi \ijpathSet{S} \Psi$. We will write $[i,j]$ for the set $\{i, i+1, \ldots, j\}$. 

We define a poset from an $n$-maniplex $\calM$ as follows. For $0 \leq i \leq n-1$, the set of $i$-faces consists of the connected components that remain after removing the edges of color $i$. If $F$ is an $i$-face and $G$ is a $j$-face, then $F \leq G$ if and only if $i \leq j$ and $F \cap G \neq \emptyset$. To finish the poset, we add a unique minimal element of rank $-1$ and maximal element of rank $n$. If we apply this operation to the flag graph of a polytope $\calP$, then we get $\calP$ back. More generally, we would like to know when a maniplex is the flag graph of a polytope. We say that a maniplex is \emph{polytopal} if it is isomorphic to the flag graph of a polytope. The following result characterizes which maniplexes are polytopal.

\begin{theorem}[{{\cite[Theorem 5.3]{poly-mani}}}]
\label{pip}
The maniplex $\calM$ is polytopal if and only if it satisfies the \emph{Path Intersection Property}: that whenever $\Phi \ijpath{0}{j} \Psi$ and $\Phi \ijpath{i}{n-1} \Psi$, then $\Phi \ijpath{i}{j} \Psi$, for every $0 \leq i < j \leq n-1$.
\end{theorem}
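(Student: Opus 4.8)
This statement is an equivalence whose two directions have rather different flavors; I would prove them separately. \emph{Polytopal implies the Path Intersection Property.} The plan is to first record a dictionary between colored paths and shared faces: for flags $\Phi,\Psi$ of an $n$-polytope and $0\le a\le b\le n-1$, one has $\Phi\ijpath{a}{b}\Psi$ if and only if $\Phi$ and $\Psi$ have the same $k$-face for every $k\notin\{a,\ldots,b\}$. The forward implication is immediate, since a move of color $k$ changes only the $k$-face. For the converse, if $\Phi$ and $\Psi$ agree on every face of rank outside $[a,b]$, then, writing $F$ and $G$ for their common faces of ranks $a-1$ and $b+1$ (taken to be $\hat 0$ and $\hat 1$ when $a=0$ or $b=n-1$), both restrict to flags of the section $G/F$; since sections of a polytope are polytopes and hence flag-connected, these two flags of $G/F$ are joined by a chain of adjacent flags, and the corresponding moves --- which use exactly the colors $a,\ldots,b$ --- lift to the desired path. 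Granting this, the Path Intersection Property follows at once: $\Phi\ijpath{0}{j}\Psi$ says $\Phi$ and $\Psi$ share every face of rank $>j$, while $\Phi\ijpath{i}{n-1}\Psi$ says they share every face of rank $<i$, so together they share every face of rank outside $[i,j]$, whence $\Phi\ijpath{i}{j}\Psi$.

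\emph{The Path Intersection Property implies polytopal.} Here lies the real work. Starting from an $n$-maniplex $\calM$ with the property, I form the poset $\calP$ as described just before the statement; for a flag $\Phi$ write $F_k(\Phi)$ for the $k$-face through $\Phi$, i.e. the connected component of $\Phi$ after deleting the color-$k$ edges, so that by definition $\Psi\in F_k(\Phi)$ exactly when there is a path from $\Phi$ to $\Psi$ that avoids color $k$. The engine of the argument is the family of identities, one for each flag $\Phi$ and each interval $[a,b]\subseteq\{0,\ldots,n-1\}$,
\[
\bigcap_{k\notin[a,b]}F_k(\Phi)\;=\;\{\,\Psi:\Phi\ijpath{a}{b}\Psi\,\}.
\]
The inclusion $\supseteq$ is immediate, since a path using only colors in $[a,b]$ cannot leave $F_k(\Phi)$ for $k\notin[a,b]$. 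For the reverse inclusion I would run a single induction producing all of these identities simultaneously; the step for a ``short'' interval $[a,b]$ runs as follows: membership of $\Psi$ in $F_k(\Phi)$ for all $k>b$ gives $\Phi\ijpath{0}{b}\Psi$ via the head-interval identity, membership for all $k<a$ gives $\Phi\ijpath{a}{n-1}\Psi$ via the tail-interval identity, and the Path Intersection Property then yields $\Phi\ijpath{a}{b}\Psi$. Two further points keep this honest. First, whenever colors $c$ and $d$ satisfy $|c-d|>1$ their edges form $4$-cycles, so two consecutive steps of such colors along a path may be swapped; iterating, a path whose colors split into two blocks with every color of one non-adjacent to every color of the other can be rearranged so that one block is used first and the other afterward. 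This is what lets one move between a path avoiding a single color and a pair of head/tail paths, and it is the only tool needed for several of the reductions. Second, the head-, tail-, single-color-, and general interval cases genuinely depend on one another, so the induction must be organized to deliver them all together rather than one family at a time.

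With the identities available, the remaining verifications are bookkeeping. Reflexivity and antisymmetry of $\le$ are clear, and transitivity needs only the swapping of non-adjacent colors: if $F\le G\le H$ with $\Phi\in F\cap G$ and $\Psi\in G\cap H$, split the path from $\Phi$ to $\Psi$ avoiding the color of $G$ into a piece using only colors above that of $G$ followed by a piece using only colors below it, and the intermediate flag lies in $F\cap H$. The case $[a,b]=\emptyset$ of the identities gives $\bigcap_k F_k(\Phi)=\{\Phi\}$, which makes $\Phi\mapsto(\hat 0<F_0(\Phi)<\cdots<F_{n-1}(\Phi)<\hat 1)$ a bijection from the flags of $\calM$ onto the flags of $\calP$ and identifies $\calM$ with the flag graph of $\calP$, the color-$k$ neighbour of $\Phi$ being $\Phi^k$. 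The diamond condition amounts to showing that the $i$-faces strictly between $F_{i-1}(\Phi)$ and $F_{i+1}(\Phi)$ are exactly $F_i(\Phi)$ and $F_i(\Phi^i)$, which one extracts from the identities for $[0,i]$, $[i,n-1]$, and $\{i\}$ together with the swapping trick; these are distinct since $F_i(\Phi)=F_i(\Phi^i)$ would force $\Phi^i\in\bigcap_k F_k(\Phi)=\{\Phi\}$. Finally, strong connectivity is read straight off the identities, because the faces of a section and their incidences are governed by colored paths inside a set $\{\Psi:\Phi\ijpath{a}{b}\Psi\}$, which is connected by construction.

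The single genuine obstacle, then, is the family of identities: the Path Intersection Property only crosses a head path with a tail path, so extracting everything else requires careful use of the $4$-cycle swaps, and the interdependence of the various shapes of interval makes it delicate to set the induction up so that it closes. A structurally different route would induct on the rank $n$, passing to the facets and vertex-figures --- which inherit the Path Intersection Property --- but that demands a recursive form of the criterion along the lines of \cref{recursive-pip}.
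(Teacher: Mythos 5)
The paper gives no proof of this theorem; it is imported verbatim from \cite{poly-mani}, so there is nothing internal to compare your argument against and I am judging it on its own. Your first direction is correct and complete: the dictionary asserting that $\Phi \ijpath{a}{b} \Psi$ holds exactly when $\Phi$ and $\Psi$ share every $k$-face with $k \notin [a,b]$, proved by restricting to the section between the common $(a-1)$-face and the common $(b+1)$-face and using strong flag-connectedness, is the right tool, and the Path Intersection Property then falls out in one line.

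The converse, however, is a plan rather than a proof, and the step you defer --- closing the simultaneous induction establishing $\bigcap_{k\notin[a,b]}F_k(\Phi)=\{\Psi:\Phi\ijpath{a}{b}\Psi\}$ for all intervals --- is where essentially all of the content of the cited theorem lives. The obstruction is sharper than delicate bookkeeping. Your reduction of a general interval to the head and tail identities plus one application of the stated property needs $a<b$, but the instances required to verify the polytope axioms are exactly the degenerate ones: the singleton $[a,a]$ (diamond condition) and the empty interval (a flag is determined by its faces); moreover your own inductive step for the head identity ends with flags $\Lambda,\Psi$ satisfying $\Lambda\ijpath{0}{b+1}\Psi$ and $\Lambda\ijpath{b+2}{n-1}\Psi$, from which you must conclude $\Lambda=\Psi$ --- again an empty-interval instance. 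None of these follows formally from the property as stated with $i<j$: for $n=3$ every instance with $i<j$ has one hypothesis equal to the vacuous condition $\Phi\ijpath{0}{2}\Psi$ and its conclusion equal to the other hypothesis, so the stated property holds for every $3$-maniplex, while non-polytopal $3$-maniplexes exist (e.g.\ the flag graph of the toroidal map $\{4,4\}_{(1,0)}$, where the intersection condition fails). So the property must be read with $i\le j$ (or with degenerate intervals admitted) for the theorem to be true at all, and your induction must then actually derive the singleton and empty cases from the nondegenerate ones together with the $4$-cycle commutations. You candidly flag this as the point where the argument is ``delicate to set up so that it closes''; until it is closed, the hard direction remains unproved.
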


In an $n$-premaniplex $\calM$, the edges of color $i$ induce a permutation of the flags $s_i$. 
The group $\langle s_0, \ldots, s_{n-1} \rangle$ is called the \emph{connection group} or \emph{monodromy group} of $\calM$, which we will denote $\Conn(\calM)$. 
The elements of $\Conn(\calM)$ will be called \emph{connections}. 
Note that $s_i^2 = 1$ for each $i$, and $(s_i s_j)^2 = 1$ whenever $|i-j| > 1$. Furthermore, $\calM$ is a maniplex if and only if each $s_i$ and $s_i s_j$ is fixed-point-free. 

We may take a slightly different viewpoint by considering the (infinite) Coxeter group
\[ W_n = \langle r_0, \ldots, r_{n-1} \mid r_i^2 = 1 \text{ for } 0 \leq i \leq n-1, (r_i r_j)^2 = 1 \text{ for } 0 \leq i < j-1 \leq n-2 \rangle. \]
Then there is a well-defined action of $W_n$ on the flags of an $n$-premaniplex $\calM$, given by $r_i \Phi = \Phi^i$. 
Thus, there is a group epimorphism from $W_n$ to $\Conn(\calM)$ sending $r_i$ to $s_i$. Note that $\Phi \ijpath{i}{j} \Psi$ if and only if $\Psi \in \langle r_i, \ldots, r_j \rangle \Phi = \langle s_i, \ldots, s_j \rangle \Phi$.

    Let us refine the Path Intersection Property (\cref{pip}) and consider the case where we know that the facets and/or vertex-figures are polytopal.

    \begin{proposition}
    \label{recusive-pip-facet}
    Let $\calM$ be an $n$-maniplex. Then $\calM$ is polytopal if and only if the facets are polytopal and, for every pair of flags $\Lambda$ and $\Psi$ and every $0 \leq i \leq n-2$, if $\Lambda \ijpath{0}{n-2} \Psi$ and $\Lambda \ijpath{i}{n-1} \Psi$, then $\Lambda \ijpath{i}{n-2} \Psi$. 
    \end{proposition}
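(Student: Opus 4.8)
The plan is to deduce this from the full Path Intersection Property (\cref{pip}) by showing that, under the stated hypotheses, the general $\Phi \ijpath{0}{j} \Psi$ and $\Phi \ijpath{i}{n-1} \Psi$ always imply $\Phi \ijpath{i}{j} \Psi$. The forward direction is essentially trivial: if $\calM$ is polytopal, then its facets (being sections, hence polytopes) are polytopal, and the displayed path-intersection condition is just the special case $j = n-2$ of \cref{pip}. So the content is the converse, and I would organize it by a case split on $j$.

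The easy case is $j = n-1$: then $\Phi \ijpath{i}{j} \Psi$ is the same as $\Phi \ijpath{i}{n-1}\Psi$, which is a hypothesis, so there is nothing to prove. Thus assume $j \le n-2$. Here I would use the polytopality of the facets. Recall that an $i$-face for $i$ in $[0,n-2]$ lives inside a single facet (a connected component of the color-$(n-1)$-deleted graph), and within that facet — which is the flag graph of an $(n-1)$-polytope — the Path Intersection Property holds for all colors in $[0,n-2]$. The subtlety is that the hypothesis only gives us $\Phi \ijpath{0}{j}\Psi$ and $\Phi \ijpath{i}{n-1}\Psi$; the latter uses color $n-1$ and so need not stay inside a facet. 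So the first step is to combine these two hypotheses via the displayed condition: since $j \le n-2$ we have $\Phi \ijpath{0}{n-2}\Psi$ (as $\Phi\ijpath{0}{j}\Psi$ and $[0,j]\subseteq[0,n-2]$) and $\Phi \ijpath{i}{n-1}\Psi$, so the displayed hypothesis yields $\Phi \ijpath{i}{n-2}\Psi$. Now both $\Phi \ijpath{0}{j}\Psi$ and $\Phi \ijpath{i}{n-2}\Psi$ use only colors in $[0,n-2]$, hence both paths lie in a single facet $\calF$ (the same one, since they share the endpoint $\Phi$). Applying the Path Intersection Property inside the polytopal facet $\calF$ to the colors $i < j$ (both at most $n-2$) gives $\Phi \ijpath{i}{j}\Psi$ within $\calF$, which is what we wanted.

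I would be careful about one routine point: that $\Phi \ijpath{0}{n-2}\Psi$ and $\Phi\ijpath{i}{n-2}\Psi$ really do force $\Phi$ and $\Psi$ into the same connected component after deleting color $n-1$, i.e. the same facet. This is immediate because a path avoiding color $n-1$ cannot leave a color-$(n-1)$-component, so $\Psi$ is in $\Phi$'s component; then "polytopal facets" means precisely that each such component, as an $(n-1)$-maniplex, satisfies \cref{pip}. A second point worth spelling out is why it suffices to check the displayed condition only for $j$ up to $n-2$ rather than also needing an analogous statement for the vertex-figure side: the asymmetry is real and is exactly why the dual version (with vertex-figures) would be a separate proposition. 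Here the color $0$ is always available in the $[0,j]$ path, which is what lets us push everything into a facet; we never need to push into a vertex-figure.

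The main obstacle is conceptual rather than computational: recognizing that the displayed hypothesis is doing the job of "intersecting down from $n-1$ to $n-2$," after which the remaining intersection (from $n-2$ down to $j$) is handled entirely inside a facet by the assumed polytopality there. Once that decomposition is seen, each step is a one-line verification. I would also double-check the degenerate ranges ($n$ small, or $i = n-2$, or $j = i+1$) but expect no surprises there.
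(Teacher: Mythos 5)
Your proof is correct and follows essentially the same route as the paper's: reduce to the case $j \le n-2$, use the displayed hypothesis to get $\Lambda \ijpath{i}{n-2} \Psi$, conclude that $\Lambda$ and $\Psi$ lie in a common facet, and finish by applying the Path Intersection Property inside that polytopal facet. Your extra care in noting that the final step invokes polytopality of the \emph{facet} (not of $\calM$) actually cleans up a slip in the paper's own wording.
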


    \begin{proof}
    If $\calM$ is polytopal, then clearly its facets are polytopal and it satisfies the other property by \cref{pip}.

    Conversely, suppose that $\calM$ satisfies the properties above. We want to show that $\calM$ satisfies the Path Intersection Property. Consider flags $\Lambda$ and $\Psi$ and $1 \leq i < j \leq n-1$ such that $\Lambda \ijpath{0}{j} \Psi$ and $\Lambda \ijpath{i}{n-1} \Psi$. We need to show that $\Lambda \ijpath{i}{j} \Psi$. If $j = n-1$, there is nothing to show. Otherwise, if $j \leq n-2$, then in particular $\Lambda \ijpath{0}{n-2} \Psi$, and so by assumption, $\Lambda \ijpath{i}{n-2} \Psi$. Thus $\Lambda$ and $\Psi$ belong to the same facet of $\calM$, and since $\calM$ is polytopal, the facts that $\Lambda \ijpath{i}{n-2} \Psi$ and $\Lambda \ijpath{0}{j} \Psi$ imply that $\Lambda \ijpath{i}{j} \Psi$.
    \end{proof}
    
    \begin{proposition} \label{recursive-pip}
    Let $\calM$ be an $n$-maniplex. Then $\calM$ is polytopal if and only if
    \begin{enumerate}
        \item The facets are polytopal,
        \item The vertex-figures are polytopal, and
        \item For every pair of flags $\Lambda$ and $\Psi$, if $\Lambda \ijpath{0}{n-2} \Psi$ and $\Lambda \ijpath{1}{n-1} \Psi$, then $\Lambda \ijpath{1}{n-2} \Psi$. 
    \end{enumerate}
    \end{proposition}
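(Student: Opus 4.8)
The plan is to deduce the statement from \cref{recusive-pip-facet}. The forward implication is immediate: the facets and vertex-figures of a polytope are themselves polytopes, hence polytopal as maniplexes, and (3) is the instance $i=1$, $j=n-2$ of \cref{pip}.

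For the converse, assume (1), (2) and (3). The facets of $\calM$ are polytopal by (1), so by \cref{recusive-pip-facet} it suffices to show that for all flags $\Lambda,\Psi$ and all $0\le i\le n-2$, if $\Lambda \ijpath{0}{n-2} \Psi$ and $\Lambda \ijpath{i}{n-1} \Psi$, then $\Lambda \ijpath{i}{n-2} \Psi$. The case $i=0$ is trivial, the conclusion being one of the hypotheses, and the case $i=1$ is exactly hypothesis (3). So the real content lies in the range $2\le i\le n-2$.

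Fix such an $i$. Since $i\ge 1$, the hypothesis $\Lambda \ijpath{i}{n-1} \Psi$ gives $\Lambda \ijpath{1}{n-1} \Psi$, so $\Lambda$ and $\Psi$ lie in a common vertex-figure $\mathcal{V}$ of $\calM$ (the connected component on the colors $\{1,\dots,n-1\}$), and $\mathcal{V}$ is polytopal by (2); moreover, combining $\Lambda \ijpath{1}{n-1} \Psi$ with $\Lambda \ijpath{0}{n-2} \Psi$, condition (3) yields $\Lambda \ijpath{1}{n-2} \Psi$. Relabelling the colors of $\mathcal{V}$ from $\{1,\dots,n-1\}$ to $\{0,\dots,n-2\}$ via $c\mapsto c-1$, we may treat $\mathcal{V}$ as a polytopal $(n-1)$-maniplex; the two paths $\Lambda \ijpath{1}{n-2} \Psi$ and $\Lambda \ijpath{i}{n-1} \Psi$ use only colors in $\{1,\dots,n-1\}$, hence lie in $\mathcal{V}$, and become paths of the form $\Lambda \ijpathSet{[0,n-3]} \Psi$ and $\Lambda \ijpathSet{[i-1,n-2]} \Psi$ there. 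Since $\mathcal{V}$ is polytopal, \cref{pip} applies to it: with the indices $i-1$ and $n-3$ (which satisfy $0\le i-1\le n-3$) it gives $\Lambda \ijpathSet{[i-1,n-3]} \Psi$ in $\mathcal{V}$, and translating back, $\Lambda \ijpath{i}{n-2} \Psi$ in $\calM$, as required.

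The only real friction is bookkeeping: keeping the relabelling of $\mathcal{V}$'s colors straight, and handling the boundary value $i=n-2$, where the indices $i-1$ and $n-3$ coincide and the application of \cref{pip} to $\mathcal{V}$ is the degenerate one, namely that in a polytopal maniplex of rank $m$, if $\Phi \ijpathSet{[0,k]} \Psi$ and $\Phi \ijpathSet{[k,m-1]} \Psi$ then $\Phi \ijpathSet{\{k\}} \Psi$. This still holds, since those two facts say that $\Phi$ and $\Psi$ agree on every face except possibly the $k$-face, so the diamond condition forces $\Psi\in\{\Phi,\Phi^{k}\}$; alternatively, the proof of \cref{recusive-pip-facet} only ever uses its displayed condition for $1\le i\le n-3$, so this endpoint need not be treated at all. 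The conceptual heart of the argument is just the step ``use (3) to push both flags into a common polytopal vertex-figure, then invoke \cref{pip} there.''
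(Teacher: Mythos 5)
Your proof is correct and follows essentially the same route as the paper: reduce to \cref{recusive-pip-facet}, use condition (3) to place both flags in a common polytopal vertex-figure, and then invoke the Path Intersection Property inside that vertex-figure. Your extra care with the color relabelling and the degenerate endpoint $i=n-2$ is sound but does not change the argument.
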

    
    \begin{proof}
    If $\calM$ is polytopal, then clearly it satisfies the first two properties, and it satisfies the third by \cref{pip}.
    
    Conversely, suppose that $\calM$ satisfies the properties above. It suffices to show that $\calM$ satisfies the conditions of \cref{recusive-pip-facet}. So, consider flags $\Lambda$ and $\Psi$ and $0 \leq i \leq n-2$ such that $\Lambda \ijpath{0}{n-2} \Psi$ and $\Lambda \ijpath{i}{n-1} \Psi$. We want to show that $\Lambda \ijpath{i}{n-2} \Psi$. If $i = 0$, there is nothing to show. Otherwise, if $i \geq 1$, then in particular $\Lambda \ijpath{1}{n-1} \Psi$. Then by assumption, $\Lambda \ijpath{1}{n-2} \Psi$. Since the vertex-figures are polytopes and thus satisfy the full Path Intersection Property, the facts that $\Lambda \ijpath{i}{n-1} \Psi$ and $\Lambda \ijpath{1}{n-2} \Psi$ imply that $\Lambda \ijpath{i}{n-2} \Psi$.
    \end{proof}

\subsection{Maniplex automorphisms and regularity}
\label{sec:reg-mans}

A function $\varphi: \calM \to \calM$ is a \emph{(premaniplex) automorphism} if is a bijection of the flags such that $\Phi = \Psi^i$ if and only if $\Phi \varphi = (\Psi \varphi)^i$. That is, it is a graph automorphism that also preserves the colors of the edges. In particular, the action of the automorphism group commutes with the action of the connection group (and $W_n$). We write automorphisms as acting on the right, while connections act on the left. The automorphism group of a premaniplex $\calM$ is denoted $\G(\calM)$. 

If $\G \leq \G(\calM)$, then the \emph{symmetry type graph of $\calM$ with respect to $\G$} is the quotient $\calM/\G$. That is, it is the graph whose vertices are $\G$-orbits of flags, and with two orbits connected with an edge labeled $i$ if there is a flag in the first orbit that is $i$-adjacent to a flag in the second orbit. In fact, since automorphisms commute with connections, whenever this occurs then every flag in the first orbit is $i$-adjacent to every flag in the second orbit. A symmetry type graph of a premaniplex is itself a premaniplex. When we take $\G = \G(\calM)$, then the resulting quotient is referred to as \emph{the} symmetry type graph of $\calM$. 
One of the many properties of $\calM$ that can be obtained from its symmetry type graphs is that of transitivity (see \cite{stg} for details).
$\calM$ is transitive on $i$-faces if and only if there is a symmetry type graph  of $\calM$ such that it remains connected when deleting all (semi)edges of color $i$.
Further, $\calM$ is transitive on chains of type $I$ if and only if there is a symmetry type graph  of $\calM$ such that it remains connected when deleting all (semi)edges of colors $i$, for all $i\in I$.

Many other properties of $\calM$ can be obtained from its symmetry type graph; see \cite{stg} for more details.

By connectivity, the automorphism group of a premaniplex acts semiregularly on the flags. Thus, if we fix a \emph{base flag} $\Phi$, we can identify an automorphism by where it sends $\Phi$. Different choices of $\Phi$ can lead to different natural generating sets and different presentations for the automorphism group, so when we wish to emphasize this connection, we write $\G(\calM, \Phi)$ to denote the automorphism group of $\calM$ with automorphisms named according to their action on $\Phi$. We may also write $(\calM, \Phi)$ to refer to $\calM$ if we want to emphasize the base flag, but we often prefer the streamlined notation of just using $\calM$.

We say that $\calM$ is a \emph{$k$-orbit premaniplex} when $\G(\calM)$ has $k$ orbits on the flags. Whenever $\G(\calM)$ acts transitively (and thus regularly) on the flags of $\calM$, the premaniplex is called \emph{regular} or \emph{reflexible}. If $\calM$ is a regular $n$-premaniplex with base flag $\Phi$, then for each $0 \leq i \leq n-1$, there is an automorphism $\rho_i$ such that $\Phi \rho_i = \Phi^i$. These automorphisms generate the group so that
\[ \G(\calM) = \langle \rho_0, \ldots, \rho_{n-1} \rangle. \]
Similar to $\Conn(\calM)$ and $W_n$, these generators satisfy $\rho_i^2 = id$ for each $i$, and $(\rho_i \rho_j)^2 = id$ if $|i-j| > 1$. In fact, the map $r_i \mapsto \rho_i$ defines an isomorphism between $\Conn(\calM)$ and $\G(\calM)$ for regular premaniplexes. Note that, for regular premaniplexes, the choice of base flag is essentially arbitrary; different choices will lead to groups with identical presentations.

A group $\langle x_0, \ldots, x_{n-1} \rangle$ that satisfies $x_i^2 = 1$ and $(x_i x_j)^2 = 1$ whenever $|i-j| > 1$ is called a \emph{string group generated by involutions} or \emph{sggi}. If $\G$ is an sggi, then the Cayley graph of $\G$ with respect to $\{x_0, \ldots, x_{n-1}\}$ is a regular premaniplex $\calM$ such that $\G(\calM) = \G$. In fact, this will be a maniplex if and only if each $x_i$ and each $x_i x_j$ (with $i \neq j$) is nontrivial. Since we can recover a regular premaniplex from its automorphism group, we often just work with sggis if we are interested in regular maniplexes or polytopes.

Consider the sggi $\G = \langle \rho_0, \ldots, \rho_{n-1} \rangle$. We say that $\G$ is a \emph{string C-group} if it satisfies the following \emph{intersection condition}:
\[ \forall I,J \subseteq \{0, \ldots, n-1\}, \langle \rho_i \mid i \in I \rangle \cap \langle \rho_j \mid j \in J \rangle = \langle \rho_k \mid k \in I \cap J \rangle. \]
The regular maniplex $\calM$ with automorphism group $\G$ is a polytope if and only if $\G$ is a string C-group.

Since polytopes are often defined inductively by first defining their facets and vertex-figures, the following result is frequently useful:

\begin{proposition}[{{\cite[Proposition 2E16(a)]{arp}}}]
\label{ind-string-c}
Suppose $\G = \langle \rho_0, \ldots, \rho_{n-1} \rangle$ is an sggi. Then $\G$ is a string C-group if and only if
\begin{enumerate}
    \item $\langle \rho_0, \ldots, \rho_{n-2} \rangle$ is a string C-group,
    \item $\langle \rho_1, \ldots, \rho_{n-1} \rangle$ is a string C-group, and
    \item $\langle \rho_0, \ldots, \rho_{n-2} \rangle \cap \langle \rho_1, \ldots, \rho_{n-1} \rangle = \langle \rho_1, \ldots, \rho_{n-2} \rangle$.
\end{enumerate}
\end{proposition}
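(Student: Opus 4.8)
The forward direction is immediate from the definition: if $\G$ is a string C-group then, restricting the intersection condition to subsets of $\{0,\dots,n-2\}$ (resp.\ of $\{1,\dots,n-1\}$), the subgroup $\langle\rho_0,\dots,\rho_{n-2}\rangle$ (resp.\ $\langle\rho_1,\dots,\rho_{n-1}\rangle$) inherits it, so (1) and (2) hold, and (3) is exactly the intersection condition for the single pair $I=\{0,\dots,n-2\}$, $J=\{1,\dots,n-1\}$ (whose intersection is $\{1,\dots,n-2\}$). So the plan is to establish the converse.

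For the converse I would pass to a regular maniplex and invoke \cref{recursive-pip}. Assuming $\G$ is non-degenerate (each $\rho_i$ and each $\rho_i\rho_j$ nontrivial; the degenerate cases reduce to lower rank), let $\calM$ be the Cayley graph of $\G$ on $\{\rho_0,\dots,\rho_{n-1}\}$, so $\calM$ is a regular $n$-maniplex with $\G(\calM)=\G$. Identifying flags of $\calM$ with elements of $\G$, two flags are joined by a path using only colors in a set $S$ precisely when they differ by an element of $\langle\rho_i : i\in S\rangle$; under this dictionary the facets and vertex-figures of $\calM$ are the regular maniplexes whose automorphism groups are $\langle\rho_0,\dots,\rho_{n-2}\rangle$ and $\langle\rho_1,\dots,\rho_{n-1}\rangle$. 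Hypotheses (1) and (2) then say exactly that the facets and vertex-figures of $\calM$ are polytopal (recalling that a regular maniplex is polytopal iff its automorphism group is a string C-group), and hypothesis (3), once the path conditions are rewritten as subgroup membership, is exactly the third condition of \cref{recursive-pip}. That proposition then yields that $\calM$ is polytopal, and hence that $\G$ is a string C-group.

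A second, self-contained route is induction on $n$: dispose of $n\le 2$ by hand, and for $n\ge 3$ use the inductive hypothesis to obtain the full intersection condition inside $\langle\rho_0,\dots,\rho_{n-2}\rangle$ and inside $\langle\rho_1,\dots,\rho_{n-1}\rangle$, then check $\langle\rho_i : i\in I\rangle\cap\langle\rho_j : j\in J\rangle\subseteq\langle\rho_k : k\in I\cap J\rangle$ for arbitrary $I,J$. The key device is that when $n-1\notin I$ one has $\langle\rho_i : i\in I\rangle\le\langle\rho_0,\dots,\rho_{n-2}\rangle$, so after using (3) in the form $\langle\rho_0,\dots,\rho_{n-2}\rangle\cap\langle\rho_1,\dots,\rho_{n-1}\rangle=\langle\rho_1,\dots,\rho_{n-2}\rangle$ to pull the index $n-1$ out of $J$, the whole intersection can be computed inside $\langle\rho_0,\dots,\rho_{n-2}\rangle$, with a symmetric move removing the index $0$. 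The step I expect to be the main obstacle is the bookkeeping: in the inductive route, tracking which of $0$ and $n-1$ lie in $I$, in $J$, in both, or in neither, and confirming each combination follows from the hypotheses; in the maniplex route, the (routine) verification that the path conditions translate correctly, together with the handling of degenerate sggis for which the Cayley graph is not a maniplex.
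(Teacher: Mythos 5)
The paper offers no proof of this statement: it is imported wholesale from \cite[Proposition 2E16(a)]{arp}, so there is nothing internal to compare against. Judged on its own terms, your first route is correct and is, in effect, a derivation of the ARP result from this paper's machinery: the forward direction is indeed just a restriction of the intersection condition, and for the converse the dictionary you set up (flags of the Cayley graph are group elements, $\Lambda \ijpathSet{S} \Psi$ iff $\Lambda$ and $\Psi$ differ by an element of $\langle \rho_i : i \in S\rangle$, facets and vertex-figures are the Cayley graphs of the two maximal parabolic subgroups) translates conditions (1)--(3) exactly into the three conditions of \cref{recursive-pip}, and the equivalence ``regular maniplex is polytopal iff its group is a string C-group'' closes the loop with no circularity, since neither that equivalence nor \cref{recursive-pip} depends on the present proposition. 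This is presumably why the paper says ``Compare to \cref{recursive-pip}'' immediately after the statement. The one loose end is the degenerate case: under the paper's definition an sggi may have $\rho_i$ trivial or $\rho_i = \rho_j$, in which case the Cayley graph is not a maniplex and the correspondence you invoke is unavailable; ``reduce to lower rank'' is not quite a proof there, though the cases are easy and are irrelevant to how the proposition is used in the paper.

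Your second route, as sketched, has a genuine gap that you correctly anticipated. The moves ``pull $n-1$ out of $J$'' and ``pull $0$ out of $I$'' only work when the relevant index set omits one of the extreme indices, so that the corresponding subgroup sits inside a maximal parabolic where (1), (2), (3) apply. When one of $I$, $J$ contains \emph{both} $0$ and $n-1$ (for instance $I \ni 0, n-1$ and $J \subseteq \{1, \ldots, n-2\}$), $\langle \rho_i : i \in I\rangle$ lies in neither $\langle\rho_0,\ldots,\rho_{n-2}\rangle$ nor $\langle\rho_1,\ldots,\rho_{n-1}\rangle$, and neither move is available. Closing this case requires a further reduction (e.g., first reduce to pairs with $|I| = n-1$, then a secondary induction that splits $\langle\rho_j : j \in J\rangle$ across a missing interior index into two elementwise-commuting blocks); that bookkeeping is the actual content of the proof in \cite{arp} and is not routine. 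So if you want a complete argument, commit to the first route and handle the degenerate sggis explicitly.
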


Said another way, if we want to determine whether $\calM$ is a polytope, and we already know that its facets and vertex-figures are polytopes, then we only need to check one part of the intersection condition. Compare to \cref{recursive-pip}. 

There is another convenient way to represent the automorphism group of a regular premaniplex. Suppose that $\calM$ is a regular premaniplex with base flag $\Phi$. Consider the action of $W_n$ on the flags of $\Phi$ as described in \cref{sec:maniplexes}, and let $N = \Stab_{W_n}(\Phi)$. Then $\G(\calM) \cong \Conn(\calM) = W_n/N$.

More generally, if $\calM$ is any $n$-premaniplex with base flag $\Phi$ and $N = \Stab_{W_n}(\Phi)$, then $\G(\calM, \Phi) \cong V/N$, where $V = \Norm_{W_n}(N)$. The isomorphism $\G(\calM, \Phi) \cong V/N$ can be understood as follows.
Given $\alpha\in \G(\calM, \Phi) $, one can write $\Phi\alpha = v\Phi$, with $v\in V$; and vice versa, if $v\in V$, then there exists $\alpha_v\in \G(\calM, \Phi) $ such that $v\Phi=\Phi\alpha_v$. Moreover, if $v,w\in V$ with $vw^{-1}\in N$, then $v\Phi=w\Phi$, so that $\alpha_v=\alpha_w$.
Note that if we pick a different base flag $\Phi'$ and set $N' = \Stab_{W_n}(\Phi')$, then $N'$ is conjugate to $N$ in $W_n$; say $N' = N^w$. Furthermore, if $V' = \Norm_{W_n}(N')$, then $V'$ is conjugated to $V$ in the same way: $V' = V^w$.

If $\calM$ and $\calN$ are $n$-premaniplexes, we say that $\calM$ \emph{covers} $\calN$ if there is a surjective function $\varphi: \calM \to \calN$ that preserves edge colors, and then we call $\varphi$ a \emph{covering}. Under our assumption that premaniplexes are connected, a covering is uniquely determined by the image of any single flag. Note that if $\calM$ has base flag $\Phi$ and $\calN$ has base flag $\Psi$, then $\calM$ covers $\calN$ if and only if $\Stab_{W_n}(\Phi) \leq \Stab_{W_n}(\Psi)$ (see Lemma 2.1 of \cite{sparse}).
Furthermore, we can ``identify'' $\Stab_{W_n}(\Phi)$ with $\calM$, in the sense that the Schreier graph of $W_n$ with respect to $\Stab_{W_n}(\Phi)$ and the set $\{r_0, r_1,\dots, r_{n-1}\}$ is precisely $\calM$ (see Lemma 2.2 in \cite{sparse}).

Of course, regular premaniplexes are the most symmetric ones. 
In particular, for every $I\subseteq\{0,\dots, n-1\}$, their automorphism group acts transitively on the set of chains of type $I$. 
Some classes of non-regular premaniplexes nevertheless have many transitivity properties.
If the automorphism group of an $n$-premaniplex $\calM$ acts transitively on chains of type $I$, we shall say that $\calM$ is $I$-chain-transitive; if $I=\{i\}$, we say that $\calM$ is $i$-face-transitive.
Moreover, if $\calM$ is $i$-face-transitive for all $i\in \{0,\dots, n-1\}$, then we say that $\calM$ is {\em fully transitive}.

When $\calM$ is medial-section-transitive (that is, $I$-chain-transitive for $I = \{1, 2, \ldots, n-2\}$), we can make a further refinement of \cref{recursive-pip}. 

    \begin{proposition} \label{recursive-pip2}
    Let $\calM$ be an $n$-maniplex with base flag $\Phi$, and suppose that $\calM$ is medial-section-transitive. Then $\calM$ is polytopal if and only if
    \begin{enumerate}
        \item The facets are polytopal,
        \item The vertex figures are polytopal, and
        \item For every flag $\Psi$, if $\Phi \ijpath{0}{n-2} \Psi$ and $\Phi \ijpath{1}{n-1} \Psi$, then $\Phi \ijpath{1}{n-2} \Psi$. 
    \end{enumerate}
    \end{proposition}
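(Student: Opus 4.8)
The plan is to deduce polytopality from \cref{recursive-pip}. The forward direction is immediate, since facets and vertex-figures of a polytope are again polytopes and the third condition is an instance of the Path Intersection Property (\cref{pip}). For the converse, assume (1)--(3), and write $P(\Lambda)$ for the statement ``for every flag $\Psi$, if $\Lambda \ijpath{0}{n-2} \Psi$ and $\Lambda \ijpath{1}{n-1} \Psi$, then $\Lambda \ijpath{1}{n-2} \Psi$''. Hypothesis (3) is exactly $P(\Phi)$, while \cref{recursive-pip} tells us it suffices to establish $P(\Lambda)$ for \emph{all} flags $\Lambda$.

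The engine of the argument is that $P$ need only be checked at finitely many flags. Two easy observations: $P(\Lambda)\Leftrightarrow P(\Lambda\alpha)$ for all $\alpha \in \G(\calM)$ (automorphisms preserve all path relations), and $P(\Lambda)\Leftrightarrow P(\Lambda')$ whenever $\Lambda \ijpath{1}{n-2} \Lambda'$ (such $\Lambda$ and $\Lambda'$ stand in the same three relations $\ijpath{0}{n-2}$, $\ijpath{1}{n-1}$, $\ijpath{1}{n-2}$ to every flag). Next, since $n \ge 3$ and $\calM$ is a maniplex, the colors $0$ and $n-1$ span only $4$-cycles, so the flags sharing the chain of type $\{1,\ldots,n-2\}$ of $\Phi$ are exactly $\Phi, \Phi^{0}, \Phi^{n-1}, \Phi^{0,n-1}$. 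As $\calM$ is medial-section-transitive, $\G(\calM)$ acts transitively on chains of type $\{1,\ldots,n-2\}$, so every flag of $\calM$ lies in the $\G(\calM)$-orbit of one of these four; by the two observations, it is then enough to prove $P$ at $\Phi, \Phi^{0}, \Phi^{n-1}, \Phi^{0,n-1}$.

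It remains to bootstrap from $P(\Phi)$ to the other three, and this is where (1) and (2) enter. Let $\calK$ be the vertex-figure of $\calM$ through $\Phi$ and $\calL$ the facet of $\calM$ through $\Phi$; these are polytopes by (2) and (1). Then $P(\Phi)$ says precisely that the flags of $\calK$ having the same $\calM$-facet as $\Phi$ constitute a single facet of $\calK$ (equivalently, the flags of $\calL$ having the same $\calM$-vertex as $\Phi$ constitute a single vertex of $\calL$). Since $\Phi$ and $\Phi^{n-1}$ lie in the common vertex-figure $\calK$ and their $\calM$-facets are adjacent in $\calM$, I would use the polytopality of $\calK$ and of the facets to transfer this ``single facet'' statement to the $\calM$-facet of $\Phi^{n-1}$, obtaining $P(\Phi^{n-1})$; symmetrically, working inside the common facet $\calL$ of $\Phi$ and $\Phi^{0}$ gives $P(\Phi^{0})$, and combining the two yields $P(\Phi^{0,n-1})$.

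The reduction to \cref{recursive-pip} and the two observations are routine. The step I expect to be the crux is the last one: transporting condition (3) around the chain of type $\{1,\ldots,n-2\}$ of $\Phi$. This is not automatic -- it must genuinely use that the facets and vertex-figures are polytopal in order to move a ``single face'' conclusion between adjacent faces of $\calM$. In the regular case all four flags lie in one orbit and the step is vacuous, and in general one can fall back on organizing the argument by the symmetry type graph of a medial-section-transitive maniplex, which has at most four vertices.
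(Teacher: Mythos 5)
Your two invariance observations are exactly right and are in fact the whole engine of the paper's argument: $P$ is constant on $\G(\calM)$-orbits, and $P(\Lambda)\Leftrightarrow P(\Lambda')$ whenever $\Lambda\ijpath{1}{n-2}\Lambda'$, because each of the three relations $\ijpath{0}{n-2}$, $\ijpath{1}{n-1}$, $\ijpath{1}{n-2}$ is unchanged by prepending a $[1,n-2]$-path. The gap is in how you then use medial-section-transitivity. In this paper that hypothesis means that the symmetry type graph stays connected after deleting the edges of colors $0$ and $n-1$; operationally, for every flag $\Lambda$ there is a flag $\Lambda'$ with $\Lambda'\ijpath{1}{n-2}\Lambda$ and $\Lambda'\alpha=\Phi$ for some $\alpha\in\G(\calM)$. (This is what is consistent with the rest of the section, e.g.\ the three-orbit examples $3^{1,2}$ and $3^2$ in \cref{fig:med-sect-trans}, which remain connected on colors $1,2$.) With that reading, your two observations finish the proof in one line: $P(\Phi)\Rightarrow P(\Lambda')\Rightarrow P(\Lambda)$ for every $\Lambda$, and \cref{recursive-pip} applies. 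That is precisely the paper's proof.

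Instead you read the hypothesis as transitivity of $\G(\calM)$ on chains of type $\{1,\ldots,n-2\}$ and reduced to the four flags $\Phi,\Phi^0,\Phi^{n-1},\Phi^{0,n-1}$. Two things go wrong there. First, the claim that the flags sharing the middle chain of $\Phi$ are exactly those four is a diamond-condition statement; for a maniplex that is not yet known to be polytopal it does not follow from the $4$-cycle condition on colors $0$ and $n-1$ (that only shows the four flags form a set closed under $s_0$ and $s_{n-1}$, not that no other flag shares the same middle faces). Second, and more seriously, the ``bootstrap'' from $P(\Phi)$ to $P(\Phi^0)$, $P(\Phi^{n-1})$, $P(\Phi^{0,n-1})$ --- which you yourself identify as the crux --- is only described as an intention (``I would use the polytopality of $\calK$ \dots to transfer this''), not proved. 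Note that $\Phi$ and $\Phi^{n-1}$ are generally \emph{not} related by a $[1,n-2]$-path and need not be in the same orbit, so neither of your two observations applies to them directly; under the correct reading of the hypothesis the transfer is automatic (every flag, including $\Phi^{n-1}$, is in the orbit of something $[1,n-2]$-connected to $\Phi$), but under your reading it is exactly the unproven content of the proposition. As written, the argument does not close.
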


    \begin{proof}
    The conditions of \cref{recursive-pip} imply these conditions, so it remains to prove the reverse implication. Let $\Lambda$ and $\Psi$ be arbitrary flags such that $\Lambda \ijpath{0}{n-2} \Psi$ and $\Lambda \ijpath{1}{n-1} \Psi$. Since $\calM$ is medial-section-transitive, there must be a flag $\Lambda'$ such that $\Lambda' \ijpath{1}{n-2} \Lambda$ and $\Lambda' \alpha = \Phi$ for some automorphism $\alpha$. Then 
    \[ \Lambda' \ijpath{1}{n-2} \Lambda \ijpath{0}{n-2} \Psi, \]
    and so $\Lambda' \ijpath{0}{n-2} \Psi$. Similarly, $\Lambda' \ijpath{1}{n-1} \Psi$. Since automorphisms commute with connections, acting on $\Lambda'$ and $\Psi$ by $\alpha$ gives us that $\Phi \ijpath{0}{n-2} \Psi \alpha$ and $\Phi \ijpath{1}{n-1} \Psi \alpha$, and so by hypothesis, $\Phi \ijpath{1}{n-2} \Psi \alpha$. Now acting by $\alpha^{-1}$ gives us $\Lambda' \ijpath{1}{n-2} \Psi$, and since $\Lambda \ijpath{1}{n-2} \Lambda'$, it follows that $\Lambda \ijpath{1}{n-2} \Psi$.
    \end{proof}

\subsection{Chiral and two-orbit maniplexes}
\label{sec:2-orbit}

    After regular premaniplexes, the next most symmetric structures are those where the automorphism group has two orbits on the flags; these are called \emph{two-orbit premaniplexes}. For each two-orbit premaniplex $\calM$, there is an $I \subset \{0, \ldots, n-1\}$ such that every flag $\Phi$ is in the same orbit as $\Phi^i$ for every $i \in I$; we then say that $\calM$ is in \emph{class $2_I$} (or $2_I^n$ if we also wish to emphasize the rank). Note that $I$ is a proper subset of $\{0, \ldots, n-1\}$, since if $\Phi^i$ is in the same orbit as $\Phi$ for every $i$, then the premaniplex is regular.

    We define $\textbf{2}_I^n$ to be the premaniplex consisting of two flags joined by edges of colors $\{0, \ldots, n-1\} \setminus I$, and with semi-edges of color $i$ at each node for each $i \in I$. If $\calM$ is in class $2_I^n$, then its symmetry type graph is precisely $\textbf{2}_I^n$.

    An $n$-premaniplex is \emph{chiral} if it is in class $2_{\emptyset}^n$, which we will also denote simply as $2^n$. Chiral premaniplexes and polytopes have been extensively studied in the past 20 years, and many tools have been developed to study them. Some of these tools, such as chirality groups (defined in \cref{sec:var-gps}) are easily adapted for use on other classes of two-orbit premaniplexes. 

    Similar to the case with regular premaniplexes, we do not take great care to distinguish between different base flags in the same orbit. However, which of the two orbits the base flag belongs to is important. If $\calM$ is a two-orbit premaniplex with base flag $\Phi$, then we use $\overline{\calM}$ to denote $\calM$ but with a base flag chosen in the orbit opposite of $\Phi$. It is convenient to extend this notation to regular premaniplexes by defining $\overline{\calM} = \calM$; one could think of $\overline{\calM}$ as having a base flag that is adjacent to the base flag of $\calM$, but for regular premaniplexes these two choices are equivalent.

    Let $\calM$ be an $n$-premaniplex in class $2^n_I$. 
    Then, the quotient of $\calM$ by $\G(\calM)$ is precisely $\textbf{2}_I^n$.
    Moreover, following \cite{isa2orbit}, one can then see that, if $|I|<n-1$, then  $\calM$ is fully transitive; 
    if $I=\{0,1,\dots, n-1\} \setminus \{j\}$, (so that $|I|=n-1$), then $\calM$ is  $i$-face-transitive, for all $i\neq j$.

    A premaniplex is called \emph{orientable} if it is bipartite. Equivalently, an $n$-premaniplex is orientable if and only if it covers $\textbf{2}^n$. More generally, we will say that $\calM$ is \emph{$I$-orientable} if it covers $\textbf{2}_I^n$, which then induces a bicoloring of the flags that is compatible with $\textbf{2}_I^n$. See also \cite{flag-bicolorings}, where the term \emph{$I$-colorable} is used rather than $I$-orientable. A premaniplex $\calM$ that is not $I$-orientable has a minimal $I$-orientable cover, called the \emph{$I$-double} of $\calM$, with twice as many flags as $\calM$ (see \cite{flag-bicolorings}). 

    A premaniplex is \emph{rotary} if it is either regular or chiral. We will call a premaniplex \emph{$I$-rotary} if it is either regular or in class $2_I$. A premaniplex that is both $I$-rotary and $I$-orientable will be called \emph{$I$-admissible}. 
    Thus, if $\calM$ is $I$-admissible, then it is either in class $2_I$ or it is regular and $I$-orientable.

    If $\calM$ is $I$-orientable, and $\alpha \in \G(\calM)$, then consider the action on the base flag:
    \[ \Phi \alpha = w \Phi = r_{i_1} \cdots r_{i_k} \Phi. \]
    If $|\{ j : i_j \not \in I \}|$ is even, then we will say that $\alpha$
    is \emph{$I$-even}; otherwise it is \emph{$I$-odd} -- this is well-defined precisely when $\calM$ is $I$-orientable. We define $\G^I(\calM)$ to be those automorphisms of $\calM$ that are $I$-even. When $I = \emptyset$, this is typically denoted $\G^+(\calM)$ instead.
    Note that since $\calM$ is $I$-orientable, its flags can be colored with two colors such that every closed walk traverses an even number of edges with labels not in $I$, and thus $\G^I(\calM)$ consists of all the automorphisms of $\calM$ that preserve this coloring. 
    In particular this means that if $\calM$ is a two-orbit premaniplex in class $2_I$, then $\G(\calM)=\G^I(\calM)$. We also define $\G^I(\calM) = \G(\calM)$ whenever $\calM$ is not $I$-orientable.

    For each proper subset $I \subset \{0, \ldots, n-1\}$, we define the following subgroup of $W = \langle r_0, \ldots, r_{n-1} \rangle$:
    \[ W_n^I = \langle \{r_i\}_{i \in I}, \{r_i r_j\}_{i,j \not \in I}, \{r_i r_j r_i\}_{j = i \pm 1 \in I, i \not \in I} \rangle. \]
    Then $W_n^I$ is an index 2 subgroup of $W_n$. Furthermore, $\alpha$ is $I$-even if and only if $\Phi \alpha = w \Phi$ with $w \in W_n^I$. 
    
    Suppose $\calM$ is an $n$-premaniplex with base flag $\Phi$, and let $N = \Stab_{W_n}(\Phi)$. If $\calM$ is $I$-admissible, 
    then $N \lhd W_n^I$, and then $\G^I(\calM) = W_n^I / N$. 
    When $\calM$ is also regular, $\G^I(\calM)$ is an index 2 subgroup of $\G(\calM)$. As with regular premaniplexes, it is possible to build an $I$-admissible premaniplex $\calM$ from a group $\G = W_n^I / N$ in such a way that $\G^I(\calM) = \G$, so that in principle, one can work with $I$-admissible premaniplexes entirely in terms of their automorphism groups.

    For each $I \subset \{0, \ldots, n-1\}$ we define the following subsets of $\{0, \ldots, n-2\}$: 
    \begin{eqnarray}
        I_{n-1} &=& I \setminus \{n-1\},  \nonumber \\
        I_0 &=& \{i -1 \mid i \geq 1, i \in I\}.\nonumber
    \end{eqnarray}
    That is, $I_0$ removes $0$ if it is present, and then shifts everything down by 1. These new subsets help us describe the following well-known facts about the facets and vertex-figures of $I$-orientable and $I$-rotary premaniplexes.

    \begin{proposition}
    \label{prop:i-or-facets}
    If $\calM$ is an $n$-premaniplex that is $I$-orientable (resp. $I$-rotary, $I$-admissible), then its facets are $I_{n-1}$-orientable (resp. $I_{n-1}$-rotary, $I_{n-1}$-admissible) and its vertex-figures are $I_0$-orientable (resp. $I_0$-rotary, $I_0$-admissible).
    \end{proposition}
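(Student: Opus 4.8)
The plan is to translate all three properties into statements about the subgroup $N=\Stab_{W_n}(\Phi)$ of the Coxeter group $W_n$, where $\Phi$ is a chosen base flag of $\calM$, and then to compare $N$ with the index-$2$ subgroup $W_n^I$; the argument for facets and for vertex-figures is the same up to relabeling, so I would treat them in parallel. First I would record that the facet $\mathcal{F}$ of $\calM$ through $\Phi$ is the connected component of $\Phi$ in the subgraph of colors $\{0,\dots,n-2\}$, so that, as an $(n-1)$-premaniplex on the colors $\{0,\dots,n-2\}$, its base-flag stabilizer inside $W_{n-1}=\langle r_0,\dots,r_{n-2}\rangle\leq W_n$ is $N\cap W_{n-1}$; likewise the vertex-figure $\mathcal{V}$ is the component of $\Phi$ in the subgraph of colors $\{1,\dots,n-1\}$, and under the relabeling isomorphism $\phi\colon W_{n-1}\to W'$, $r_i\mapsto r_{i+1}$, where $W'=\langle r_1,\dots,r_{n-1}\rangle$, its base-flag stabilizer is $\phi^{-1}(N\cap W')$. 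Next I would observe that the $W_m$-stabilizer of a base flag of $\textbf{2}_L^m$ is exactly $W_m^L$ (a word fixes that flag precisely when it uses an even number of letters whose colors lie outside $L$), so by the covering criterion recalled in \cref{sec:reg-mans} an $(n-1)$-premaniplex with base-flag stabilizer $N'$ is $L$-orientable iff $N'\leq W_{n-1}^L$, is regular iff $N'\lhd W_{n-1}$, and is $L$-admissible iff $N'\lhd W_{n-1}^L$; and it is $L$-rotary iff it is regular or in class $2_L$, the latter forcing $L$-admissibility since such a premaniplex is $L$-rotary by definition and covers its symmetry type graph $\textbf{2}_L^{n-1}$.

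The heart of the matter is the pair of identities $W_n^I\cap W_{n-1}=W_{n-1}^{I_{n-1}}$ and $\phi^{-1}\bigl(W_n^I\cap W'\bigr)=W_{n-1}^{I_0}$. I would prove both at once via the sign homomorphism $\varepsilon\colon W_n\to\ZZ/2\ZZ$ with $\varepsilon(r_i)=0$ for $i\in I$ and $\varepsilon(r_i)=1$ for $i\notin I$: this is well-defined because it respects the relations $r_i^2=1$ and $(r_ir_j)^2=1$, and $W_n^I=\ker\varepsilon$ because $W_n^I$ is an index-$2$ subgroup each of whose defining generators lies in $\ker\varepsilon$. Restricting $\varepsilon$ to $W_{n-1}$ gives exactly the analogous sign homomorphism for $I_{n-1}$, since $i\in I\iff i\in I_{n-1}$ for $i\leq n-2$; and $\varepsilon\circ\phi$ gives exactly the one for $I_0$, since $i\in I_0\iff i+1\in I$ for $0\leq i\leq n-2$. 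Taking kernels yields the two identities. (When $I\supseteq\{0,\dots,n-2\}$, so that $I_{n-1}=\{0,\dots,n-2\}$, or $I\supseteq\{1,\dots,n-1\}$, so that $I_0=\{0,\dots,n-2\}$, the target class for facets or vertex-figures degenerates; such cases can be handled directly and I would note them only in passing.)

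Finally I would assemble the cases. If $\calM$ is $I$-orientable then $N\leq W_n^I$, so $N\cap W_{n-1}\leq W_n^I\cap W_{n-1}=W_{n-1}^{I_{n-1}}$ and similarly $\phi^{-1}(N\cap W')\leq W_{n-1}^{I_0}$, giving $I_{n-1}$-orientability of $\mathcal{F}$ and $I_0$-orientability of $\mathcal{V}$. If $\calM$ is $I$-admissible then $N\lhd W_n^I$; since $N\leq W_n^I$ we have $N\cap W_{n-1}=N\cap W_{n-1}^{I_{n-1}}$, and the intersection of a normal subgroup with a subgroup is normal in that subgroup, so $N\cap W_{n-1}\lhd W_{n-1}^{I_{n-1}}$, and symmetrically for $\mathcal{V}$. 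If $\calM$ is $I$-rotary then either it is regular, whence $N\lhd W_n$ forces $N\cap W_{n-1}\lhd W_{n-1}$ and $\phi^{-1}(N\cap W')\lhd W_{n-1}$, making $\mathcal{F},\mathcal{V}$ regular (hence rotary); or it is in class $2_I$, hence $I$-admissible by the first paragraph, so $\mathcal{F},\mathcal{V}$ are $I_{n-1}$- resp. $I_0$-admissible and in particular rotary. I do not expect a deep obstacle: the real content is the one-line sign-homomorphism computation of the middle paragraph, and what needs care is the bookkeeping — keeping the shift $r_i\mapsto r_{i+1}$ straight when passing to vertex-figures, and disposing of the few degenerate subsets $I$.
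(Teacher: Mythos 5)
Your proof is correct. The paper states \cref{prop:i-or-facets} as a well-known fact and offers no proof of its own, so there is nothing to compare against; your group-theoretic argument --- identifying the flag stabilizer of the base facet (resp.\ vertex-figure) with $N\cap W_{n-1}$ (resp.\ $\phi^{-1}(N\cap W')$), characterizing the three properties by the position of that stabilizer relative to $W_{n-1}^{L}$, and computing $W_n^I\cap W_{n-1}=W_{n-1}^{I_{n-1}}$ via the sign homomorphism --- is sound and consistent with the machinery set up in \cref{sec:reg-mans,sec:2-orbit}. Two small points are worth tightening. First, the proposition concerns \emph{all} facets, not only the one through the base flag $\Phi$; this is covered because $W_n^I$ is normal in $W_n$, so replacing $N$ by a conjugate $N^w$ (the stabilizer of another flag) preserves every hypothesis you use, but you should say so. Second, the degenerate cases you defer ($I=\{0,\dots,n-2\}$ for facets, dually $I=\{1,\dots,n-1\}$ for vertex-figures) deserve a sentence rather than a parenthesis: there $\varepsilon$ restricts to the trivial homomorphism on $W_{n-1}$, so $W_n^I\cap W_{n-1}=W_{n-1}$, and your admissibility computation then gives $N\cap W_{n-1}\lhd W_{n-1}$, i.e.\ the facets are regular --- which matches the paper's later remark that premaniplexes in class $2_{\{0,1,\dots,n-2\}}^n$ have regular facets.
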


    The $(n-2)$-faces of a chiral $n$-premaniplex must be regular. Some other classes of two-orbit premaniplexes exhibit this same phenomenon.

    \begin{proposition}
    \label{prop:n-2-reg}
    If $\calM$ is an $n$-premaniplex in class $2_I$ and $n-1 \not \in I$, then the $(n-2)$-faces of $\calM$ are regular.
    \end{proposition}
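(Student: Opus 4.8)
The plan is to work with the automorphism group of $\calM$ and to show directly that, for every $(n-2)$-face $F$, the stabilizer $\Stab_{\G(\calM)}(F)$ already acts flag-transitively on the associated rank-$(n-2)$ premaniplex, the section $\mathcal{S}=F/F_{-1}$. First I would set up $\mathcal{S}$ carefully. The flags of $\calM$ lying in $F$ are exactly those in one $\langle s_0,\dots,s_{n-3},s_{n-1}\rangle$-orbit, and the edges of color $n-1$ partition them into pairs $\{\Phi,\Phi^{n-1}\}$: here I use that $n-1\notin I$ to know that $s_{n-1}$ is fixed-point-free on $\calM$ (an $(n-1)$-semi-edge would force $n-1\in I$), so every such pair has two distinct elements. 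These pairs are the flags of $\mathcal{S}$, with the color-$i$ adjacency of $\mathcal{S}$ for $0\le i\le n-3$ induced from $\calM$; this descends correctly because $|i-(n-1)|\ge 2$, so $s_i$ and $s_{n-1}$ commute.

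The heart of the argument is the following. Since $n-1\notin I$, the flags $\Phi$ and $\Phi^{n-1}$ lie in different orbits of $\G(\calM)$, so each pair $\{\Phi,\Phi^{n-1}\}$ contains exactly one flag of a fixed orbit $\mathcal{W}$; hence $\Phi\mapsto\{\Phi,\Phi^{n-1}\}$ is a bijection from the flags of $\mathcal{W}$ contained in $F$ onto the flags of $\mathcal{S}$. I then claim $\Stab_{\G(\calM)}(F)$ acts transitively on the flags of $\mathcal{W}$ contained in $F$: if $\Phi,\Phi'$ are two of them, transitivity of $\G(\calM)$ on $\mathcal{W}$ gives $\alpha\in\G(\calM)$ with $\Phi\alpha=\Phi'$, and $\alpha$ carries the $(n-2)$-face of $\Phi$ to the $(n-2)$-face of $\Phi'$, both equal to $F$, so $\alpha\in\Stab_{\G(\calM)}(F)$. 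Each such $\alpha$ preserves the pairs $\{\Phi,\Phi^{n-1}\}$ and commutes with $s_0,\dots,s_{n-3}$, hence descends to an automorphism $\bar\alpha$ of $\mathcal{S}$, and the displayed bijection is equivariant for the $\Stab_{\G(\calM)}(F)$-action on one side and the $\bar\alpha$-action on the other. Transitivity on the flags of $\mathcal{W}$ in $F$ then forces $\{\bar\alpha : \alpha\in\Stab_{\G(\calM)}(F)\}\le\G(\mathcal{S})$ to be transitive on the flags of $\mathcal{S}$, so $\mathcal{S}$ is regular. (When $I=\emptyset$ this recovers the familiar fact for chiral premaniplexes.)

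The only genuinely delicate point is the bookkeeping at the interface between $\calM$ and $\mathcal{S}$: identifying the flags of $\mathcal{S}$ with the color-$(n-1)$ pairs of flags of $\calM$ in $F$, checking that $\G(\calM)$-orbits and the connections $s_0,\dots,s_{n-3}$ restrict and descend compatibly, and confirming that a face-stabilizing automorphism really induces an automorphism of $\mathcal{S}$. The two-orbit structure does the rest, and the hypothesis $n-1\notin I$ is used in exactly two places — ruling out $(n-1)$-semi-edges so that the color-$(n-1)$ classes have size two, and guaranteeing that $\Phi$ and $\Phi^{n-1}$ lie in opposite flag-orbits so that those classes split evenly between them.
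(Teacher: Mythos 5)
Your proof is correct and follows essentially the same route as the paper: both arguments rest on the observation that each flag of the section $F/F_{-1}$ lifts to a pair $\{\Phi,\Phi^{n-1}\}$ of flags of $\calM$ lying in opposite orbits (precisely because $n-1\notin I$), so that transitivity of $\G(\calM)$ on a single flag-orbit, restricted to $\Stab_{\G(\calM)}(F)$, yields flag-transitivity on the section. The only difference is cosmetic: you fix canonical representatives in one orbit $\mathcal{W}$ and set up an equivariant bijection, whereas the paper splits into two cases and replaces $\tilde{\Psi}$ by $\tilde{\Psi}^{n-1}$ when the lifts land in different orbits.
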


    \begin{proof}
    Consider an $(n-2)$-face $F$ and flags $\Phi$ and $\Psi$ in that face. We want to show that there is an automorphism sending $\Phi$ to $\Psi$. These flags naturally extend to flags $\tilde{\Phi}$ and $\tilde{\Psi}$ of $\calM$. If these flags are in the same orbit in $\calM$, say $\tilde{\Psi} = \tilde{\Phi} \gamma$, then the restriction of $\gamma$ to $F$ will send $\Phi$ to $\Psi$. Otherwise, by assumption there is some $j \not \in I$, $j \neq n-1$. Then $\tilde{\Psi}^{n-1}$ is in the same orbit as $\tilde{\Phi}$, say $\tilde{\Psi}^{n-1} = \tilde{\Phi} \gamma$. Then since $\tilde{\Psi}^{n-1}$ has the same $(n-2)$-face $F$ as $\Psi$, the restriction of $\gamma$ to $F$ will again send $\Phi$ to $\Psi$. 
    \end{proof}

 Note that the only class of two-orbit premaniplexes that are not facet-transitive are those in class $2_{\{0,1,\dots, n-2\}}^n$. 
 For premaniplexes in that class, their facets are in fact regular (of potentially two different isomorphism types). 
 Dually, premaniplexes in class $2_{\{1,2,\dots, n-1\}}^n$ have regular vertex figures.

 Note that if a premaniplex $\calM$ is facet-transitive and its facets are vertex-transitive, then $\calM$ is medial-section-transitive.
 Dually, if $\calM$ is vertex-transitive and its vertex figures are facet-transitive, then $\calM$ is medial-section-transitive.
 Hence, an $n$-premaniplex in class $2_I^n$ is medial-section-transitive whenever:
 \begin{itemize}
     \item  $|I|<n-2$,
     \item $|I|=n-2$ but $I \neq \{1,2,\dots, n-2\}$,
     \item $|I|=n-1$ with $\bar{I}\neq \{0\}$ or $\bar{I}\neq \{n-1\}$. 
 \end{itemize}

 For each $I\subset\{0,\dots, n-1\}$ let us now define $I_{0,n-1}$ as follows
 $$
I_{0,n-1} = \{i-1 \mid 1\leq i \leq n-2, i\in I\}.
 $$
Note that $I_{0,n-1}= (I_0)_{n-2}=(I_{n-1})_0$.

Using this notation, an $I$-rotary $n$-premaniplex is medial-section-transitive if and only if $I_{0,n-1}\neq \{0,2,\dots, n-3\}$.

 If $\calM$ is facet-transitive with base facet $\calK$, then the base facet of $\overline{\calM}$ is $\overline{\calK}$. On the other hand, if $\calM$ has two orbits of facets, then the base facet of $\calM$ is a representative from one orbit, while the base facet of $\overline{\calM}$ is a representative from the other orbit. Analogous results hold for the vertex-figures and medial sections of $\overline{\calM}$. 

\section{Mixing maniplexes}
\label{sec:mixing}

    Given two maniplexes, it is often useful to find the smallest maniplex that covers both of them. For example, every maniplex $\calM$ has a minimal self-dual cover that can be obtained by finding the smallest maniplex that covers $\calM$ and its dual. Such a minimal common cover can be constructed using the \emph{mix} of two maniplexes. Let us describe the fundamentals here; more details can be found in \cite[Sec. 5]{mixing-and-monodromy} and \cite[Sec. 3.2]{k-orbit}. (Note that although both of those references discuss the mix of \emph{polytopes}, the proofs usually do not use polytopality in any way, and so the results remain true for maniplexes.)

    First, let us consider two regular $n$-premaniplexes $\calM_1$ and $\calM_2$. Their automorphism groups are sggis; say $\G(\calM_1) = \langle \rho_0, \ldots, \rho_{n-1} \rangle$ and $\G(\calM_2) = \langle \rho_0', \ldots, \rho_{n-1}' \rangle$. Then the \emph{parallel product} or \emph{mix} of those groups, denoted $\G(\calM_1) \mix \G(\calM_2)$, is the subgroup of $\G(\calM_1) \times \G(\calM_2)$ generated by the elements of the form $(\rho_i, \rho_i')$ for each $i$ \cite{parallel-product}. That is,
    \[ \G(\calM_1) \mix \G(\calM_2) = \langle (\rho_0, \rho_0'), \ldots, (\rho_{n-1}, \rho_{n-1}') \rangle. \]
    Note that if $\G(\calM_1)$ and $\G(\calM_2)$ have identical relations, then $\G(\calM_1) \mix \G(\calM_2) \cong \G(\calM_1)$. In fact, if $\calM_1$ covers $\calM_2$, then $\calM_1 \mix \calM_2 \cong \calM_1$. At the other extreme, if $\G(\calM_1)$ and $\G(\calM_2)$ are ``different enough'', then it can happen that $\G(\calM_1) \mix \G(\calM_2) = \G(\calM_1) \times \G(\calM_2)$.

    Writing $\alpha_i$ for $(\rho_i, \rho_i')$, note that if the order of $\rho_{i_1} \cdots \rho_{i_t}$ is $k$ and the order of $\rho_{i_1}' \cdots \rho_{i_t}'$ is $m$, then the order of $\alpha_{i_1} \cdots \alpha_{i_t}$ is $\lcm(m,k)$. 
    In particular, this implies that the mix of two sggis is itself a sggi. 
    So we can build a regular premaniplex from $\G(\calM_1) \mix \G(\calM_2)$ using the correspondence mentioned in \cref{sec:reg-mans}, and we call this premaniplex $\calM_1 \mix \calM_2$, the \emph{mix} of $\calM_1$ and $\calM_2$. 
    Thus, by construction, $\G(\calM_1 \mix \calM_2) \cong \G(\calM_1) \mix \G(\calM_2)$.

    It is also possible to define the mix of two general premaniplexes with no symmetry conditions, in a way that is consistent with the above. Now, however, we must choose a base flag in each premaniplex. Given two rooted $n$-premaniplexes $(\calM_1, \Phi_1)$ and $(\calM_2, \Phi_2)$, consider the graph with vertex set $V(\calM_1) \times V(\calM_2)$, and where $(\Psi_1, \Psi_2)^i = (\Psi_1^i, \Psi_2^i)$. This graph may not be connected, so we define their mix $(\calM_1, \Phi_1) \mix (\calM_2, \Phi_2)$ to be the connected component that contains $(\Phi_1, \Phi_2)$.

    We note the following simple result.

    \begin{proposition} \label{mix-with-pre}
    The mix of a maniplex with a premaniplex is a maniplex.
    \end{proposition}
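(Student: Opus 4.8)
The plan is to reduce the statement to the fixed-point-free characterization of maniplexes among premaniplexes recorded in \cref{sec:maniplexes}: a premaniplex $\calN$ is a maniplex if and only if, in $\Conn(\calN)$, each $s_i$ and each product $s_i s_j$ with $i \ne j$ acts without fixed points on the flags. By definition, $(\calM_1,\Phi_1)\mix(\calM_2,\Phi_2)$ is the connected component containing $(\Phi_1,\Phi_2)$ of the graph on $V(\calM_1)\times V(\calM_2)$ in which $(\Psi_1,\Psi_2)^i=(\Psi_1^i,\Psi_2^i)$. First I would check that this component is a connected premaniplex: its color-$i$ edges are governed by the involution $(\Psi_1,\Psi_2)\mapsto(\Psi_1 s_i,\Psi_2 s_i)$, which is well defined because $s_i$ is a permutation of the flags of each of $\calM_1$ and $\calM_2$ and which preserves connected components; and the relations $s_i^2=1$ and $(s_is_j)^2=1$ for $|i-j|>1$ hold in the mix because they hold in each coordinate. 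Thus the premaniplex structure is immediate, and the color-$i$ connection of the mix is exactly the restriction to the component of this coordinatewise involution.

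The heart of the matter is then the observation that a coordinatewise permutation can fix a pair only if it fixes each coordinate. Concretely, if $w$ is $s_i$ or $s_is_j$ with $i\ne j$, and $(\Psi_1,\Psi_2)$ is a flag of the mix with $(\Psi_1,\Psi_2)w=(\Psi_1,\Psi_2)$, then in particular $\Psi_1$ is fixed by the image of $w$ in $\Conn(\calM_1)$. Since $\calM_1$ is a maniplex, $s_i$ and $s_is_j$ ($i\ne j$) have no fixed points on the flags of $\calM_1$, a contradiction. Hence no $s_i$ and no $s_is_j$ with $i \ne j$ fixes a flag of the mix, and by the characterization above the mix is a maniplex.

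I do not anticipate a genuine obstacle; the only point that deserves a sentence of care is confirming that the restriction of the coordinatewise involution $(\Psi_1,\Psi_2)\mapsto(\Psi_1 s_i,\Psi_2 s_i)$ to the chosen connected component really is the color-$i$ connection of the mix regarded as a premaniplex, and this is immediate from the definition of the mix graph. It is worth remarking that the argument uses nothing about $\calM_2$ beyond its being a premaniplex, and nothing about the base flags $\Phi_1,\Phi_2$ beyond their role in selecting which component we call the mix; in particular the conclusion holds for every connected component of the product graph, not merely the one through $(\Phi_1,\Phi_2)$.
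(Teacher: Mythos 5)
Your proof is correct and is essentially the paper's argument: the paper observes that a semi-edge or multiple edge in the mix induces the same defect in each coordinate, which is exactly your statement that a fixed point of $s_i$ or $s_is_j$ on a pair forces a fixed point on each coordinate, contradicting that one factor is a maniplex. The two phrasings are equivalent via the fixed-point-free characterization already recorded in the paper, so there is nothing to add.
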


    \begin{proof}
    If the mix is not a maniplex, then it must have either a semi-edge or multiple edges. A semi-edge in the mix induces a semi-edge in each component, and multiple edges in the mix induce multiple edges in each component as well.
    \end{proof}


    As an application of \cref{mix-with-pre}, consider the mix $\calM \mix \textbf{2}_I^n$. If $\calM$ is $I$-orientable, then it covers $\textbf{2}_I^n$, and $\calM \mix \textbf{2}_I^n \cong \calM$. Otherwise, $\calM \mix \textbf{2}_I^n$ is the $I$-double of $\calM$ as mentioned before: the minimal $I$-orientable maniplex that covers $\calM$. 

     The next result says that in order to check the Path Intersection Property for the mix of a polytope with a premaniplex, it is enough to check that it holds for pairs of flags that agree in one coordinate. 

    \begin{proposition} \label{weaker-pip}
    Let $\calP_1$ be an $n$-polytope and let $\calP_2$ be an $n$-premaniplex. Then $\calP_1 \mix \calP_2$ is a polytope if and only if:
    \begin{enumerate}
    \item Its facets are polytopes,
    \item Its vertex-figures are polytopes, and
    \item For every flag $(\Psi_1, \Psi_2)$ of $\calP_1 \mix \calP_2$ and every flag $\Lambda_2$ of $\calP_2$, whenever $(\Psi_1, \Psi_2) \ijpath{0}{n-2} (\Psi_1, \Lambda_2)$ and $(\Psi_1, \Psi_2) \ijpath{1}{n-1} (\Psi_1, \Lambda_2)$, then $(\Psi_1, \Psi_2) \ijpath{1}{n-2} (\Psi_1, \Lambda_2)$.
    \end{enumerate}
    \end{proposition}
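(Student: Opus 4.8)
The plan is to handle the forward implication in one line and to obtain the converse from \cref{recursive-pip}, with the polytopality of $\calP_1$ supplying the missing ingredient. Note first that $\calP_1 \mix \calP_2$ is a maniplex by \cref{mix-with-pre}, so it is sensible to ask whether it is a polytope. If it is, then its facets and vertex-figures are polytopes (sections of polytopes are polytopes) and it satisfies the full Path Intersection Property of \cref{pip}, which in particular includes condition (3) --- the instance in which the two flags agree in their first coordinate. So the three conditions are necessary, and the work is in the converse.

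For the converse, assume (1), (2) and (3). Since the facets and vertex-figures of $\calP_1 \mix \calP_2$ are polytopal, \cref{recursive-pip} reduces the claim to the following: for every pair of flags $\Lambda = (\Lambda_1, \Lambda_2)$ and $\Psi = (\Psi_1, \Psi_2)$ of $\calP_1 \mix \calP_2$ with $\Lambda \ijpath{0}{n-2} \Psi$ and $\Lambda \ijpath{1}{n-1} \Psi$, one has $\Lambda \ijpath{1}{n-2} \Psi$. The idea is to use $\calP_1$ to ``repair'' the first coordinate so that condition (3) applies. The coordinate projection $\calP_1 \mix \calP_2 \to \calP_1$ preserves edge colors, so the two hypotheses project to $\Lambda_1 \ijpath{0}{n-2} \Psi_1$ and $\Lambda_1 \ijpath{1}{n-1} \Psi_1$ in $\calP_1$; since $\calP_1$ is a polytope, the Path Intersection Property yields $\Lambda_1 \ijpath{1}{n-2} \Psi_1$, i.e.\ $\Psi_1 = u\Lambda_1$ for some $u \in \langle r_1, \ldots, r_{n-2}\rangle$. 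Since the connected component of the product graph containing $(\Phi_1, \Phi_2)$ --- that is, $\calP_1 \mix \calP_2$ --- is closed under the $W_n$-action, the flag $u\Lambda = (\Psi_1,\, u\Lambda_2)$ belongs to $\calP_1 \mix \calP_2$, and $\Lambda \ijpath{1}{n-2} u\Lambda$ because $u \in \langle r_1, \ldots, r_{n-2}\rangle$.

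Now $u\Lambda$ and $\Psi$ have the same first coordinate, so they are exactly the kind of pair governed by condition (3); it remains to check its hypotheses for them. Concatenating $u\Lambda \ijpath{1}{n-2} \Lambda$ with $\Lambda \ijpath{0}{n-2} \Psi$ gives $u\Lambda \ijpath{0}{n-2} \Psi$, and concatenating $u\Lambda \ijpath{1}{n-2} \Lambda$ (a fortiori a $[1,n-1]$-path) with $\Lambda \ijpath{1}{n-1} \Psi$ gives $u\Lambda \ijpath{1}{n-1} \Psi$. Hence condition (3) gives $u\Lambda \ijpath{1}{n-2} \Psi$, and combining this with $\Lambda \ijpath{1}{n-2} u\Lambda$ yields $\Lambda \ijpath{1}{n-2} \Psi$, completing the proof. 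The crux is this reduction: recognizing that \cref{recursive-pip} lets us restrict to the single ``$[0,n-2]$ versus $[1,n-1]$'' instance of the PIP, and then exploiting the polytopality of $\calP_1$ to move $\Lambda$, along a path using only colors in $\{1, \ldots, n-2\}$, to a flag that agrees with $\Psi$ in the first coordinate, at which point the weaker hypothesis (3) takes over. The path-concatenation bookkeeping is routine; the only point requiring any care is that each auxiliary flag introduced is obtained from $\Lambda$ by applying connections and so genuinely lies in the mix.
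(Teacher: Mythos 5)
Your proof is correct and follows essentially the same route as the paper's: reduce to the third condition of \cref{recursive-pip}, project to $\calP_1$ and use its Path Intersection Property to produce a $[1,n-2]$-connection $u$ between the first coordinates, apply $u$ to move one flag so the first coordinates agree, and then invoke condition (3). The only difference from the paper's write-up is the cosmetic one of which of the two flags gets moved.
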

    
    \begin{proof}
    Let $\calQ = \calP_1 \mix \calP_2$. If $\calQ$ is a polytope then it satisfies the Path Intersection Property, which implies the given property.
    
    Conversely, suppose that $\calQ$ satisfies the given properties. 
    By \cref{mix-with-pre}, $\calQ$ is a maniplex. 
    In order to show that $\calQ$ is a polytope, it suffices to show that $\calQ$ satisfies the third condition of \cref{recursive-pip}. 
    Consider flags $(\Psi_1, \Psi_2)$ and $(\Lambda_1, \Lambda_2)$ such that $(\Psi_1, \Psi_2) \ijpath{0}{n-2} (\Lambda_1, \Lambda_2)$ and $(\Psi_1, \Psi_2) \ijpath{1}{n-1} (\Lambda_1, \Lambda_2)$. (See \cref{fig:weaker-pip}.)
    Then $\Psi_1 \ijpath{0}{n-2} \Lambda_1$ and $\Psi_1 \ijpath{1}{n-1} \Lambda_1$, and since $\calP_1$ is a polytope, it follows by \cref{pip} that $\Psi_1 \ijpath{1}{n-2} \Lambda_1$. That is, $\Psi_1 = w \Lambda_1$, with $w \in \langle r_1, \ldots, r_{n-2} \rangle \leq W_n$. Thus $(\Lambda_1, \Lambda_2) \ijpath{1}{n-2} (\Psi_1, w \Lambda_2)$. It follows that $(\Psi_1, \Psi_2) \ijpath{0}{n-2} (\Psi_1, w\Lambda_2)$ and $(\Psi_1, \Psi_2) \ijpath{1}{n-1} (\Psi_1, w\Lambda_2)$, and so by hypothesis, $(\Psi_1, \Psi_2) \ijpath{1}{n-2} (\Psi_1, w\Lambda_2)$. 
    Therefore $(\Psi_1, \Psi_2) \ijpath{1}{n-2} (\Lambda_1, \Lambda_2)$, and the result follows.
    \end{proof}

    \begin{figure}
        \centering
    \begin{tikzpicture}
    \node (A) at (0,0) {$(\Psi_1, \Psi_2)$};
    \node (B) at (4,0) {$(\Lambda_1, \Lambda_2)$};
    \node (C) at (8,0) {$(\Psi_1, w\Lambda_2)$};
    
    \draw[-, bend left=45] (A) to node[above] {$[0, n-2]$} (B);
    \draw[-, bend right=45] (A) to node[below] {$[1, n-1]$} (B);
    \draw[-] (B) to node[above] {$[1, n-2]$} (C);
    \draw[-, dashed, bend right=60] (A) to node[below] {$[1, n-2]$} (C);
    \end{tikzpicture}
    \caption{In $\calP_1 \mix \calP_2$, the full Path Intersection Property follows from the special case where we only consider flags that agree in the first coordinate. See \cref{weaker-pip}.}
    \label{fig:weaker-pip}
    \end{figure}
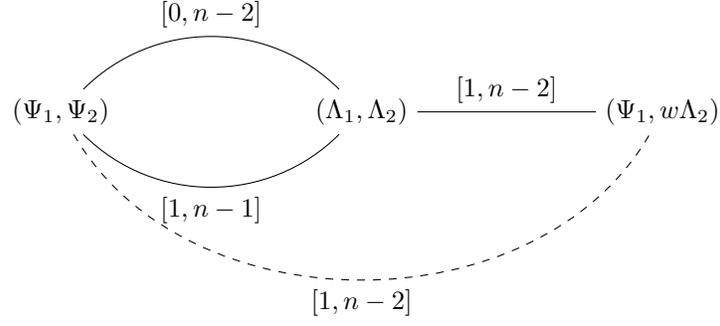

\section{$\calT$-admissibility}
\label{sec:t-admissibility}

    Consider the action of $W_n$ on the premaniplexes $(\calM_1, \Phi_1)$ and $(\calM_2, \Phi_2)$, and let $N_i = \Stab_{W_n}(\Phi_i)$ for $i \in \{1,2\}$. Then, considering the action of $W_n$ on $(\calM_1, \Phi_1) \mix (\calM_2, \Phi_2)$, we see that $\Stab_{W_n}(\Phi_1, \Phi_2) = N_1 \cap N_2$. In particular, if $\calM_1$ and $\calM_2$ are both regular, so that $N_1$ and $N_2$ are normal, then $N_1 \cap N_2$ is normal, and $\G(\calM_1 \mix \calM_2) \cong W_n / (N_1 \cap N_2)$.
    In general, followint the discussion of Section~\ref{sec:reg-mans}, the fact that $\Stab_{W_n}(\Phi_1,\Phi_2)=N_1\cap N_2$ implies that $\Norm_{W_n}(N_1\cap N_2)/(N_1\cap N_2)$ can be regarded as the automorphism group of $\calM_1\mix\calM_2$.

    Now let us consider two $I$-admissible premaniplexes $(\calM_1, \Phi_1)$ and $(\calM_2, \Phi_2)$. We can follow a strategy analogous to what we did for regular premaniplexes: Find $\G^I(\calM_1) \mix \G^I(\calM_2)$ and then build an $I$-admissible premaniplex from the result. In this case, the flag-stabilizers $N_1$ and $N_2$ are normal in $W_n^I$ but not necessarily in $W_n$, and their intersection $N_1 \cap N_2$ is similarly normal in $W_n^I$ but not necessarily in $W_n$. This definition of the mix is again consistent with the combinatorial definition of the mix. For $k$-orbit premaniplexes with $k > 2$, the preceding approach is not always workable, essentially because the normalizers of the flag-stabilizers may not be equal or comparable under inclusion.

    In fact, though $\G(\calM_1 \mix \calM_2) \leq \G(\calM_1) \times \G(\calM_2)$ for \emph{regular} premaniplexes $\calM_1$ and $\calM_2$, this containment is not true in general. For example, if $\calM_1$ and $\calM_2$ are chiral but their mix is regular, then $\G(\calM_1 \mix \calM_2)$ will contain an automorphism that sends the base flag $(\Phi_1, \Phi_2)$ to $(\Phi_1^0, \Phi_2^0)$, and this does not induce an automorphism of the two coordinates since $\calM_1$ and $\calM_2$ are chiral and so $\Phi_i^0$ is in a different orbit from $\Phi_i$. However, it is possible to describe a subgroup of $\G(\calM_1 \mix \calM_2)$ that is contained in $\G(\calM_1) \times \G(\calM_2)$.

    \begin{proposition}
    \label{mix-subdirect}
    Let $(\calT, \Psi)$ be an $n$-premaniplex, and let $N = \Stab_{W_n}(\Psi)$. Consider $n$-maniplexes $(\calM_i, \Phi_i)$ with $i \in \{1,2\}$, and let $N_i = \Stab_{W_n}(\Phi_i)$. Suppose that $N_i \leq N \leq \Norm_{W_n}(N_i)$ for $i \in \{1,2\}$. Then:
    \begin{enumerate}
        \item $N / (N_1 \cap N_2) \leq \G(\calM_1 \mix \calM_2)$.
        \item $N / (N_1 \cap N_2) \leq \G(\calM_1) \times \G(\calM_2)$.
    \end{enumerate}        
    \end{proposition}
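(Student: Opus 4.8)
The plan is to reduce everything to the stabilizer/normalizer dictionary of \cref{sec:reg-mans} and to the identity $\Stab_{W_n}(\Phi_1,\Phi_2) = N_1 \cap N_2$ recorded at the start of \cref{sec:t-admissibility}. First I would spell out the purely group-theoretic consequences of the hypothesis $N_i \leq N \leq \Norm_{W_n}(N_i)$. Since $N_1 \cap N_2 \leq N_i \leq N$, the subgroup $N_1 \cap N_2$ lies inside $N$; and since every element of $N$ normalizes both $N_1$ and $N_2$, it normalizes their intersection, so $N \leq \Norm_{W_n}(N_1 \cap N_2)$. Combining this with $\Stab_{W_n}(\Phi_1,\Phi_2) = N_1 \cap N_2$ and with the fact (also stated at the start of \cref{sec:t-admissibility}) that $\Norm_{W_n}(N_1 \cap N_2)/(N_1 \cap N_2)$ can be regarded as $\G(\calM_1 \mix \calM_2)$ — via the correspondence sending the coset $w(N_1 \cap N_2)$ to the automorphism $\beta_w$ with $(\Phi_1,\Phi_2)\beta_w = w(\Phi_1,\Phi_2) = (w\Phi_1, w\Phi_2)$ — statement (1) is then immediate: $N/(N_1 \cap N_2)$ is a subgroup of $\Norm_{W_n}(N_1 \cap N_2)/(N_1 \cap N_2) \cong \G(\calM_1 \mix \calM_2)$, with $\beta_w = \beta_{w'}$ exactly when $w$ and $w'$ lie in the same coset of $N_1 \cap N_2$.

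For statement (2) I would exhibit each $\beta_w$, with $w \in N$, as the restriction of a genuine coordinatewise product automorphism. Because $w \in N \leq \Norm_{W_n}(N_i)$, the element $w$ normalizes $N_i$ and hence, through the isomorphism $\G(\calM_i, \Phi_i) \cong \Norm_{W_n}(N_i)/N_i$ of \cref{sec:reg-mans}, induces an automorphism $\alpha_i \in \G(\calM_i)$ characterized by $\Phi_i \alpha_i = w\Phi_i$. Now $(\alpha_1, \alpha_2)$ acts coordinatewise on $V(\calM_1) \times V(\calM_2)$; since each $\alpha_i$ is a premaniplex automorphism it commutes with every $s_j$, so $(\alpha_1,\alpha_2)$ commutes with the $W_n$-action on the product graph. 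Because $(\Phi_1,\Phi_2)(\alpha_1,\alpha_2) = (w\Phi_1,w\Phi_2) = w(\Phi_1,\Phi_2)$ is reachable from $(\Phi_1,\Phi_2)$ by a $W_n$-word, it lies in the same connected component, namely $\calM_1 \mix \calM_2$; and a map commuting with the $W_n$-action that sends one vertex of a component into that same component must carry the whole component onto itself. Hence $(\alpha_1,\alpha_2)$ restricts to an automorphism of $\calM_1 \mix \calM_2$ sending $(\Phi_1,\Phi_2)$ to $w(\Phi_1,\Phi_2)$, and by connectivity (automorphisms of a connected premaniplex being determined by the image of a single flag) this restriction equals $\beta_w$. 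Thus every element of the copy of $N/(N_1 \cap N_2)$ sitting in $\G(\calM_1 \mix \calM_2)$ is the restriction of an element of $\G(\calM_1) \times \G(\calM_2)$, which is exactly (2).

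The main obstacle is not a deep one: it is keeping the three ambient groups straight — $\G(\calM_1 \mix \calM_2)$ viewed as a subquotient of $W_n$, viewed as a group of color-preserving graph automorphisms of the mix, and $\G(\calM_1) \times \G(\calM_2)$ acting on the product graph — and making precise in each case what ``$\leq$'' means. The one substantive input is the observation that a coordinatewise product of premaniplex automorphisms commutes with the $W_n$-action and therefore preserves the connected components of the product graph; everything else is routine coset bookkeeping. I would also check at the close that the assignment $w(N_1 \cap N_2) \mapsto (\alpha_1,\alpha_2)$ is injective — its kernel consists of the cosets of $w$ with $w \in N_1$ and $w \in N_2$, i.e. $w \in N_1 \cap N_2$, hence is trivial — so that the copy of $N/(N_1 \cap N_2)$ inside $\G(\calM_1 \mix \calM_2)$ maps isomorphically onto a subgroup of $\G(\calM_1) \times \G(\calM_2)$, matching the informal claim preceding the proposition.
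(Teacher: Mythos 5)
Your proof is correct and follows essentially the same route as the paper: part (1) via $N\leq\Norm_{W_n}(N_1\cap N_2)$ and the identification $\G(\calM_1\mix\calM_2)\cong\Norm_{W_n}(N_1\cap N_2)/(N_1\cap N_2)$, and part (2) via the embedding $w(N_1\cap N_2)\mapsto(wN_1,wN_2)$ into $N/N_1\times N/N_2\leq\G(\calM_1)\times\G(\calM_2)$. The only difference is that you unpack this coset map as a coordinatewise graph automorphism and verify it preserves the connected component — a compatibility the paper leaves implicit — but the underlying argument is the same.
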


    \begin{proof}
    For the first part, since $N$ normalizes $N_1$ and $N_2$, it also normalizes $N_1 \cap N_2$. So $N \leq \Norm_{W_n}(N_1 \cap N_2)$, and since $N_1$ and $N_2$ are contained in $N$, so is $N_1 \cap N_2$, which implies that $N / (N_1 \cap N_2) \leq \Norm_{W_n}(N_1 \cap N_2) / (N_1 \cap N_2) = \G(\calM_1 \mix \calM_2)$. 

    For the second part, first note that $N / (N_1 \cap N_2)$ can be identified with a subgroup of $N/N_1 \times N/N_2$ via the homomorphism that sends $g(N_1 \cap N_2) \mapsto (gN_1, gN_2)$. This is well-defined since if $g(N_1 \cap N_2) = h(N_1 \cap N_2)$, then $h^{-1} g \in N_1 \cap N_2$, and thus $g N_1 = h N_1$ and $g N_2 = h N_2$. Then since $N / N_i$ is a subgroup of $\Norm_{W_n}(N_i) / N_i = \G(\calM_i)$, the result follows.
    \end{proof}

    Let us make the following definition, which is similar in spirit to the definition of $I$-admissible and to \cite[Definition 3.1]{map-operations}: 

    \begin{definition}
    If $(\calM, \Phi)$ and $(\calT, \Psi)$ are $n$-premaniplexes, then $\calM$ is \emph{$\calT$-admissible} if
    \[ \Stab_{W_n}(\Phi) \leq \Stab_{W_n}(\Psi) \leq \Norm_{W_n}(\Stab_{W_n}(\Phi)).\] 
    If we wish to emphasize the base flags, we may also say that $(\calM, \Phi)$ is $(\calT, \Psi)$-admissible.
    \end{definition}

Note that being $I$-admissible is equivalent to being $\bf{2}_I$-admissible.
Furthermore, one can see from the definition that $\calM$ is $\calT$-admissible if and only if $\calT$ is a symmetry type graph of $\calM$ with respect to some $\Gamma\leq \Gamma(\calM)$.
   
    With this definition, the conditions of \cref{mix-subdirect} can be rephrased as saying that $\calM_1$ and $\calM_2$ are both $\calT$-admissible.

     \begin{corollary}
     \label{coro:mixadmissible}
        If $\calM_1$ and $\calM_2$ are $\calT$-admissible $n$-premaniplexes, 
        then $\calM_1\mix\calM_2$ is also $\calT$-admissible.
    \end{corollary}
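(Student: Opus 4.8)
The plan is to reduce the statement directly to the normalizer computation already carried out in the first part of the proof of \cref{mix-subdirect}. Fix base flags $\Phi_i$ for $\calM_i$ and $\Psi$ for $\calT$, and write $N_i = \Stab_{W_n}(\Phi_i)$ and $N = \Stab_{W_n}(\Psi)$. By hypothesis each $\calM_i$ is $\calT$-admissible, which unwinds to $N_i \leq N \leq \Norm_{W_n}(N_i)$ for $i \in \{1,2\}$. The goal is to exhibit a base flag for $\calM_1 \mix \calM_2$, together with a base flag for $\calT$, witnessing $\calT$-admissibility of the mix.

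First I would invoke the observations from the opening of \cref{sec:t-admissibility}: the mix of two premaniplexes is again a premaniplex, the natural base flag of $(\calM_1, \Phi_1) \mix (\calM_2, \Phi_2)$ is $(\Phi_1, \Phi_2)$, and its stabilizer under the $W_n$-action is $\Stab_{W_n}(\Phi_1, \Phi_2) = N_1 \cap N_2$. Hence, taking $\Psi$ as the base flag of $\calT$, it suffices to verify the chain of inclusions
\[ N_1 \cap N_2 \leq N \leq \Norm_{W_n}(N_1 \cap N_2). \]

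The left-hand inclusion is immediate, since $N_1 \cap N_2 \leq N_1 \leq N$. For the right-hand inclusion I would argue exactly as in \cref{mix-subdirect}: because $N$ normalizes both $N_1$ and $N_2$, it normalizes their intersection, so $N \leq \Norm_{W_n}(N_1 \cap N_2)$. (Note that although \cref{mix-subdirect} is phrased for maniplexes, this particular step uses only the inclusion–normalizer hypotheses, so it applies verbatim to premaniplexes.) The displayed chain of inclusions is, by definition, precisely the assertion that $(\calM_1 \mix \calM_2, (\Phi_1, \Phi_2))$ is $(\calT, \Psi)$-admissible.

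There is essentially no obstacle here; the corollary is the ``admissibility'' repackaging of \cref{mix-subdirect}(1), and the only points worth a sentence of care are that the mix of premaniplexes is a premaniplex and that its base-flag stabilizer is $N_1 \cap N_2$, both of which are recorded earlier. One could additionally remark, using the characterization noted after the definition of $\calT$-admissibility, that this says $\calT$ is a symmetry type graph of $\calM_1 \mix \calM_2$ whenever it is one of both factors.
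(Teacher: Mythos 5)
Your proposal is correct and matches the paper's proof, which likewise derives the chain $N_1 \cap N_2 \leq N \leq \Norm_{W_n}(N_1 \cap N_2)$ from the normalizer argument in the proof of \cref{mix-subdirect} and from $\Stab_{W_n}(\Phi_1,\Phi_2) = N_1 \cap N_2$. The paper simply states this in one line; your version spells out the same steps in more detail.
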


    \begin{proof}
    Following the notation of \cref{mix-subdirect}, its proof implies that $N_1 \cap N_2 \leq N \leq \Norm_{W_n}(N_1 \cap N_2)$. 
    \end{proof}


    The next result generalizes \cref{prop:i-or-facets}.

    \begin{proposition}
    \label{prop:t-adm-facets}
    If $(\calM, \Phi)$ is $(\calT, \Psi)$-admissible, then the facet of $\calM$ containing $\Phi$ is $(\calT_{n-1}, \Psi)$-admissible, where $(\calT_{n-1}, \Psi)$ is the connected component containing $\Psi$ of the graph obtained from $\calT$ by deleting edges of label $n-1$.
    \end{proposition}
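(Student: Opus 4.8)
The plan is to reduce the statement to a short group-theoretic computation inside the subgroup $W_{n-1} := \langle r_0, \ldots, r_{n-2}\rangle \leq W_n$, exploiting the fact that the facet of a premaniplex containing a flag is exactly the $W_{n-1}$-orbit of that flag. Write $N = \Stab_{W_n}(\Phi)$ and $M = \Stab_{W_n}(\Psi)$, so that the hypothesis that $(\calM,\Phi)$ is $(\calT,\Psi)$-admissible reads precisely $N \leq M \leq \Norm_{W_n}(N)$.

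First I would pin down the two flag-stabilizers that matter. A flag of $\calM$ lies in the facet containing $\Phi$ if and only if it is joined to $\Phi$ by a path using only colours in $\{0,\dots,n-2\}$, that is, if and only if it lies in $W_{n-1}\Phi$. Thus the facet $F$ of $\calM$ containing $\Phi$, regarded as a rooted $(n-1)$-premaniplex $(F,\Phi)$, carries a transitive $W_{n-1}$-action that is simply the restriction of the $W_n$-action on $\calM$; in particular $\Stab_{W_{n-1}}(\Phi) = N \cap W_{n-1}$, and by the Schreier-graph description of premaniplexes (cf.\ \cref{sec:reg-mans} and \cite{sparse}) this subgroup determines $(F,\Phi)$. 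The identical argument applied to $\calT$ shows that $(\calT_{n-1},\Psi)$ is the rooted $(n-1)$-premaniplex with flag-stabilizer $\Stab_{W_{n-1}}(\Psi) = M \cap W_{n-1}$.

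It then remains to verify the two inclusions defining $(\calT_{n-1},\Psi)$-admissibility for $(F,\Phi)$. The inclusion $N \cap W_{n-1} \leq M \cap W_{n-1}$ is immediate from $N \leq M$. For the second, given $w \in M \cap W_{n-1}$, the fact that $w \in M \leq \Norm_{W_n}(N)$ yields $wNw^{-1} = N$, while $w \in W_{n-1}$ trivially gives $wW_{n-1}w^{-1} = W_{n-1}$; hence $w(N\cap W_{n-1})w^{-1} = N \cap W_{n-1}$, i.e.\ $w \in \Norm_{W_{n-1}}(N\cap W_{n-1})$. Combining, $N\cap W_{n-1} \leq M\cap W_{n-1} \leq \Norm_{W_{n-1}}(N\cap W_{n-1})$, which is exactly the assertion that $(F,\Phi)$ is $(\calT_{n-1},\Psi)$-admissible.

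The only step requiring genuine care — the crux of the argument — is the identification $\Stab_{W_{n-1}}(\Phi) = N \cap W_{n-1}$ together with the accompanying claim that this subgroup genuinely recovers $(F,\Phi)$: this rests on the facet being literally the $W_{n-1}$-orbit of $\Phi$, so that the $W_{n-1}$-action on $F$ loses no information compared to the $W_n$-action on $\calM$. Everything after that point is formal. Finally I would note that the dual argument, replacing $W_{n-1}$ by $\langle r_1,\dots,r_{n-1}\rangle$, gives the analogous statement for vertex-figures, and that specializing to $\calT = \textbf{2}_I^n$ recovers the facet part of \cref{prop:i-or-facets}.
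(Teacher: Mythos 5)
Your proof is correct, but it takes a genuinely different route from the paper's. You work directly from the definition of $\calT$-admissibility as the chain $\Stab_{W_n}(\Phi) \leq \Stab_{W_n}(\Psi) \leq \Norm_{W_n}(\Stab_{W_n}(\Phi))$, identify the facet with the orbit of $\Phi$ under the parabolic subgroup $\langle r_0,\dots,r_{n-2}\rangle$ so that its flag-stabilizer is $N\cap W_{n-1}$ (and likewise $M\cap W_{n-1}$ for $\calT_{n-1}$), and then verify the two required inclusions by a three-line normalizer computation. The paper instead invokes the equivalent characterization that $\calT$ is a symmetry type graph of $\calM$: it restricts the covering $\calM\to\calT$ to the base facets, takes any flag $\Phi'$ of the base facet of $\calM$ lying over $\Psi$, produces $\gamma\in\G(\calM)$ with $\Phi'=\Phi\gamma$, and notes that $\gamma$ stabilizes the facet and hence restricts to an automorphism of it, so the fiber over $\Psi$ lies in a single orbit on the facet. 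Your argument is more self-contained and arguably tighter: the only nontrivial input is that the facet action of $W_{n-1}$ is the restriction of the $W_n$-action, after which everything is formal, and the key step $w(N\cap W_{n-1})w^{-1}=N\cap W_{n-1}$ for $w\in M\cap W_{n-1}$ is airtight. The paper's argument buys a more geometric picture — it exhibits $\calT_{n-1}$ explicitly as a symmetry type graph of the facet, which is the viewpoint the rest of the paper leans on — at the cost of leaving slightly implicit why the fiber being contained in one orbit suffices. Your closing observations (the dual statement for vertex-figures and the specialization to $\calT=\textbf{2}_I^n$ recovering \cref{prop:i-or-facets}) match how the paper uses the result.
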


    \begin{proof}
    The covering from $\calM$ to $\calT$ induces a covering from the base facet $F$ of $\calM$ to the base facet of $\calT$. Suppose $\Phi'$ is a flag in the base facet of $\calM$ such that $\Phi' \mapsto \Psi$. Since $\calM$ is $\calT$-admissible, $\calT$ is a symmetry type graph of $\calM$ and thus there is a $\gamma \in \G(\calM)$ such that $\Phi' = \Phi \gamma$. Then $\gamma$ fixes the base facet of $\calM$ and thus induces an automorphism of that facet, which means that $\Phi'$ is in the same $\G(F)$-orbit as $\Phi$. Thus, $F$ is $G$-admissible, where $G$ is the base facet of $\calT$, which is precisely $\calT_{n-1}$ as defined.
    \end{proof}

    \begin{lemma}
    \label{lem:t-adm-coords}
    Suppose that $\calM_1$ and $\calM_2$ are $\calT$-admissible. If $(\Psi_1, \Psi_2)$ and $(\Psi_1', \Psi_2)$ are flags of $\calM_1 \mix \calM_2$, 
    then $\Psi_1' = \Psi_1 \gamma$ for some $\gamma \in \G(\calM_1)$.
    \end{lemma}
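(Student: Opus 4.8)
The plan is to translate everything into the language of flag-stabilizers in $W_n$ and use the identification $\G(\calM_1,\Phi_1)\cong \Norm_{W_n}(N_1)/N_1$ recalled in \cref{sec:reg-mans}. Fix base flags $\Phi_1,\Phi_2,\Psi$ for $\calM_1,\calM_2,\calT$ and set $N_i=\Stab_{W_n}(\Phi_i)$ and $N=\Stab_{W_n}(\Psi)$, so that $\calT$-admissibility of $\calM_i$ says precisely $N_i\le N\le \Norm_{W_n}(N_i)$. Recall also that $\calM_1\mix\calM_2$ is the connected component of $(\Phi_1,\Phi_2)$ in the product graph, so that every one of its flags has the form $w(\Phi_1,\Phi_2)=(w\Phi_1,w\Phi_2)$ for some $w\in W_n$, and $\Stab_{W_n}(\Phi_1,\Phi_2)=N_1\cap N_2$.

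First I would write $(\Psi_1,\Psi_2)=w(\Phi_1,\Phi_2)$ and $(\Psi_1',\Psi_2)=w'(\Phi_1,\Phi_2)$ for suitable $w,w'\in W_n$. Since the two flags share their second coordinate, $w\Phi_2=w'\Phi_2$, so $n:=(w')^{-1}w\in N_2$; equivalently $w=w'n$ and $w^{-1}w'=n^{-1}$. The crux is then to place $n$ inside the normalizer of $N_1$: by the two admissibility hypotheses, $n\in N_2\le N\le \Norm_{W_n}(N_1)$, hence $n^{-1}\in\Norm_{W_n}(N_1)$ as well. Therefore there is an automorphism $\gamma\in\G(\calM_1,\Phi_1)$ with $\Phi_1\gamma=n^{-1}\Phi_1$, namely the element $\alpha_{n^{-1}}$ of $\Norm_{W_n}(N_1)/N_1$. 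Finally, since automorphisms commute with the action of $W_n$,
\[ \Psi_1\gamma=(w\Phi_1)\gamma=w(\Phi_1\gamma)=w\,n^{-1}\Phi_1=w'\Phi_1=\Psi_1', \]
which is the desired conclusion.

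I expect no genuine obstacle here: the lemma is essentially a reformulation of $\calT$-admissibility transported to the mix. The only things to watch are (i) that both admissibility hypotheses are used — the inclusion $N\le\Norm_{W_n}(N_1)$ comes from $\calM_1$ and $N_2\le N$ comes from $\calM_2$ — and (ii) bookkeeping the left action of connections against the right action of automorphisms when verifying $\Psi_1\gamma=\Psi_1'$. One could instead phrase the argument via the first-coordinate covering $\calM_1\mix\calM_2\to\calM_1$, but the stabilizer computation above seems the most direct.
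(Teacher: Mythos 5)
Your proof is correct and takes essentially the same route as the paper's: both identify the group element connecting the two flags as lying in the stabilizer of the shared second coordinate, push it through the admissibility inclusions into $\Norm_{W_n}(N_1)$, and read off the resulting automorphism of $\calM_1$. The only difference is that you anchor the stabilizers at the base flags $\Phi_1,\Phi_2$ rather than at $\Psi_1,\Psi_2$ directly, which is a slightly more careful piece of bookkeeping but not a different argument.
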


    \begin{proof}
    Let $\Psi$ be the base flag of $\calT$. If $(\Psi_1, \Psi_2)$ and $(\Psi_1', \Psi_2)$ are both flags of $\calM_1 \mix \calM_2$, then there is a path between them, meaning that 
    there is some $w \in W_n$ such that $w \Psi_1 = \Psi_1'$ and $w \Psi_2 = \Psi_2$. Thus, $w \in \Stab_{W_n}(\Psi_2)$, and by $\calT$-admissibility,
    \[ \Stab_{W_n}(\Psi_2) \leq \Stab_{W_n}(\Psi) \leq \Norm_{W_n}(\Stab_{W_n}(\Psi_1)). \]
    Thus, $w \in \Norm_{W_n}(\Stab_{W_n}(\Psi_1))$, which means that there is $\gamma \in \G(\calM_1)$ such that $w \Psi_1 = \Psi_1 \gamma$.
    \end{proof}

    Let us briefly discuss the action of $\G(\calM_1 \mix \calM_2)$ on faces of $\calM_1 \mix \calM_2$. Again, our experience from mixing regular maniplexes may lead us astray. When $\calM_1$ and $\calM_2$ are regular, their automorphism groups act transitively on faces of any given rank, and indeed on chains of any given type. Their mix $\calM_1 \mix \calM_2$ is also regular and shares these transitivity properties. However, in general, the mix of two premaniplexes may lose some transitivity properties that the components had. For example, consider \cref{fig:med-sect-trans}, which shows the mix of the $3$-orbit $4$-maniplexes $3^{1,2}$ and $3^2$ (with semi-edges suppressed). Both $3^{1,2}$ and $3^2$ are medial-section-transitive; removing the edges of color $0$ and $3$ leaves each of them connected. However, removing edges of color $0$ and $3$ from their mix gives us two connected components, showing that the mix is not medial-section-transitive.

    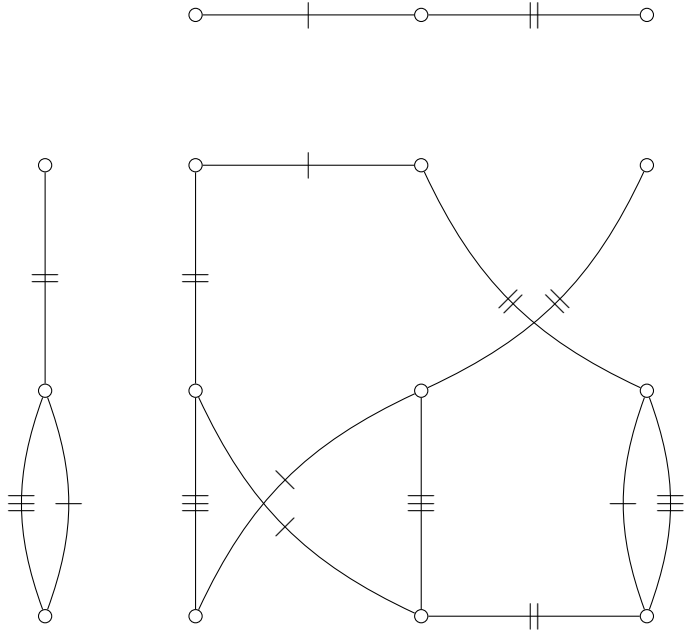
\begin{figure}
        \centering
\begin{tikzpicture}
    \node[circle,draw=black, fill=white, inner sep=0pt,minimum size=5pt] (A) at (3,8) {};
    \node[circle,draw=black, fill=white, inner sep=0pt,minimum size=5pt] (B) at (6,8) {};
    \node[circle,draw=black, fill=white, inner sep=0pt,minimum size=5pt] (C) at (9,8) {};

    \node[circle,draw=black, fill=white, inner sep=0pt,minimum size=5pt] (1) at (1,6) {};
    \node[circle,draw=black, fill=white, inner sep=0pt,minimum size=5pt] (2) at (1,3) {};
    \node[circle,draw=black, fill=white, inner sep=0pt,minimum size=5pt] (3) at (1,0) {};

    \node[circle,draw=black, fill=white, inner sep=0pt,minimum size=5pt] (A1) at (3,6) {};
    \node[circle,draw=black, fill=white, inner sep=0pt,minimum size=5pt] (A2) at (3,3) {};
    \node[circle,draw=black, fill=white, inner sep=0pt,minimum size=5pt] (A3) at (3,0) {};
    
    \node[circle,draw=black, fill=white, inner sep=0pt,minimum size=5pt] (B1) at (6,6) {};
    \node[circle,draw=black, fill=white, inner sep=0pt,minimum size=5pt] (B2) at (6,3) {};
    \node[circle,draw=black, fill=white, inner sep=0pt,minimum size=5pt] (B3) at (6,0) {};

    \node[circle,draw=black, fill=white, inner sep=0pt,minimum size=5pt] (C1) at (9,6) {};
    \node[circle,draw=black, fill=white, inner sep=0pt,minimum size=5pt] (C2) at (9,3) {};
    \node[circle,draw=black, fill=white, inner sep=0pt,minimum size=5pt] (C3) at (9,0) {};
    
    \draw[-] (A) to node  {$|$} (B);
    \draw[-] (B) to node {$||$} (C);

    \draw[-] (1) to node[sloped] {$||$} (2);
    \draw[-, bend left = 20] (2) to node[sloped] {$|$} (3);
    \draw[-, bend right = 20] (2) to node[sloped] {$|||$} (3);

    \draw[-] (A1) to node[sloped] {$|$} (B1);
    \draw[-, bend right=20] (A2) to node[sloped] {$|$} (B3);
    \draw[-, bend left=20] (A3) to node[sloped] {$|$} (B2);
    \draw[-, bend right=20] (C2) to node[sloped] {$|$} (C3);

    \draw[-] (A1) to node[sloped] {$||$} (A2);
    \draw[-, bend right=20] (B1) to node[sloped] {$||$} (C2);
    \draw[-, bend left=20] (C1) to node[sloped] {$||$} (B2);
    \draw[-] (B3) to node[sloped] {$||$} (C3);

    \draw[-] (A2) to node[sloped] {$|||$} (A3);
    \draw[-] (B2) to node[sloped] {$|||$} (B3);
    \draw[-, bend left=20] (C2) to node[sloped] {$|||$} (C3);
    \end{tikzpicture}
    \caption{The mix of medial-section-transitive premaniplexes need not be medial-section-transitive. Here we show the mix of the $3$-orbit $4$-maniplexes $3^{1,2}$ and $3^2$, with semi-edges suppressed; an edge with label $i$ is denoted with $i$ hash marks. Both inputs remain connected when we remove edges with label $0$ and $3$, but the mix (consisting of the nine vertices in the lower-right corner) does not.}
        \label{fig:med-sect-trans}
    \end{figure}

    While we can make no promises about transitivity in general, we can use \cref{coro:mixadmissible} to generalize what happens for regular maniplexes.    

    \begin{proposition}
    \label{lem:med-sect-mix}
    Let $I\subseteq\{0,\dots, n-1\}$.
    Suppose that $\calT$ is an $n$-premaniplex such that it has only one connected component when removing the edges of color $i\in I$. 
    If $\calM_1$ and $\calM_2$ are $\calT$-admissible, then $\G(\calM_1 \mix \calM_2)$ acts transitively on chains of $\calM_1 \mix \calM_2$ of  type $I$.
    \end{proposition}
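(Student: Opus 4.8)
The plan is to reduce this to the symmetry-type-graph characterization of chain-transitivity recalled in \cref{sec:reg-mans}: a premaniplex $\calN$ is transitive on chains of type $I$ exactly when it admits a symmetry type graph (with respect to \emph{some} subgroup of $\G(\calN)$) that stays connected after all (semi)edges of colors in $I$ are deleted. So it suffices to produce such a symmetry type graph for $\calN = \calM_1 \mix \calM_2$, and the obvious candidate is $\calT$ itself.

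First I would apply \cref{coro:mixadmissible} to get that $\calM_1 \mix \calM_2$ is $\calT$-admissible. By the observation following the definition of $\calT$-admissibility — that $\calM$ is $\calT$-admissible precisely when $\calT$ is a symmetry type graph of $\calM$ with respect to some $\G \leq \G(\calM)$ — this says exactly that $\calT$ is a symmetry type graph of $\calM_1 \mix \calM_2$. Since by hypothesis $\calT$ remains connected when the edges of colors $i \in I$ are removed, the chain-transitivity criterion cited above immediately yields that $\G(\calM_1 \mix \calM_2)$ acts transitively on the chains of $\calM_1 \mix \calM_2$ of type $I$.

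The whole argument is essentially bookkeeping once \cref{coro:mixadmissible} and the symmetry-type-graph viewpoint are in place; the one point worth stating carefully is that the criterion is invoked with respect to the (possibly proper) subgroup of $\G(\calM_1 \mix \calM_2)$ witnessing $\calT$-admissibility rather than the full automorphism group — but that is exactly the generality in which the criterion holds. If one preferred a self-contained proof, one could instead take an arbitrary flag $\Theta$ of the mix, lift a path in $\calT$ from the image of $\Theta$ to the image of the base flag that avoids all colors in $I$; the lift terminates at a flag lying in the same type-$I$ chain as $\Theta$ and in the $\G(\calM_1 \mix \calM_2)$-orbit of the base flag, which supplies the required automorphism. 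Routing through the established criterion is cleaner, so that is the route I would take.
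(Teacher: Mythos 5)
Your proof is correct and follows exactly the paper's own argument: invoke \cref{coro:mixadmissible} to get $\calT$-admissibility of the mix, reinterpret this as $\calT$ being a symmetry type graph of $\calM_1 \mix \calM_2$, and then apply the chain-transitivity criterion from \cref{sec:reg-mans}. No meaningful difference from the published proof.
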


    \begin{proof}
Since $\calM_i$ is $\calT$-admissible, by \cref{coro:mixadmissible} so is $\calM_1\mix\calM_2$. 
Hence, $\calT$ is a symmetry type graph of $\calM_1\mix\calM_2$.  Since $\calT$  has only one connected component when removing the edges of color $i\in I$, that means that $\calM_1\mix\calM_2$ is transitive on chains of type $I$.
    \end{proof}

\section{Chirality groups and variance groups}
\label{sec:var-gps}

    Suppose $(\calM, \Phi)$ is a chiral $n$-premaniplex, with flag-stabilizer $N$. Then $N$ is not normal in $W_n$; indeed, $N^{r_i} = N^{r_j} \neq N$ for each $i$ and $j$. The \emph{smallest regular cover} of $\calM$ is the regular premaniplex $\calR$ with flag stabilizer $N \cap N^{r_0}$; every regular premaniplex that covers $\calM$ also covers $\calR$. It is possible to measure `how chiral' $\calM$ is by considering how different $N$ is from $N^{r_0}$. There are two ways of doing that that in some sense coincide, but let us be precise. We define the \emph{upper chirality group of $\calM$} to be
    \[ X_{\wedge}(\calM) := N^{r_0} / (N \cap N^{r_0}) \leq \Conn(\calM) \]
    and the \emph{lower chirality group of $\calM$} to be
    \[ X_{\vee}(\calM) := N N^{r_0} / N \leq \G^+(\calM). \]
    These are isomorphic as abstract groups, and if we only care about the abstract group type, then we call that the \emph{chirality group of $\calM$}, denoted $X(\calM)$. In practice, it usually suffices to work with just the upper chirality group or just the lower chirality group, and so $X(\calM)$ may refer to either $X_{\vee}(\calM)$ or $X_{\wedge}(\calM)$ if it is clear from context which is meant.

    Note the that smallest regular cover of a chiral premaniplex $(\calM, \Phi)$ is precisely the mix $(\calM, \Phi) \mix (\calM, \Phi^0)$ (or, written another way, $\calM \mix \overline{\calM}$). This suggests that the chirality group could be generalized to tell us `how different' two rooted premaniplexes are with respect to their mix. This was done in \cite{var-gps} for chiral and orientable regular polytopes, and we present it here for premaniplexes with no symmetry restrictions, with some small tweaks.

    Suppose that $(\calM_1, \Phi_1)$ and $(\calM_2, \Phi_2)$ are premaniplexes with flag stabilizers $N_1$ and $N_2$, respectively. 
    The natural thing to try is to simply modify the definition of the chirality groups by changing $N$ to $N_1$ and $N^{r_0}$ to $N_2$. We have to take some care, however; it is not guaranteed that $N_1 \cap N_2$ is normal in $N_2$ or that $N_1$ is normal in $N_1 N_2$. With that in mind, we define the \emph{upper variance group of $(\calM_1, \Phi_2)$ with respect to $(\calM_2, \Phi_2)$} to be
    \[ \UVG{\calM_1, \Phi_1}{\calM_2, \Phi_2} := N_2 / (N_1 \cap N_2) \] 
    whenever this is well-defined (i.e., whenever $N_1 \cap N_2$ is normal in $N_2$). We define the \emph{lower variance group of $(\calM_1, \Phi_1)$ with respect to $(\calM_2, \Phi_2)$} as 
    \[ \LVG{\calM_1, \Phi_1}{\calM_2, \Phi_2} := N_1 N_2 / N_1 \] 
    whenever this is well-defined (i.e., whenever $N_1$ is normal in $N_1 N_2$). As abstract groups, $\UVG{\calM_1, \Phi_1}{\calM_2, \Phi_2} \cong \LVG{\calM_1, \Phi_1}{\calM_2, \Phi_2}$, and so if we only care about the isomorphism type we may refer to this simply as the \emph{variance group of $(\calM_1, \Phi_1)$ with respect to $(\calM_2, \Phi_2)$}, denoted $\VG{\calM_1, \Phi_1}{\calM_2, \Phi_2}$. Note that if $\calM$ is chiral, then
    \[ X_{\wedge}(\calM) = X_{\wedge}(\calM, \Phi | \calM, \Phi^0) \]
    and
    \[ X_{\vee}(\calM) = X_{\vee}(\calM, \Phi | \calM, \Phi^0). \]
    We will frequently abbreviate our notation by omitting the base flags, writing $\LVG{\calM_1}{\calM_2}$ and $\UVG{\calM_1}{\calM_2}$.

    Let us discuss some cases that ensure that the variance groups are well-defined.

    \begin{proposition}
    \label{var-gp-well-defd}
    If $(\calM_1, \Phi_1)$ and $(\calM_2, \Phi_2)$ are $(\calT, \Psi)$-admissible for some $(\calT, \Psi)$, then $\LVG{\calM_1, \Phi_1}{\calM_2, \Phi_2}$ and $\UVG{\calM_1, \Phi_1}{\calM_2, \Phi_2}$ are well-defined.
    \end{proposition}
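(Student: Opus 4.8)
The plan is to unwind the definitions and use the hypothesis directly. Write $N_i = \Stab_{W_n}(\Phi_i)$ for $i \in \{1,2\}$ and $N = \Stab_{W_n}(\Psi)$. By the definition of $(\calT,\Psi)$-admissibility, we have
\[ N_1 \leq N \leq \Norm_{W_n}(N_1) \quad\text{and}\quad N_2 \leq N \leq \Norm_{W_n}(N_2). \]
Recall that $\UVGM$ is well-defined precisely when $N_1 \cap N_2 \lhd N_2$, and $\LVGM$ is well-defined precisely when $N_1 \lhd N_1 N_2$. So there are exactly two things to check.

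First I would check that $N_1 \cap N_2 \lhd N_2$. Take $g \in N_2$. Since $N_2 \leq N \leq \Norm_{W_n}(N_1)$, we have $g \in \Norm_{W_n}(N_1)$, so $N_1^{\,g} = N_1$; also $N_2^{\,g} = N_2$ because $g \in N_2$. Hence $(N_1 \cap N_2)^{g} = N_1^{\,g} \cap N_2^{\,g} = N_1 \cap N_2$, which is the required normality. Second I would check that $N_1 \lhd N_1 N_2$: it suffices to show every element of $N_1 N_2$ normalizes $N_1$. An element of $N_1 N_2$ has the form $ab$ with $a \in N_1$, $b \in N_2$; clearly $a$ normalizes $N_1$, and $b$ normalizes $N_1$ since $b \in N_2 \leq \Norm_{W_n}(N_1)$, so $ab \in \Norm_{W_n}(N_1)$. (One also notes in passing that $N_1 N_2$ is actually a subgroup here, since $N_2 \leq \Norm_{W_n}(N_1)$ forces $N_1 N_2 = N_2 N_1$, though this is not strictly needed for the statement.) This gives both well-definedness claims.

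There is essentially no obstacle: the hypothesis that $N$ sits between each $N_i$ and its normalizer does all the work, and the proof is a two-line normalizer computation in each case. The only mild subtlety is being careful about which of the two inclusions in the admissibility hypothesis is used where — the second inclusion $N \leq \Norm_{W_n}(N_i)$ is what matters, while the first inclusion $N_i \leq N$ is used implicitly to know that $N_2$ (resp. the factors of $N_1 N_2$) actually lie inside $\Norm_{W_n}(N_1)$. I would present it as a short paragraph handling the two bullet conditions in turn.
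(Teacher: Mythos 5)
Your proof is correct and is essentially the same argument as the paper's: both reduce the two well-definedness claims to the containments $N_2 \leq N \leq \Norm_{W_n}(N_1) \cap \Norm_{W_n}(N_2) \leq \Norm_{W_n}(N_1 \cap N_2)$ and $N_1 N_2 \leq N \leq \Norm_{W_n}(N_1)$. The only difference is cosmetic — you verify the normality element-by-element where the paper states the subgroup inclusions directly.
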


    \begin{proof}
    let $V = \Stab_{W_n}(\Psi)$, and let $N_i = \Stab_{W_n}(\Phi_i)$. Then $N_i \leq V \leq \Norm(N_i)$ for $i \in \{1,2\}$, and thus $N_1 N_2 \leq V \leq \Norm(N_1)$. Thus, $N_1$ is normal in $N_1 N_2$. Similarly, we have
    \[ N_2 \leq V \leq \Norm(N_1) \cap \Norm(N_2) \leq \Norm(N_1 \cap N_2), \]
    and so $N_1 \cap N_2$ is normal in $N_2$.     
    \end{proof}

    Note that the structure of $\calT$ plays no role in \cref{var-gp-well-defd}; we merely need for some such $\calT$ to exist. 

    Let us now provide a more combinatorial interpretation of $\LVG{\calM_1}{\calM_2}$.

    \begin{proposition}
    \label{var-gp-coordinate}
    Suppose $\LVG{\calM_1}{\calM_2}$ is well-defined.  
    Then 
    \[ \LVGM = \{ \gamma \in \G(\calM_1) : (\Phi_1 \gamma, \Phi_2) \in (\calM_1, \Phi_1) \mix (\calM_2, \Phi_2) \}.\]
    \end{proposition}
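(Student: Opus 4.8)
The plan is to prove the two inclusions separately, translating between the group-theoretic description of $\LVGM = N_1 N_2 / N_1$ and the combinatorial condition that $(\Phi_1\gamma, \Phi_2)$ lies in the connected component of $(\Phi_1,\Phi_2)$ in $\calM_1 \times \calM_2$. Recall that $(\Psi_1, \Psi_2)$ is a flag of $(\calM_1,\Phi_1)\mix(\calM_2,\Phi_2)$ exactly when there is some $w \in W_n$ with $w\Phi_1 = \Psi_1$ and $w\Phi_2 = \Psi_2$, and that an automorphism $\gamma \in \G(\calM_1)$ corresponds to a coset $vN_1$ with $v \in \Norm_{W_n}(N_1)$ via $\Phi_1\gamma = v\Phi_1$; this is the correspondence $\G(\calM_1,\Phi_1) \cong \Norm_{W_n}(N_1)/N_1$ recalled in \cref{sec:reg-mans}. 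Since $\LVGM$ is well-defined, $N_1 \lhd N_1 N_2 \leq \Norm_{W_n}(N_1)$, so every element of $N_1 N_2$ normalizes $N_1$ and hence the quotient $N_1 N_2 / N_1$ makes sense as a subgroup of $\G(\calM_1)$ under this identification.

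First I would show the forward inclusion. Let $\gamma \in \LVGM = N_1 N_2 / N_1$, so $\gamma$ corresponds to $vN_1$ for some $v \in N_2$ (every coset in $N_1 N_2 / N_1$ has a representative in $N_2$). Then $\Phi_1 \gamma = v \Phi_1$ with $v \in N_2 = \Stab_{W_n}(\Phi_2)$, so $v\Phi_2 = \Phi_2$. Taking $w = v$ shows $w\Phi_1 = \Phi_1\gamma$ and $w\Phi_2 = \Phi_2$, so $(\Phi_1\gamma, \Phi_2)$ is reached from $(\Phi_1,\Phi_2)$ by the action of $w$ and therefore lies in the mix. Conversely, suppose $(\Phi_1\gamma, \Phi_2)$ is a flag of the mix for some $\gamma \in \G(\calM_1)$. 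Then there is $w \in W_n$ with $w\Phi_1 = \Phi_1\gamma$ and $w\Phi_2 = \Phi_2$. The second equation gives $w \in N_2$. The first equation says $w\Phi_1 = \Phi_1\gamma = v\Phi_1$ where $v$ is the chosen representative for $\gamma$, so $v^{-1}w \in N_1$, i.e. $w \in vN_1 \subseteq \Norm_{W_n}(N_1)$ and moreover $w$ and $v$ represent the same coset of $N_1$. Since $w \in N_2$, we get $vN_1 = wN_1 \in N_1N_2/N_1$, which is exactly the statement that $\gamma \in \LVGM$.

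The one subtlety to handle carefully — and the step most likely to need a line of justification — is the bookkeeping between $W_n$-elements and automorphisms: I must be sure that the representative $v \in \Norm_{W_n}(N_1)$ chosen for $\gamma$ can, in the forward direction, be taken inside $N_2$, and that in the reverse direction the element $w$ genuinely lands in $\Norm_{W_n}(N_1)$ so that it defines the same automorphism $\gamma$ rather than just some $W_n$-element sending $\Phi_1$ to $\Phi_1\gamma$. Both points follow from well-definedness of $\LVGM$ (which forces $N_1N_2 \leq \Norm_{W_n}(N_1)$, so $N_1N_2/N_1$ really does sit inside $\G(\calM_1)$) together with the semiregular action of $\G(\calM_1)$ on flags (which makes $\gamma$ determined by $\Phi_1\gamma$, so the coset $wN_1 = vN_1$ is unambiguous). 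With those observations in place the argument is just chasing the two equations $w\Phi_1 = \Phi_1\gamma$, $w\Phi_2 = \Phi_2$ back and forth, and no nontrivial computation remains.
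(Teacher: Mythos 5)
Your proof is correct and follows essentially the same route as the paper's: both unwind membership of $(\Phi_1\gamma,\Phi_2)$ in the mix to the existence of $w\in N_2$ with $w\Phi_1=\Phi_1\gamma$, and then identify this with the coset condition $\gamma\in N_1N_2/N_1$ using the fact that well-definedness forces $N_1N_2=N_2N_1$. Your handling of the coset representative in $N_2$ is in fact slightly cleaner than the paper's conjugation step, but the underlying argument is the same.
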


    \begin{proof}
    By the definition of the mix, $(\Phi_1 \gamma, \Phi_2)$ is in $(\calM_1, \Phi_1) \mix (\calM_2, \Phi_2)$ if and only if there is some $w \in W$ such that $(w \Phi_1, w \Phi_2) = (\Phi_1 \gamma, \Phi_2)$. Since $w$ fixes $\Phi_2$, this is true if and only if there is $w \in N_2$ such that $\Phi_1 \gamma = w \Phi_1$. Let us show that this is true if and only if there is $w' \in N_1 N_2$ such that $\Phi_1 \gamma = w' \Phi_1$. Clearly the existence of $w$ implies the existence of $w'$. Conversely, given $w' = uv$ with $u \in N_1$ and $v \in N_2$, then $uvu^{-1}$ is an element of $N_2$, because $N_1$ must normalize $N_2$ if $N_1 N_2 / N_1$ is well-defined. Furthermore, $uvu^{-1} \Phi_1 = uv \Phi_1 = w' \Phi_1$, so taking $w' = uv u^{-1}$ has the desired properties. Finally, note that such a $w'$ exists if and only if $\gamma \in N_1 N_2 / N_1 = \LVG{\calM_1}{\calM_2}$. 
    \end{proof}

    Thus, we can think of \cref{var-gp-coordinate} as saying that $\LVG{\calM_1}{\calM_2}$ measures to what extent we can act independently on the coordinates of $\calM_1 \mix \calM_2$ with automorphisms.
    
    To get some intuition for these definitions, let us consider some special cases.

    \begin{example}
    Suppose that $(\calM_1, \Phi_1)$ is covered by $(\calM_2, \Phi_2)$, so that $N_2 \leq N_1$. 
    Then $N_1\cap N_2=N_2$ and $N_1N_2 = N_1$, so $\UVG{\calM_1, \Phi_1}{\calM_2, \Phi_2}$ and $\LVG{\calM_1, \Phi_1}{\calM_2, \Phi_2}$ are both trivial. 
    \end{example}

    In fact, the following result is immediate:

    \begin{proposition} \label{trivial-var-gp}
    $\VG{\calM_1, \Phi_1}{\calM_2, \Phi_2}$ is trivial if and only if $(\calM_2, \Phi_2)$ covers $(\calM_1, \Phi_1)$.
    \end{proposition}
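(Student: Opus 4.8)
The plan is to unwind the definitions on both sides. Recall $\LVG{\calM_1,\Phi_1}{\calM_2,\Phi_2} = N_1 N_2 / N_1$, where $N_i = \Stab_{W_n}(\Phi_i)$, and this variance group is trivial precisely when $N_1 N_2 = N_1$, i.e.\ when $N_2 \leq N_1$. So the entire statement reduces to the elementary equivalence
\[ N_2 \leq N_1 \iff (\calM_2, \Phi_2) \text{ covers } (\calM_1, \Phi_1). \]
But this is exactly the covering criterion recalled in \cref{sec:reg-mans}: for premaniplexes with base flags, $(\calM_2,\Phi_2)$ covers $(\calM_1,\Phi_1)$ if and only if $\Stab_{W_n}(\Phi_2) \leq \Stab_{W_n}(\Phi_1)$ (citing Lemma 2.1 of \cite{sparse}). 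Putting these two facts together is the whole proof.

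Concretely, the steps are: (i) observe that $\VG{\calM_1,\Phi_1}{\calM_2,\Phi_2}$ being trivial means $\LVG{\calM_1,\Phi_1}{\calM_2,\Phi_2}$ is trivial, which by definition says $N_1 N_2 / N_1$ is the trivial group, hence $N_1 N_2 = N_1$, hence $N_2 \subseteq N_1$; (ii) conversely if $N_2 \leq N_1$ then $N_1 N_2 = N_1$ and the lower variance group is trivial (and so is the upper variance group, $N_2/(N_1 \cap N_2) = N_2/N_2$); (iii) invoke the covering criterion to translate $N_2 \leq N_1$ into ``$(\calM_2,\Phi_2)$ covers $(\calM_1,\Phi_1)$.'' One should also note, for cleanliness, that triviality of one of the variance groups is equivalent to triviality of the other since they are isomorphic as abstract groups (as stated when the variance groups were defined), so it does not matter which one we reason with; alternatively one can just run the argument with the lower variance group throughout, which is the version with the cleanest defining quotient.

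There is essentially no obstacle here — this is why the excerpt calls it ``immediate.'' The only mild subtlety is making sure the variance group is well-defined in the first place so that ``trivial'' even makes sense; but the statement implicitly presupposes we are in a situation where $\VG{\calM_1,\Phi_1}{\calM_2,\Phi_2}$ is defined (e.g.\ the $\calT$-admissible setting of \cref{var-gp-well-defd}), and in any case $N_1 N_2 = N_1$ is a meaningful condition regardless. A secondary point worth a clause is that when $N_2 \leq N_1$ we automatically have $N_1 \trianglelefteq N_1 N_2$ (trivially, as $N_1 N_2 = N_1$) and $N_1 \cap N_2 = N_2 \trianglelefteq N_2$, so the well-definedness issues evaporate in the direction where we assume covering. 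I would write the proof in three or four lines, citing the covering criterion from \cref{sec:reg-mans} and the isomorphism $\UVG{\calM_1}{\calM_2} \cong \LVG{\calM_1}{\calM_2}$ as the only external inputs.
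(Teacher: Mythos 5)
Your proof is correct and matches the paper's intent exactly: the paper treats this proposition as immediate from the preceding example (where $N_2 \leq N_1$ gives $N_1N_2 = N_1$ and $N_1 \cap N_2 = N_2$) combined with the covering criterion $\Stab_{W_n}(\Phi_2) \leq \Stab_{W_n}(\Phi_1)$ recalled in \cref{sec:reg-mans}. Your additional remarks on well-definedness are sensible but not needed beyond what the paper already assumes.
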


    \begin{example}
    Now suppose that $(\calM_1, \Phi_1)$ covers $(\calM_2, \Phi_2)$, so that $N_1 \leq N_2$. In this case, $(\calM_1, \Phi_1) \mix (\calM_2, \Phi_2) = (\calM_1, \Phi_1)$, and both $\UVG{\calM_1, \Phi_1}{\calM_2, \Phi_2}$ and $\LVG{\calM_1, \Phi_1}{\calM_2, \Phi_2}$ are isomorphic to $N_2 / N_1$, provided that this is well-defined.
    \end{example}

    In particular, note that if $\calM_1$ and $\calM_2$ are regular, and $\calM_1$ covers $\calM_2$, then $\VG{\calM_1}{\calM_2}$ is the kernel of the covering from $\G(\calM_1)$ to $\G(\calM_2)$.

    \begin{example}
    \label{ex:var-gp-polygons}
    Suppose that $\calM_1$ is a $p$-gon and $\calM_2$ is a $q$-gon. Considering the action of $W_2$ on these maniplexes, we have $N_1 = \langle (r_0 r_1)^p \rangle$ and $N_2 = \langle (r_0 r_1)^q \rangle$. Thus $N_1 \cap N_2 = \langle (r_0 r_1)^\ell \rangle$, where $\ell = \lcm(p,q)$, and $N_1 N_2 = \langle (r_0 r_1)^g \rangle$, where $g = \gcd(p,q)$. Writing $\G(\calM_1) = \langle \rho_0, \rho_1 \rangle$, we get that $\LVG{\calM_1}{\calM_2}$ can be identified with the subgroup of $\G(\calM_1)$ generated by $(\rho_0 \rho_1)^g$, of order $p/g$. Similarly, if $\G(\calM_1 \mix \calM_2) = \langle \rho_0', \rho_1' \rangle$, then $\UVG{\calM_1}{\calM_2}$ can be identified with the subgroup of $\G(\calM_1 \mix \calM_2)$ generated by $(\rho_0' \rho_1')^q$, of order $\ell/q = p/g$. 
    \end{example}

    Now let us consider the general case. The upper variance group $\UVG{\calM_1}{\calM_2}$ in some sense measures how much you have to add to $\G(\calM_2)$ to get a cover of $\calM_1$. In fact, if $\calM_1$ and $\calM_2$ are regular, then $\UVGM$ is the kernel of the projection from $\G(\calM_1 \mix \calM_2)$ to $\G(\calM_2)$. The lower variance group $\LVG{\calM_1}{\calM_2}$ is a subgroup of $\Norm_{W_n}(N_1) / N_1$, which is the automorphism group of $\calM_1$. It essentially measures how much of $\calM_1$ you have to collapse in order to get a quotient of $\calM_2$. 
    If $\calM_1$ and $\calM_2$ are regular, then $\LVGM$ is the kernel of the projection from $\G(\calM_1)$ to 
    the largest common quotient of $\G(\calM_1)$ and $\G(\calM_2)$.

    Thus, $\VG{\calM_1}{\calM_2}$ will be small if and only if $\calM_1$ is `almost' covered by $\calM_2$, and it will be `big' (that is, have low index in $\G(\calM_1)$) if and only if $\calM_1$ is `far away' from being a quotient of $\calM_2$ (i.e., if we have to collapse most of $\calM_1$ in order to get a quotient of $\calM_2$).

\section{When is the mix of $\calT$-admissible maniplexes a polytope?}
\label{sec:mix-i-admissible}

\subsection{Determining polytopality using variance groups}

Our goal in this section is to take the criteria of \cref{weaker-pip} and translate them to algebraic criteria. 
Throughout this section, we will continue to suppress the choice of base flag to streamline the presentation, but the reader should keep in mind that different choices of base flag can yield materially different results for variance groups and when mixing. Furthermore, we will only be dealing with lower variance groups from now on, and we will drop the subscript $\vee$.

The results in this section often require that two premaniplexes both be $\calT$-admissible for some $\calT$. Note that this is true in particular if both premaniplexes are regular, or if they are both $I$-admissible for some $I$ (in particular if they are both in class $2_I$).


    \begin{proposition} \label{var-gp-as-paths-3}
    Let $\calM_1$ and $\calM_2$ be $\calT$-admissible $n$-premaniplexes for some $\calT$. For $i \in \{1,2\}$, consider a flag $(\Psi_1, \Psi_2)$ in $\calM_1 \mix \calM_2$ such that the facets, vertex-figures, and medial sections of $\Psi_i$ are isomorphic to $\calK_i$, $\calL_i$, and $\calN_i$, respectively.  
    Then the groups $\VGK, \VGL$, and $\VGN$ are well-defined. Furthermore, 
    \begin{enumerate}
        \item $\VGK$ can be identified with 
        \[ \{\gamma \in \G(\calM_1) : (\Psi_1, \Psi_2) \ijpath{0}{n-2} (\Psi_1 \gamma, \Psi_2) \text{ in } \calM_1 \mix \calM_2 \} \]
        \item $\VGL$ can be identified with
        \[ \{\gamma \in \G(\calM_1) : (\Psi_1, \Psi_2) \ijpath{1}{n-1} (\Psi_1 \gamma, \Psi_2) \text{ in } \calM_1 \mix \calM_2 \} \]
        \item $\VGN$ can be identified with
        \[ \{\gamma \in \G(\calM_1) : (\Psi_1, \Psi_2) \ijpath{1}{n-2} (\Psi_1 \gamma, \Psi_2) \text{ in } \calM_1 \mix \calM_2 \} \] 
    \end{enumerate}
    \end{proposition}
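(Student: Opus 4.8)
The plan is to reduce all three statements to an application of \cref{var-gp-coordinate} (the combinatorial interpretation of the lower variance group) together with \cref{prop:t-adm-facets} and its dual, applied to the relevant sections of $\calM_1$ and $\calM_2$. First I would observe that since $\calM_1$ and $\calM_2$ are $\calT$-admissible, \cref{prop:t-adm-facets} tells us that the facet of $\calM_i$ containing $\Psi_i$ is $\calT_{n-1}$-admissible (with base flag $\Psi_i$), the vertex-figure is $\calT_0$-admissible by a dual argument, and the medial section is $\calT_{0,n-1}$-admissible by combining the two. Hence by \cref{var-gp-well-defd} all three of $\VGK$, $\VGL$, $\VGN$ are well-defined, which disposes of the first assertion.

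For part (1): a flag of $\calM_1 \mix \calM_2$ that is joined to $(\Psi_1, \Psi_2)$ by a path using only colors in $[0,n-2]$ lies in the same facet of $\calM_1 \mix \calM_2$ as $(\Psi_1,\Psi_2)$. That facet is exactly the mix $(\calK_1, \Psi_1) \mix (\calK_2, \Psi_2)$ of the facets of the two components, since the connected component of $(\Psi_1,\Psi_2)$ in the graph obtained by deleting edges of color $n-1$ from $\calM_1 \mix \calM_2$ is the connected component of $(\Psi_1,\Psi_2)$ in the product graph restricted to colors $[0,n-2]$, which is by definition the mix of the color-$[0,n-2]$ components of $\calM_1$ and $\calM_2$, i.e.\ $\calK_1 \mix \calK_2$. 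Now I would apply \cref{var-gp-coordinate} to $(\calK_1,\Psi_1)$ and $(\calK_2,\Psi_2)$: it says precisely that $\LVGK$ equals the set of $\gamma \in \G(\calK_1)$ with $(\Psi_1\gamma, \Psi_2)$ a flag of $\calK_1 \mix \calK_2$, i.e.\ with $(\Psi_1,\Psi_2) \ijpath{0}{n-2} (\Psi_1\gamma,\Psi_2)$ inside $\calM_1 \mix \calM_2$. The only remaining point is to justify ``identifying'' such $\gamma \in \G(\calK_1)$ with an element of $\G(\calM_1)$: an automorphism of the facet of $\calM_1$ need not extend to $\calM_1$ in general, but here we use $\calT$-admissibility of $\calM_1$ together with \cref{lem:t-adm-coords}. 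Concretely, if $(\Psi_1\gamma,\Psi_2)$ is a flag of $\calM_1\mix\calM_2$ then by \cref{lem:t-adm-coords} there is $\tilde\gamma \in \G(\calM_1)$ with $\Psi_1\tilde\gamma = \Psi_1\gamma$, and the map $\gamma \mapsto \tilde\gamma$ is the desired identification (well-defined and injective by semiregularity of $\G(\calM_1)$ on flags).

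Parts (2) and (3) follow by the same argument with the roles of the sections changed: for (2) apply the dual of the above, working with the vertex-figures $\calL_1$, $\calL_2$ (the component of $(\Psi_1,\Psi_2)$ after deleting color-$0$ edges is $\calL_1 \mix \calL_2$), so \cref{var-gp-coordinate} gives $\LVGL = \{\gamma \in \G(\calL_1) : (\Psi_1,\Psi_2) \ijpath{1}{n-1} (\Psi_1\gamma,\Psi_2)\}$, again identified with a subset of $\G(\calM_1)$ via \cref{lem:t-adm-coords}; for (3) work with the medial sections $\calN_1$, $\calN_2$ and colors $[1,n-2]$ in the same way. The main obstacle I anticipate is the bookkeeping around these identifications --- making precise that $\G(\calK_1)$, $\G(\calL_1)$, $\G(\calN_1)$ all embed compatibly into $\G(\calM_1)$ on the relevant sets of ``reachable'' flags, and that the three variance-group equalities are equalities of subsets of this common $\G(\calM_1)$ rather than merely of abstract isomorphism type. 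Everything else is a direct citation of \cref{var-gp-coordinate}, \cref{lem:t-adm-coords}, \cref{prop:t-adm-facets}, and \cref{var-gp-well-defd}; there is no genuinely hard computation.
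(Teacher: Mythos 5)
Your proposal is correct and follows essentially the same route as the paper's proof: well-definedness via \cref{prop:t-adm-facets} (and its dual) together with \cref{var-gp-well-defd}, the identification of the facet of the mix with the mix of the facets so that \cref{var-gp-coordinate} applies, and the passage between $\G(\calK_1)$ and $\G(\calM_1)$ via \cref{lem:t-adm-coords} and semiregularity. The only cosmetic difference is that the paper spells out the two inclusions separately (an element of $\VGK$ extends to $\G(\calM_1)$, and conversely an automorphism of $\calM_1$ realizing such a path restricts to the facet and lands in $\VGK$), whereas you package this as a single identification map; the content is the same.
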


    \begin{proof}
    We will prove the first result; the others follow similarly. 
    By \cref{prop:t-adm-facets}, $\calK_1$ and $\calK_2$ are both $\calT_{n-1}$-admissible, where $\calT_{n-1}$ is the base facet of $\calT$. It follows that $\VGK$ is well-defined, by \cref{var-gp-well-defd}. 
    
    Now, suppose $\gamma \in \VGK$, viewed as a subgroup of $\G(\calK_1)$. Then by \cref{var-gp-coordinate}, $(\Psi_1 \gamma, \Psi_2) \in (\calK_1, \Psi_1) \mix (\calK_2, \Psi_2)$, which implies that $(\Psi_1, \Psi_2) \ijpath{0}{n-2} (\Psi_1 \gamma, \Psi_2)$ in $\calM_1 \mix \calM_2$. Then \cref{lem:t-adm-coords} proves that $\Psi_1 \gamma = \Psi_1 \alpha$ for some $\alpha \in \G(\calM_1)$, and since automorphisms act semiregularly on flags, this implies that $\alpha = \gamma$ so that $\gamma \in \G(\calM_1)$.

    Conversely, suppose that $(\Psi_1, \Psi_2) \ijpath{0}{n-2} (\Psi_1 \gamma, \Psi_2)$ for some $\gamma \in \G(\calM_1)$. Then $\gamma$ fixes the facet of $\Psi_1$, so its restriction to that facet is an automorphism of $\G(\calK_1)$. By semiregularity, no other automorphism of $\calM_1$ will have the same restriction to $\calK_1$, and so identifying $\gamma$ with its restriction, we get that $\gamma \in \VGK$ by \cref{var-gp-coordinate}. 
    \end{proof}

    In \cref{thm:all-i-admissible-mix}, we will characterize when the mix of a polytope with a premaniplex is a polytope. We want the polytope to come first, since that is our primary interest, but then it will turn out that we care about variance groups such as $X(\calK_2 | \calK_1)$ rather than $X(\calK_1 | \calK_2)$. Since $\calP_2 \mix \calP_1$ is naturally isomorphic to $\calP_1 \mix \calP_2$ by switching coordinates,  \cref{var-gp-as-paths-3} implies that $\gamma \in X(\calK_2 | \calK_1)$ if and only if 
    \[ (\Psi_1, \Psi_2) \ijpath{0}{n-2} (\Psi_1, \Psi_2 \gamma), \]
    with similar modifications for $\VGLb$ and $X(\calN_{2} | \calN_{1})$.

    \begin{proposition}
    \label{polytopal-implies-vargp}
    Suppose that $\calM_1$ and $\calM_2$ are $\calT$-admissible $n$-premaniplexes for some $\calT$, and that $\calM_1 \mix \calM_2$ is a polytope. Let $(\Psi_1, \Psi_2)$ be a flag of $\calM_1 \mix \calM_2$ such that the facets, vertex-figures, and medial sections of $\Psi_i$ are $\calK_i$, $\calL_i$, and $\calN_i$, respectively. Then
    \[ X(\calK_2 | \calK_1) \cap X(\calL_2 | \calL_1) \leq X(\calN_2 | \calN_1). \]
    \end{proposition}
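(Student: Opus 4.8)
The plan is to translate the claimed group inclusion into a single application of the Path Intersection Property inside $\calM_1 \mix \calM_2$, using the path-theoretic description of variance groups supplied by \cref{var-gp-as-paths-3} and the coordinate-switching remark that follows it.

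First I would handle the bookkeeping. Interchanging the roles of $\calM_1$ and $\calM_2$ in \cref{var-gp-as-paths-3} (which is legitimate since $\calM_2 \mix \calM_1 \cong \calM_1 \mix \calM_2$ by swapping coordinates), we obtain that $\VGKb$, $\VGLb$, and $\VGNb$ are all well-defined, and that each of their elements lifts --- uniquely, by semiregularity of the action of the automorphism group on flags --- to an element of $\G(\calM_2)$. Consequently all three variance groups may be viewed as subgroups of the single group $\G(\calM_2)$, so that the intersection $\VGKb \cap \VGLb$ is taken there, and for a single $\gamma$ in that intersection the three path statements below all refer to the \emph{same} flag $(\Psi_1, \Psi_2 \gamma)$. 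Explicitly, for $\gamma \in \G(\calM_2)$ we have: $\gamma \in \VGKb$ iff $(\Psi_1, \Psi_2) \ijpath{0}{n-2} (\Psi_1, \Psi_2 \gamma)$ in $\calM_1 \mix \calM_2$; $\gamma \in \VGLb$ iff $(\Psi_1, \Psi_2) \ijpath{1}{n-1} (\Psi_1, \Psi_2 \gamma)$; and $\gamma \in \VGNb$ iff $(\Psi_1, \Psi_2) \ijpath{1}{n-2} (\Psi_1, \Psi_2 \gamma)$.

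Given this, the argument is immediate. Let $\gamma \in \VGKb \cap \VGLb$. By the first two descriptions, both $(\Psi_1, \Psi_2) \ijpath{0}{n-2} (\Psi_1, \Psi_2 \gamma)$ and $(\Psi_1, \Psi_2) \ijpath{1}{n-1} (\Psi_1, \Psi_2 \gamma)$ hold in $\calM_1 \mix \calM_2$. Since $\calM_1 \mix \calM_2$ is a polytope, it satisfies the Path Intersection Property of \cref{pip}; applying it to the color ranges $[0, n-2]$ and $[1, n-1]$, whose intersection is $[1, n-2]$, gives $(\Psi_1, \Psi_2) \ijpath{1}{n-2} (\Psi_1, \Psi_2 \gamma)$. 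By the third description, $\gamma \in \VGNb$, and therefore $\VGKb \cap \VGLb \leq \VGNb$.

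The real content is carried entirely by \cref{var-gp-as-paths-3}; the one point that needs care is the claim that the three variance groups can be regarded as subgroups of a common group $\G(\calM_2)$ compatibly with the path descriptions, so that one element $\gamma$ may be fed to all three conditions at once. This is exactly what the uniqueness of the lift (semiregularity) in \cref{var-gp-as-paths-3} provides. Once that identification is set up, the Path Intersection Property does the rest, so no genuine obstacle remains.
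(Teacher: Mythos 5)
Your proof is correct and follows essentially the same route as the paper's: translate membership in the variance groups into path statements via \cref{var-gp-as-paths-3} (with the coordinate switch), apply the Path Intersection Property of \cref{pip}, and translate back. Your extra care about viewing all three variance groups as subgroups of $\G(\calM_2)$ is a reasonable elaboration of what the paper leaves implicit.
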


    \begin{proof}
    Let $\gamma \in X(\calK_2 | \calK_1) \cap X(\calL_2 | \calL_1)$. By \cref{var-gp-as-paths-3} (and switching coordinates as described in the previous paragraph), that means that $(\Psi_1, \Psi_2) \ijpath{0}{n-2} (\Psi_1, \Psi_2 \gamma)$ and also $(\Psi_1, \Psi_2) \ijpath{1}{n-1} (\Psi_1, \Psi_2 \gamma)$. Then by \cref{pip}, $(\Psi_1, \Psi_2) \ijpath{1}{n-2} (\Psi_1, \Psi_2 \gamma)$, which again by \cref{var-gp-as-paths-3} implies that $\gamma \in X(\calN_2 | \calN_1)$.
    \end{proof}

    \begin{theorem}
    \label{thm:all-i-admissible-mix}
    Suppose that $\calP_1$ is an $n$-polytope and $\calP_2$ is an $n$-premaniplex, with both $\calP_1$ and $\calP_2$ $\calT$-admissible for some $\calT$. Then $\calP_1 \mix \calP_2$ is a polytope if and only if:
    \begin{enumerate}
        \item The facets of $\calP_1 \mix \calP_2$ are polytopes,
        \item The vertex-figures of $\calP_1 \mix \calP_2$ are polytopes, and
        \item For every flag $(\Psi_1, \Psi_2)$ of $\calP_1 \mix \calP_2$, if we let the facets, vertex-figures, and medial sections of $\Psi_i$ be $\calK_i$, $\calL_i$, and $\calN_i$, respectively, then
        \[ X(\calK_2 | \calK_1) \cap X(\calL_2 | \calL_1) \leq X(\calN_2 | \calN_1). \]
    \end{enumerate}
    \end{theorem}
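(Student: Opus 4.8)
The plan is to reduce everything to \cref{weaker-pip} and then translate its path hypotheses into the language of variance groups via \cref{var-gp-as-paths-3}. The forward direction is immediate: if $\calP_1 \mix \calP_2$ is a polytope, then its facets and vertex-figures are sections of a polytope, hence polytopes, which gives (1) and (2); and (3) is precisely \cref{polytopal-implies-vargp}.

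For the converse I would assume (1)--(3) and verify the third condition of \cref{weaker-pip}. So fix a flag $(\Psi_1,\Psi_2)$ of $\calP_1 \mix \calP_2$ and a flag $\Lambda_2$ of $\calP_2$ with $(\Psi_1,\Psi_2) \ijpath{0}{n-2} (\Psi_1,\Lambda_2)$ and $(\Psi_1,\Psi_2) \ijpath{1}{n-1} (\Psi_1,\Lambda_2)$; the goal is to produce a path $(\Psi_1,\Psi_2) \ijpath{1}{n-2} (\Psi_1,\Lambda_2)$. The first step is to replace $\Lambda_2$ by a single automorphism: since $\calP_1$ and $\calP_2$ are $\calT$-admissible and $\calP_1 \mix \calP_2 \cong \calP_2 \mix \calP_1$ by swapping coordinates, \cref{lem:t-adm-coords} (with the coordinates interchanged) gives $\gamma \in \G(\calP_2)$ with $\Lambda_2 = \Psi_2 \gamma$, and this $\gamma$ is the unique automorphism doing so, by semiregularity of the automorphism action on flags.

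Next I would translate all three path relations through \cref{var-gp-as-paths-3}, in the coordinate-swapped form recorded in the paragraph following it. Writing $\calK_i$, $\calL_i$, $\calN_i$ for the facets, vertex-figures, and medial sections of $\Psi_i$, the relation $(\Psi_1,\Psi_2) \ijpath{0}{n-2} (\Psi_1,\Psi_2\gamma)$ is equivalent to $\gamma \in X(\calK_2 | \calK_1)$, and $(\Psi_1,\Psi_2) \ijpath{1}{n-1} (\Psi_1,\Psi_2\gamma)$ is equivalent to $\gamma \in X(\calL_2 | \calL_1)$; these variance groups are well-defined by \cref{prop:t-adm-facets} and \cref{var-gp-well-defd}. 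Hence $\gamma \in X(\calK_2 | \calK_1) \cap X(\calL_2 | \calL_1)$, so hypothesis (3) gives $\gamma \in X(\calN_2 | \calN_1)$, which by \cref{var-gp-as-paths-3} again means $(\Psi_1,\Psi_2) \ijpath{1}{n-2} (\Psi_1,\Psi_2\gamma) = (\Psi_1,\Lambda_2)$, completing the verification.

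The substantive content is carried by the lemmas already established, so I do not expect a genuine obstacle here; the points requiring care are the bookkeeping of which premaniplex occupies which coordinate when invoking \cref{lem:t-adm-coords} and \cref{var-gp-as-paths-3}, and the observation that it is the \emph{same} $\gamma$ that witnesses both the $[0,n-2]$-path and the $[1,n-1]$-path --- this is exactly what makes the intersection hypothesis (3) applicable.
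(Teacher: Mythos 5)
Your proposal is correct and takes essentially the same route as the paper's proof: the forward direction via \cref{polytopal-implies-vargp}, and the converse by reducing to the third condition of \cref{weaker-pip}, writing $\Lambda_2 = \Psi_2\gamma$ via \cref{lem:t-adm-coords}, and translating the path relations through \cref{var-gp-as-paths-3} to apply hypothesis (3). Your explicit attention to the coordinate swap and to the fact that a single $\gamma$ witnesses both paths matches what the paper does implicitly.
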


    \begin{proof}
    If $\calP_1 \mix \calP_2$ is a polytope, then certainly its facets and vertex-figures are polytopes, and \cref{polytopal-implies-vargp} proves the last condition.

    Conversely, suppose that $\calP_1 \mix \calP_2$ satisfies the conditions above. We need only prove that $\calP_1 \mix \calP_2$ satisfies the third condition of \cref{weaker-pip}. 
    Consider a flag $(\Psi_1, \Psi_2)$ and fix a flag $\Lambda_2$ of $\calP_2$ such that $(\Psi_1, \Psi_2) \ijpath{0}{n-2} (\Psi_1, \Lambda_2)$ and $(\Psi_1, \Psi_2) \ijpath{1}{n-1} (\Psi_1, \Lambda_2)$. 
    Since $\calP_1$ and $\calP_2$ are $\calT$-admissible, by \cref{lem:t-adm-coords} there exists $\gamma\in\G(\calM_2)$ such that $\Lambda_2=\Psi_2\gamma$. 
    Then \cref{var-gp-as-paths-3} implies that $\gamma \in X(\calK_2 | \calK_1) \cap X(\calL_2 | \calL_1)$. Then by assumption, $\gamma \in X(\calN_2 | \calN_1)$, which by \cref{var-gp-as-paths-3} implies that $(\Psi_1, \Psi_2) \ijpath{1}{n-2} (\Psi_1, \Psi_2 \gamma) = (\Psi_1, \Lambda_2)$, which is what we needed to show.
    \end{proof}

      \begin{corollary}
    \label{trivial-intersection}
    Suppose that $\calP_1$ is an $n$-polytope and $\calP_2$ is an $n$-premaniplex, with both $\calP_1$ and $\calP_2$ $\calT$-admissible, for some premaniplex $\calT$. 
    Suppose that $\calP_1$ and $\calP_2$ are facet- and vertex-transitive, and suppose that $\calP_i$ has facets isomorphic to $\calK_i$ and vertex-figures isomorphic to $\calL_i$ for $i \in \{1,2\}$. If $\calK_1 \mix \calK_2$ and $\calL_1 \mix \calL_2$ are polytopes and $\VGKb \cap \VGLb$ is trivial, then $\calP_1 \mix \calP_2$ is a polytope.      
      \end{corollary}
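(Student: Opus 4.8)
The plan is to deduce this directly from \cref{thm:all-i-admissible-mix}, whose hypotheses are met verbatim: $\calP_1$ is an $n$-polytope, $\calP_2$ an $n$-premaniplex, and both are $\calT$-admissible. So it suffices to verify the three numbered conditions of that theorem for $\calP_1 \mix \calP_2$.

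First I would dispatch conditions (1) and (2). Given a flag $(\Psi_1,\Psi_2)$ of $\calP_1\mix\calP_2$, the facet of $\calP_1\mix\calP_2$ through it is the connected component of $(\Psi_1,\Psi_2)$ under the colors $[0,n-2]$ in the product graph, which is precisely $(\calK_1',\Psi_1)\mix(\calK_2',\Psi_2)$, where $\calK_i'$ is the facet of $\Psi_i$ in $\calP_i$ rooted at $\Psi_i$. Now I invoke facet-transitivity of $\calP_1$ and $\calP_2$: since $\G(\calP_i)$ is transitive on facets and its elements commute with connections, any facet of $\calP_i$ rooted at any of its flags is isomorphic, as a rooted premaniplex, to $\calK_i$ with its base flag. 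As the mix of rooted premaniplexes depends only on their isomorphism types, the facet of $(\Psi_1,\Psi_2)$ is isomorphic to $\calK_1\mix\calK_2$, which is a polytope by hypothesis; this is condition (1). Condition (2) follows by the dual argument, with "vertex-figure" in place of "facet", vertex-transitivity in place of facet-transitivity, and $\calL_i$ in place of $\calK_i$.

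For condition (3), fix any flag $(\Psi_1,\Psi_2)$ of $\calP_1\mix\calP_2$ and let $\calK_i,\calL_i,\calN_i$ be the facet, vertex-figure, and medial section of $\Psi_i$. By the transitivity argument above, the relevant rooted premaniplexes again coincide with the ones based at the base flag, and combining this with the coordinate-swapped version of \cref{var-gp-as-paths-3} — which describes $X(\calK_2|\calK_1)$, $X(\calL_2|\calL_1)$, $X(\calN_2|\calN_1)$ as concrete subgroups of $\G(\calP_2)$ cut out by path conditions in $\calP_1\mix\calP_2$ — the subgroup $X(\calK_2|\calK_1)\cap X(\calL_2|\calL_1)$ is identified with $\VGKb\cap\VGLb$, which is trivial by hypothesis. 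The trivial subgroup is contained in $X(\calN_2|\calN_1)$, so condition (3) holds (and note we never need to know $\calN_i$ explicitly, nor medial-section-transitivity, precisely because the left-hand side collapses). \cref{thm:all-i-admissible-mix} then gives that $\calP_1\mix\calP_2$ is a polytope.

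The routine content is the identification of facets of the mix with mixes of facets and the bookkeeping with the cited results. The one step I expect to be the real obstacle is making rigorous that the facet, vertex-figure, and associated variance groups of an \emph{arbitrary} flag of $\calP_1\mix\calP_2$ truly match those at the base flag: one must move a facet (resp. vertex-figure) of $\calP_i$ to the base one by an automorphism and check that, since automorphisms commute with connections, this carries the chosen flag and the entire mix structure along; here the base flags must be tracked carefully, rather than working up to unrooted isomorphism, and this is exactly where facet- and vertex-transitivity of $\calP_1$ and $\calP_2$ are used.
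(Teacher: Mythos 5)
This is correct and is exactly the paper's (implicit) argument: \cref{trivial-intersection} is stated as an immediate consequence of \cref{thm:all-i-admissible-mix}, obtained by noting that the facets and vertex-figures of $\calP_1 \mix \calP_2$ are mixes of facets and vertex-figures of the $\calP_i$ (so conditions (1) and (2) hold) and that a trivial intersection is contained in any $\VGNb$ (so condition (3) holds vacuously). One small caution: facet-transitivity only yields that the facet at an arbitrary $\Psi_i$, rooted at $\Psi_i$, is isomorphic to $\calK_i$ rooted at \emph{some} flag, not necessarily at the base flag, so your claim of a rooted isomorphism to $(\calK_i,\Phi_i)$ is slightly too strong as written and rootings do affect both polytopality of the mix and the variance groups; this, however, is precisely the base-flag suppression the paper adopts by convention at the start of \cref{sec:mix-i-admissible}, and you correctly single it out as the delicate point of the argument.
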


    The next result is closely related to \cite[Lemma 3.3 and Remark 3.4]{chiral-mix}. 

\begin{proposition} \label{facets-cover-polytopal}
Let $\calP_1$ be an $n$-polytope and  $\calP_2$ be an $n$-premaniplex. 
Suppose that $\calP_1$ and $\calP_2$ are both $\calT$-admissible. 
If, for each flag $(\Psi_1, \Psi_2)$ of $\calP_1 \mix \calP_2$, the facet of $\Psi_1$ covers that of $\Psi_2$, then $\calP_1 \mix \calP_2$ is a polytope.
\end{proposition}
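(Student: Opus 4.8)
The plan is to prove this by induction on $n$, verifying the three conditions of \cref{thm:all-i-admissible-mix}. The base case (say $n\le 2$) is immediate: $\calP_1\mix\calP_2$ is a maniplex by \cref{mix-with-pre}, and every maniplex of rank at most $2$ is polytopal. For the inductive step I would fix an arbitrary flag $(\Psi_1,\Psi_2)$ of $\calP_1\mix\calP_2$, write $\calK_i,\calL_i,\calN_i$ for the facet, vertex-figure, and medial section of $\Psi_i$, and recall that the facet, vertex-figure, and medial section of $(\Psi_1,\Psi_2)$ in $\calP_1\mix\calP_2$ are then $\calK_1\mix\calK_2$, $\calL_1\mix\calL_2$, and $\calN_1\mix\calN_2$ (rooted at $(\Psi_1,\Psi_2)$ in each case).

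Conditions 1 and 3 will be quick. Since $\calK_1$ covers $\calK_2$ by hypothesis, $\calK_1\mix\calK_2\cong\calK_1$, which is a facet of the polytope $\calP_1$ and hence a polytope, so condition 1 holds. By the same hypothesis and \cref{trivial-var-gp}, $X(\calK_2 | \calK_1)$ is trivial (all the relevant variance groups are well-defined by \cref{var-gp-as-paths-3}), so $X(\calK_2 | \calK_1)\cap X(\calL_2 | \calL_1)$ is trivial and hence contained in $X(\calN_2 | \calN_1)$, giving condition 3.

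The real work is condition 2, that $\calL_1\mix\calL_2$ is a polytope, which I would obtain from the inductive hypothesis applied to the $(n-1)$-polytope $\calL_1$ and the $(n-1)$-premaniplex $\calL_2$. This needs two ingredients. First, that $\calL_1$ and $\calL_2$ are admissible over a \emph{common} premaniplex: by \cref{coro:mixadmissible} there is a covering $\calP_1\mix\calP_2\to\calT$, which agrees with both composites $\calP_1\mix\calP_2\to\calP_i\to\calT$, so $\Psi_1$ and $\Psi_2$ have a common image $\Psi'$ in $\calT$; conjugating the defining inclusions for $\calT$-admissibility by a word carrying $(\Phi_1,\Phi_2)$ to $(\Psi_1,\Psi_2)$ shows $(\calP_i,\Psi_i)$ is $(\calT,\Psi')$-admissible, and then a dual argument to \cref{prop:t-adm-facets} shows $\calL_1$ and $\calL_2$ are both $\calT_0$-admissible, where $\calT_0$ is the vertex-figure of $\Psi'$ in $\calT$. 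Second, that the covering hypothesis descends: any flag $(\Theta_1,\Theta_2)$ of $\calL_1\mix\calL_2$ is reachable from $(\Psi_1,\Psi_2)$ by a path using only colors $1,\dots,n-1$, hence is a flag of $\calP_1\mix\calP_2$, so by hypothesis the facet of $\Theta_1$ in $\calP_1$ covers that of $\Theta_2$ in $\calP_2$; intersecting the corresponding $\langle r_0,\dots,r_{n-2}\rangle$-stabilizers with $\langle r_1,\dots,r_{n-2}\rangle$ then shows that the facet of $\Theta_1$ inside $\calL_1$ (a medial section of $\calP_1$) covers the facet of $\Theta_2$ inside $\calL_2$ (a medial section of $\calP_2$). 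The inductive hypothesis then makes $\calL_1\mix\calL_2$ a polytope, and \cref{thm:all-i-admissible-mix} finishes the proof.

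I expect the main obstacle to be bookkeeping rather than anything deep: one must check carefully that the covering-of-facets hypothesis is inherited correctly by the vertex-figures, and that $\calT$-admissibility passes to the vertex-figures of both $\calP_1$ and $\calP_2$ with the \emph{same} witness $\calT_0$, which is exactly what lets the induction close. It is also worth emphasizing, when placing this result in context, that the hypothesis is genuinely asymmetric: it is essential that $\calP_1$ is the polytope and that \emph{its} facets are the ones doing the covering, so one should resist the temptation to symmetrize the statement.
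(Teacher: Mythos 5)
Your proposal is correct and follows essentially the same route as the paper: induction on rank, with conditions 1 and 3 of \cref{thm:all-i-admissible-mix} dispatched via $\calK_1\mix\calK_2\cong\calK_1$ and the triviality of $X(\calK_2\,|\,\calK_1)$ from \cref{trivial-var-gp}, and condition 2 obtained by applying the inductive hypothesis to $\calL_1\mix\calL_2$ after observing that the facets of the vertex-figures are the vertex-figures of the facets. Your treatment of the quantifier over flags and of the common $\calT_0$-admissibility witness is somewhat more explicit than the paper's, but the argument is the same.
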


\begin{proof}
First, note that $n = 2$ is true, since in this case $\calP_1$ and $\calP_2$ are both polygons and so is their mix. 

Now, suppose that the result holds in rank $n-1$. Consider a flag $(\Psi_1, \Psi_2)$ and let $\calK_i$ be the facet of $\Psi_i$. Since $\calK_1$ covers $\calK_2$, \cref{trivial-var-gp}  implies that $X(\calK_{2} | \calK_{1})$ is trivial, so the third condition of \cref{thm:all-i-admissible-mix} is automatically satisfied. Similarly, the facet of $(\Psi_1, \Psi_2)$ is $\calK_1 \mix \calK_2 = \calK_1$, which is polytopal. It remains to show that the vertex-figure of $(\Psi_1, \Psi_2)$ is polytopal.

Let $\calL_i$ be the vertex-figure of $\Psi_i$. Then the vertex-figure of $(\Psi_1, \Psi_2)$ is isomorphic to $\calL_1 \mix \calL_2$. Note that since $\calK_1$ covers $\calK_2$, that implies that the vertex-figures of $\calK_{1}$ cover those of $\calK_{2}$. In other words, the facets of $\calL_{1}$ cover those of $\calL_{2}$. Furthermore, each $\calL_i$ is $\calT_0$-admissible, and $\calL_1$ is a polytope. Thus, by inductive hypothesis,  $\calL_{1} \mix \calL_{2}$ is a polytope, completing the proof.
\end{proof}

In the case that $\calP_1$ and $\calP_2$ are medial-section-transitive, we get a particularly nice corollary of \cref{thm:all-i-admissible-mix}.

    \begin{corollary}
    \label{cor:med-sec-trans-I}
    Suppose that $\calP_1$ is an $n$-polytope and $\calP_2$ is an $n$-premaniplex, with both $\calP_1$ and $\calP_2$ $\calT$-admissible for some $\calT$. Suppose that $\calP_1$ and $\calP_2$ are medial-section-transitive. For $i \in \{1,2\}$, suppose that $\calP_i$ has facets, vertex-figures, and medial sections isomorphic to $\calK_i$, $\calL_i$, and $\calN_i$, respectively. Then $\calP_1 \mix \calP_2$ is a polytope if and only if:
    \begin{enumerate}
        \item $\calK_1 \mix \calK_2$ is a polytope,
        \item $\calL_1 \mix \calL_2$ is a polytope, and
        \item $X(\calK_2 | \calK_1) \cap X(\calL_2 | \calL_1) \leq X(\calN_2 | \calN_1)$.
    \end{enumerate}
    \end{corollary}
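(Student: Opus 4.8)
The plan is to deduce this from \cref{thm:all-i-admissible-mix} by showing that, when $\calP_1$ and $\calP_2$ are medial-section-transitive, the three conditions of that theorem are equivalent to the three conditions stated here. The forward direction is the routine one: if $\calP_1 \mix \calP_2$ is a polytope then \cref{thm:all-i-admissible-mix} already gives that its facets and vertex-figures are polytopes and that, for the facets, vertex-figures and medial sections $\calK_i$, $\calL_i$, $\calN_i$ of \emph{every} flag $(\Psi_1, \Psi_2)$, we have $\VGKb \cap \VGLb \le \VGNb$; specializing to a flag whose coordinates are base flags, the facet of $(\Psi_1, \Psi_2)$ is $\calK_1 \mix \calK_2$ and its vertex-figure is $\calL_1 \mix \calL_2$, so conditions (1)--(3) of the theorem collapse to conditions (1)--(3) stated here.

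For the converse I would assume conditions (1)--(3) here and verify conditions (1)--(3) of \cref{thm:all-i-admissible-mix}; the substance is to promote each statement from ``at the base flag'' to ``at every flag of $\calP_1 \mix \calP_2$''. I would follow the strategy of the proof of \cref{recursive-pip2}: given an arbitrary flag $(\Psi_1, \Psi_2)$ of the mix, slide it along a path that uses only colors in $[1, n-2]$. Such a path never leaves $\calP_1 \mix \calP_2$, keeps the vertex and the facet of each coordinate fixed, and hence fixes the (unrooted) facet, vertex-figure, and medial section of $(\Psi_1, \Psi_2)$, merely permuting flags within the medial section. Since $\calP_1$ and $\calP_2$ are each medial-section-transitive, each coordinate can be carried into the $\G(\calP_i)$-orbit of its base flag by such a slide; once the pair is carried there, the facet and vertex-figure of $(\Psi_1, \Psi_2)$ are isomorphic to $\calK_1 \mix \calK_2$ and $\calL_1 \mix \calL_2$, which are polytopes by hypothesis, while \cref{var-gp-as-paths-3}, \cref{lem:t-adm-coords}, and \cref{weaker-pip} (the latter reducing the Path Intersection Property for the mix to pairs of flags that agree in the first coordinate, whose second coordinates differ by some $\gamma \in \G(\calP_2)$) transport the single inclusion (3) to $(\Psi_1, \Psi_2)$.

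The main obstacle is the word ``simultaneously'' above. Medial-section-transitivity of $\calP_1$ supplies a word $w_1 \in \langle r_1, \dots, r_{n-2} \rangle$ carrying $\Psi_1$ into the orbit of the base flag while fixing its medial section, and $\calP_2$ supplies such a $w_2$ for $\Psi_2$, but in general $w_1 \neq w_2$, whereas $(w\Psi_1, w\Psi_2)$ is a flag of the mix only for a \emph{single} word $w$. I expect this to be handled through $\calT$-admissibility: both coordinates project to the common quotient $\calT$, in which the medial section of a flag is carried to the base medial section essentially uniquely, so the discrepancy $w_2^{-1} w_1$ lies in a flag-stabilizer modulo $\calT$ and can be absorbed, via \cref{lem:t-adm-coords} and \cref{prop:t-adm-facets}, into an automorphism acting in just one coordinate. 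The delicate point is checking that this bookkeeping preserves the variance groups $\VGKb$, $\VGLb$, $\VGNb$ exactly, and not merely up to abstract isomorphism; granting that, the rest is a direct application of the results of \cref{sec:mix-i-admissible} and the earlier sections.
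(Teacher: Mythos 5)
The paper offers no proof of this corollary at all: it is presented as an immediate specialization of \cref{thm:all-i-admissible-mix}, the only observation being that medial-section-transitivity forces every facet, vertex-figure and medial section of $\calP_i$ to be isomorphic to $\calK_i$, $\calL_i$, $\calN_i$. Your forward direction is fine. Your converse, however, attempts something genuinely stronger -- reducing condition (3) of the theorem, which quantifies over \emph{all} flags of $\calP_1 \mix \calP_2$, to the single inclusion at the base flag -- and while you have correctly located the difficulty, your proposed resolution does not close it. Writing $V_i = \Norm_{W_n}(\Stab_{W_n}(\Phi_i))$, medial-section-transitivity of $\calP_i$ says exactly that $W_n = \langle r_1, \ldots, r_{n-2}\rangle V_i$ for each $i$ separately. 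What your sliding argument needs is the single factorization $W_n = \langle r_1, \ldots, r_{n-2}\rangle (V_1 \cap V_2)$: only then is every flag $(u\Phi_1, u\Phi_2)$ of the mix a $[1,n-2]$-translate of a flag obtained from $(\Phi_1, \Phi_2)$ by a \emph{coordinatewise} automorphism, which is what makes the three subgroups of \cref{var-gp-as-paths-3} at $(u\Phi_1, u\Phi_2)$ simultaneous conjugates of the base-flag ones.

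$\calT$-admissibility does not supply this. It gives only $N = \Stab_{W_n}(\Psi) \leq V_1 \cap V_2$, and $\langle r_1, \ldots, r_{n-2}\rangle N = W_n$ precisely when $\calT$ itself is medial-section-transitive -- which is not a hypothesis of this corollary (it is the \emph{extra} hypothesis of \cref{mix-pip2}, whose proof performs precisely the base-flag reduction you are attempting). So the claim that ``the discrepancy $w_2^{-1}w_1$ lies in a flag-stabilizer modulo $\calT$ and can be absorbed'' is not justified: in general $w_2^{-1}w_1 = v_2 v_1^{-1}$ with $v_i \in V_i$, and there is no reason this element can be pushed into one coordinate. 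The needed factorization does hold in the cases the paper highlights -- both $\calP_i$ regular, where $V_i = W_n$; or both $I$-admissible with $\overline{I} \notin \{\{0\},\{n-1\},\{0,n-1\}\}$, where $V_1 \cap V_2 \supseteq W_n^I$ has index $2$ and some $r_j$ with $1 \leq j \leq n-2$ lies outside $W_n^I$ -- and also under the hypothesis of \cref{mix-pip2}. You should therefore either isolate $W_n = \langle r_1, \ldots, r_{n-2}\rangle(V_1 \cap V_2)$ as an explicit lemma and verify it in the setting where you invoke the corollary, or else read condition (3) as quantified over all flags of the mix (all rootings), in which case the corollary is a direct restatement of \cref{thm:all-i-admissible-mix} and your entire second stage of argument is unnecessary.
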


    Note that in particular, \cref{cor:med-sec-trans-I} applies when $\calP_1$ and $\calP_2$ are both regular or both $I$-admissible with $\overline{I} \not \in \{\{0\}, \{n-1\}, \{0,n-1\}\}$.

    With \cref{cor:med-sec-trans-I}, we can refine the third condition of \cref{weaker-pip} so that rather than having to consider every pair of flags that agree in the first coordinate, we may insist that the first flag be the base flag of the mix.

    \begin{theorem} \label{mix-pip2}
    Let $\calP_1$ be an $n$-polytope and let $\calP_2$ be an $n$-premaniplex, and let $\Phi_i$ be the base flag of $\calP_i$ for $i \in \{1,2\}$. Let $\calT$ be a medial-section-transitive premaniplex, and suppose that $\calP_1$ and $\calP_2$ are $\calT$-admissible. Let $\calK_i$ and $\calL_i$ be the facets and vertex-figures (respectively) of $\calP_i$, $i \in \{1,2\}$. Then $\calP_1 \mix \calP_2$ is a polytope if and only if
    \begin{enumerate}
        \item $\calK_1 \mix \calK_2$ is a polytope,
        \item $\calL_1 \mix \calL_2$ is a polytope,
        \item For every flag $\Lambda_2$ of $\calP_2$, whenever $(\Phi_1, \Phi_2) \ijpath{0}{n-2} (\Phi_1, \Lambda_2)$ and $(\Phi_1, \Phi_2) \ijpath{1}{n-1} (\Phi_1, \Lambda_2)$, then $(\Phi_1, \Phi_2) \ijpath{1}{n-2} (\Phi_1, \Lambda_2)$.
    \end{enumerate}
    \end{theorem}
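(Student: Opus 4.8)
The plan is to obtain this theorem by repackaging \cref{cor:med-sec-trans-I}, using the path interpretation of variance groups from \cref{var-gp-as-paths-3} and the coordinate-transfer statement \cref{lem:t-adm-coords} to recast the algebraic inclusion occurring there as the path condition~(3) above. First I would observe that, since each $\calP_i$ is $\calT$-admissible, $\calT$ is a symmetry type graph of $\calP_i$; as $\calT$ is medial-section-transitive and a premaniplex is chain-transitive of a given type exactly when one of its symmetry type graphs stays connected after deleting the corresponding colours, each $\calP_i$ is itself medial-section-transitive. So the hypotheses of \cref{cor:med-sec-trans-I} hold (taking base flag $(\Phi_1,\Phi_2)$ and letting $\calK_i,\calL_i,\calN_i$ be the facet, vertex-figure and medial section of $\Phi_i$), and that corollary says that $\calP_1\mix\calP_2$ is a polytope if and only if $\calK_1\mix\calK_2$ and $\calL_1\mix\calL_2$ are polytopes and $\VGKb\cap\VGLb\leq\VGNb$. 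The first two conditions are literally conditions~(1) and~(2) of the theorem, so everything reduces to showing that the inclusion $\VGKb\cap\VGLb\leq\VGNb$ is equivalent to condition~(3).

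Next I would translate that inclusion into paths by applying \cref{var-gp-as-paths-3} at the flag $(\Phi_1,\Phi_2)$, in the coordinate-swapped form recorded immediately after that proposition. This gives, for $\gamma\in\G(\calP_2)$: $\gamma\in\VGKb$ iff $(\Phi_1,\Phi_2)\ijpath{0}{n-2}(\Phi_1,\Phi_2\gamma)$; $\gamma\in\VGLb$ iff $(\Phi_1,\Phi_2)\ijpath{1}{n-1}(\Phi_1,\Phi_2\gamma)$; and $\gamma\in\VGNb$ iff $(\Phi_1,\Phi_2)\ijpath{1}{n-2}(\Phi_1,\Phi_2\gamma)$. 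Hence $\VGKb\cap\VGLb\leq\VGNb$ holds precisely when: for every $\gamma\in\G(\calP_2)$, if $(\Phi_1,\Phi_2)\ijpath{0}{n-2}(\Phi_1,\Phi_2\gamma)$ and $(\Phi_1,\Phi_2)\ijpath{1}{n-1}(\Phi_1,\Phi_2\gamma)$, then $(\Phi_1,\Phi_2)\ijpath{1}{n-2}(\Phi_1,\Phi_2\gamma)$. Note this step needs only $\calT$-admissibility, not conditions~(1) or~(2).

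Finally I would match this with condition~(3), whose only apparent difference is that it quantifies over all flags $\Lambda_2$ of $\calP_2$ rather than over the flags of the form $\Phi_2\gamma$. One implication is immediate: condition~(3) applied with $\Lambda_2=\Phi_2\gamma$ gives the inclusion. For the converse, assume the inclusion and let $\Lambda_2$ be any flag of $\calP_2$ with $(\Phi_1,\Phi_2)\ijpath{0}{n-2}(\Phi_1,\Lambda_2)$ and $(\Phi_1,\Phi_2)\ijpath{1}{n-1}(\Phi_1,\Lambda_2)$; either of these paths shows that $(\Phi_1,\Lambda_2)$ lies in the connected component $\calP_1\mix\calP_2$, so by \cref{lem:t-adm-coords} (applied with coordinates swapped, using the $\calT$-admissibility of $\calP_1$ and $\calP_2$) there is $\gamma\in\G(\calP_2)$ with $\Lambda_2=\Phi_2\gamma$, and then the inclusion yields $(\Phi_1,\Phi_2)\ijpath{1}{n-2}(\Phi_1,\Phi_2\gamma)=(\Phi_1,\Lambda_2)$, which is condition~(3). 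I do not expect a genuine obstacle, since the theorem is essentially a reformulation of \cref{cor:med-sec-trans-I}; the part requiring the most care is the bookkeeping in the last two paragraphs — invoking the coordinate-swapped forms of \cref{var-gp-as-paths-3} and \cref{lem:t-adm-coords} with the right colour ranges and with the variance groups $\VGKb,\VGLb,\VGNb$ rather than $\VGK,\VGL,\VGN$, and checking that the flags $\Lambda_2$ actually relevant to condition~(3) are exactly those of the form $\Phi_2\gamma$, so that fixing the first coordinate to the base flag $\Phi_1$ loses nothing.
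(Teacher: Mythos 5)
Your proposal is correct and follows essentially the same route as the paper: reduce to \cref{cor:med-sec-trans-I} and translate the inclusion $\VGKb\cap\VGLb\leq\VGNb$ into the path condition at the base flag via \cref{var-gp-as-paths-3}, using \cref{lem:t-adm-coords} to see that the relevant flags $\Lambda_2$ are exactly those of the form $\Phi_2\gamma$. The only (harmless) differences are that the paper gets the forward implication directly from \cref{weaker-pip} rather than from the equivalence you set up, and that you make explicit the step—left implicit in the paper—that $\calT$-admissibility with $\calT$ medial-section-transitive forces each $\calP_i$ to be medial-section-transitive.
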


    \begin{proof}
    If $\calP_1 \mix \calP_2$ is a polytope, then the conditions follow from \cref{weaker-pip}. 

    Conversely, suppose that the three conditions hold. We will show that the third condition of \cref{cor:med-sec-trans-I} holds. Let $\gamma \in \VGKb \cap \VGLb$. By \cref{var-gp-as-paths-3} (and considering the fact that the base flag $\Phi_i$ of $\calP_i$ has facets $\calK_i$ and vertex-figures $\calL_i$), it follows that $(\Phi_1, \Phi_2) \ijpath{0}{n-2} (\Phi_1, \Phi_2 \gamma)$ and $(\Phi_1, \Phi_2) \ijpath{1}{n-1} (\Phi_1, \Phi_2 \gamma)$. Then by assumption, $(\Phi_1, \Phi_2) \ijpath{1}{n-2} (\Phi_1, \Phi_2 \gamma)$, and then since the base flag has medial sections $\calN_i$, \cref{var-gp-as-paths-3} implies that $\gamma \in \VGNb$.
    \end{proof}

      In the following sections we showcase several applications of \cref{thm:all-i-admissible-mix} and its corollaries. In some cases, similar results have appeared in the literature before, though we can now generalize many of them.

\subsection{Mixes of $4$-polytopes}

The mix of polyhedra is always a polyhedron (see \cite[Theorem 3.8]{k-orbit}). Let us use this and apply \cref{thm:all-i-admissible-mix} to the mix of $4$-polytopes. 

\begin{proposition} \label{mix-4-polytopes}
Suppose $\calP_1$ and $\calP_2$ are $\calT$-admissible, medial-section-transitive $4$-polytopes. Suppose that each $\calP_i$ has facets isomorphic to $\calK_i$, vertex-figures isomorphic to $\calL_i$, and medial sections isomorphic to $\calN_i$. Then $\calP_1 \mix \calP_2$ is a polytope if and only if $X(\calK_2 | \calK_1) \cap X(\calL_2 | \calL_1) \leq X(\calN_2 | \calN_1)$.
\end{proposition}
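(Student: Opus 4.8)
The plan is to obtain this as the rank-$4$ specialization of \cref{cor:med-sec-trans-I}. All of that corollary's hypotheses are satisfied here: $\calP_1$ is a $4$-polytope, $\calP_2$ is in particular a $4$-premaniplex, both are $\calT$-admissible and medial-section-transitive by assumption, and the facets, vertex-figures, and medial sections of $\calP_i$ have been named $\calK_i$, $\calL_i$, $\calN_i$ as required. Thus \cref{cor:med-sec-trans-I} already tells us that $\calP_1 \mix \calP_2$ is a polytope if and only if the three conditions hold: $\calK_1 \mix \calK_2$ is a polytope, $\calL_1 \mix \calL_2$ is a polytope, and $X(\calK_2 | \calK_1) \cap X(\calL_2 | \calL_1) \leq X(\calN_2 | \calN_1)$.

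The only thing left is to observe that in rank $4$ the first two conditions hold automatically, so the characterization collapses to the third. Since $\calP_1$ and $\calP_2$ are $4$-polytopes, their facets $\calK_1, \calK_2$ and vertex-figures $\calL_1, \calL_2$ are $3$-polytopes, i.e.\ polyhedra. Invoking the fact that the mix of two polyhedra is always a polyhedron (\cite[Theorem 3.8]{k-orbit}), we get that both $\calK_1 \mix \calK_2$ and $\calL_1 \mix \calL_2$ are polyhedra, hence polytopes, discharging conditions (1) and (2). Therefore $\calP_1 \mix \calP_2$ is a polytope if and only if $X(\calK_2 | \calK_1) \cap X(\calL_2 | \calL_1) \leq X(\calN_2 | \calN_1)$, which is the claim, and both directions are delivered at once by the biconditional in \cref{cor:med-sec-trans-I}.

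I do not expect any genuine obstacle: the substance of the statement is exactly \cref{cor:med-sec-trans-I}, and the rank-$4$ hypothesis serves only to make its polytopality conditions on the facets and vertex-figures vacuous, via the known polyhedrality of mixes of polyhedra. The sole point requiring a little care is that the variance groups $X(\calK_2 | \calK_1)$, $X(\calL_2 | \calL_1)$, $X(\calN_2 | \calN_1)$ be well-defined, but this is already built into \cref{cor:med-sec-trans-I} (ultimately via \cref{prop:t-adm-facets} and \cref{var-gp-well-defd}, since $\calT$-admissibility of $\calP_1$ and $\calP_2$ passes to their facets, vertex-figures, and medial sections).
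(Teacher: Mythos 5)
Your proposal is correct and matches the paper's own argument: the paper likewise invokes \cite[Theorem 3.8]{k-orbit} to conclude that the mix of the polyhedral facets and vertex-figures is a polyhedron, so that conditions (1) and (2) of \cref{cor:med-sec-trans-I} are automatic and only the variance-group condition remains.
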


\begin{proof}
By \cite[Theorem 3.8]{k-orbit}, the mix of two polyhedra is always a polyhedron. Then the result follows from \cref{cor:med-sec-trans-I} since the facets and vertex-figures of $\calP_1 \mix \calP_2$ must be polytopes. 
\end{proof}

The next result is somewhat analogous to \cite[Thm. 9.1]{chiral-mix}.

\begin{proposition} \label{mix-4-orientable}
Suppose that $\calP_i$ is a regular $4$-polytope of type $\{p_i, q_i, r_i\}$ for $i \in \{1,2\}$. If $\calP_2$ is orientable, and if $\gcd(q_1, q_2) = 1$, then $\calP_1 \mix \calP_2$ is polytopal.
\end{proposition}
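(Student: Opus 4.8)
The plan is to invoke \cref{mix-4-polytopes}. Regular polytopes are medial-section-transitive, and, being both regular, $\calP_1$ and $\calP_2$ are $\calT$-admissible for a common $\calT$; so the hypotheses of \cref{mix-4-polytopes} are met. Write $\calK_i$ for the facet $\{p_i,q_i\}$ of $\calP_i$, $\calL_i$ for its vertex-figure $\{q_i,r_i\}$, and $\calN_i$ for its medial section, a $q_i$-gon. Then the proposition reduces to the single inclusion
\[ X(\calK_2 \mid \calK_1) \cap X(\calL_2 \mid \calL_1) \leq X(\calN_2 \mid \calN_1). \]

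I would first identify the right-hand side. Applying \cref{ex:var-gp-polygons} to the $q_2$-gon $\calN_2$ and the $q_1$-gon $\calN_1$, the group $X(\calN_2 \mid \calN_1)$ is the cyclic subgroup of $\G(\calN_2)$ of order $q_2/\gcd(q_1,q_2) = q_2$ generated by the rotation $\rho_1\rho_2$; since $\gcd(q_1,q_2)=1$ this is exactly $\G^+(\calN_2)$, the index-two rotation subgroup of $\G(\calN_2)$ (a dihedral group of order $2q_2$). So it remains to show the intersection on the left lies in $\G^+(\calN_2)$. For that, write $\G(\calP_2)=\langle\rho_0,\rho_1,\rho_2,\rho_3\rangle$. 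By \cref{var-gp-as-paths-3}, read with coordinates switched as in the discussion preceding \cref{polytopal-implies-vargp}, $X(\calK_2\mid\calK_1)$ lies inside the facet subgroup $\langle\rho_0,\rho_1,\rho_2\rangle$ and $X(\calL_2\mid\calL_1)$ inside the vertex-figure subgroup $\langle\rho_1,\rho_2,\rho_3\rangle$. Since $\calP_2$ is a polytope, $\G(\calP_2)$ is a string C-group, so $\langle\rho_0,\rho_1,\rho_2\rangle\cap\langle\rho_1,\rho_2,\rho_3\rangle=\langle\rho_1,\rho_2\rangle=\G(\calN_2)$, and hence $X(\calK_2\mid\calK_1)\cap X(\calL_2\mid\calL_1)\leq\G(\calN_2)$.

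The crux — and the one place where orientability of $\calP_2$ is genuinely used — is to upgrade this to containment in $\G^+(\calN_2)$, i.e.\ to rule out reflections of $\G(\calN_2)$. Since $\calP_2$ is orientable there is a homomorphism $\varepsilon\colon\G(\calP_2)\to\ZZ/2$ sending every $\rho_i$ to $1$, whose kernel meets $\G(\calN_2)$ exactly in $\G^+(\calN_2)$; equivalently, the $2$-colouring of the flags of $\calP_2$ restricts to one on the medial section $\calN_2$. Given $\gamma$ in the intersection, \cref{var-gp-as-paths-3} provides a word $w$ in $r_0,r_1,r_2$ and a word $w'$ in $r_1,r_2,r_3$, each fixing the base flag $\Psi_1$ of $\calP_1$ and carrying $\Psi_2$ to $\Psi_2\gamma$; because $\calP_2$ is orientable (so its flag-stabilizers lie in $W_4^+$), $\varepsilon(\gamma)$ equals the parity of $w$ and of $w'$, and these parities agree. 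The delicate step, which I expect to carry the real weight of the argument, is to force this common parity to vanish: $\Psi_2\gamma$ lies in the base medial section of $\calP_2$ (so $\gamma\in\langle\rho_1,\rho_2\rangle$), and transporting the closed walks $w$, $w'$ at $\Psi_1$ into the $q_1$-gon medial section of $\calP_1$ and using $\gcd(q_1,q_2)=1$ should show that the rotation of $\calN_2$ realizing $\Psi_2\mapsto\Psi_2\gamma$ is an even power of $\rho_1\rho_2$, i.e.\ $\gamma\in\G^+(\calN_2)$. Granting this, $\varepsilon$ kills $X(\calK_2\mid\calK_1)\cap X(\calL_2\mid\calL_1)$, the displayed inclusion holds, and \cref{mix-4-polytopes} yields that $\calP_1\mix\calP_2$ is polytopal.
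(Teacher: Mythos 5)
Your setup matches the paper's: reduce to \cref{mix-4-polytopes}, identify $X(\calN_2 \mid \calN_1) = \langle \rho_1\rho_2\rangle$ via \cref{ex:var-gp-polygons} and $\gcd(q_1,q_2)=1$, and use the intersection condition for the string C-group $\G(\calP_2)$ to place $X(\calK_2\mid\calK_1)\cap X(\calL_2\mid\calL_1)$ inside $\langle\rho_1,\rho_2\rangle$. All of that is correct and is exactly how the paper's proof begins.

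The problem is that you stop precisely at the step that carries the content of the proposition. You write that transporting the walks into the medial section of $\calP_1$ and using $\gcd(q_1,q_2)=1$ ``should show'' that $\gamma$ lands in $\langle\rho_1\rho_2\rangle$, and you then proceed ``granting this.'' That is not a proof of the step; it is a restatement of what remains to be shown. Note also that the $\gcd$ hypothesis has already been spent by this point: it was used to identify $X(\calN_2\mid\calN_1)$ as the full rotation subgroup of $\G(\calN_2)$ and plays no further role, so a second appeal to it is a sign the plan is off track. What remains is to rule out reflections of $\G(\calN_2)$ in the intersection, and the paper does this using only the orientability of $\calP_2$: since $\calP_2$ is orientable, the length parity of a word in the generators representing a given element of $\G(\calP_2)$ is well defined, hence $\langle\rho_1,\rho_2\rangle\cap\G^+(\calP_2)=\langle\rho_1\rho_2\rangle$, and the paper concludes that $X(\calK_2\mid\calK_1)\cap\langle\rho_1,\rho_2\rangle = X(\calK_2\mid\calK_1)\cap\langle\rho_1\rho_2\rangle$. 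You correctly observe that $\varepsilon(\gamma)$ equals the common parity of your witnesses $w$ and $w'$, but you never show that this parity is even; moreover your sentence about ``the rotation of $\calN_2$ realizing $\Psi_2\mapsto\Psi_2\gamma$'' being ``an even power of $\rho_1\rho_2$'' both presupposes that $\gamma$ is a rotation (which is the thing to be proved) and conflates ``even power of $\rho_1\rho_2$'' with ``element of $\langle\rho_1\rho_2\rangle$.'' So the proposal is incomplete at its crux and needs the parity argument actually carried out.
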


\begin{proof}
By \cref{mix-4-polytopes}, it suffices to show $X(\calK_2 | \calK_1) \cap X(\calL_2 | \calL_1) \leq X(\calN_2 | \calN_1)$. 
Let $\G(\calP_2) = \langle \rho_0, \ldots, \rho_3 \rangle$. Since $\gcd(q_1, q_2) = 1$, we have $X(\calN_2 | \calN_1) = \langle \rho_1 \rho_2 \rangle$ (see \cref{ex:var-gp-polygons}). Then, since $\calP_2$ is a polytope, we have $X(\calK_2 | \calK_1) \cap X(\calL_2 | \calL_1)$ will be contained in $\langle \rho_1, \rho_2 \rangle$, and since $\calP_2$ is orientable, $X(\calK_2 | \calK_1) \cap \langle \rho_1, \rho_2 \rangle = X(\calK_2 | \calK_1) \cap \langle \rho_1 \rho_2 \rangle$. Thus 
$X(\calK_2 | \calK_1) \cap X(\calL_2 | \calL_1) \leq X(\calN_2 | \calN_1)$.
\end{proof}


\subsection{Mixes of $I$-admissible maniplexes that are not medial-section-transitive}
\label{sec:non-med-trans-mix}
    As pointed out before, \cref{cor:med-sec-trans-I} can be applied to $I$-admissible polytopes and premaniplexes, when $\bar{I} \notin \{ \{0\}, \{n-1\}, \{0,n-1\}\}$ 
    Let us now apply \cref{thm:all-i-admissible-mix} to the  mixes of $I$-admissible polytopes for $\bar{I} \in \{ \{0\}, \{n-1\}, \{0,n-1\}\}$. 
    That essentially amounts to considering the possible facets, vertex-figures, and medial-sections of each possible flag of $\calP_1 \mix \calP_2$. 
    Recall that $I$-admissible premaniplexes have at most two orbits of facets, vertex-figures and medial-sections. 

    \begin{corollary}
    \label{cor:2_0n-1}
    Suppose that $\calP_1$ is an $n$-polytope and $\calP_2$ is an $n$-premaniplex. Let $I = \{1, \ldots, n-2\}$, and suppose that $\calP_1$ and $\calP_2$ are $I$-admissible. For $i \in \{1,2\}$, suppose that $\calP_i$ has facets and vertex-figures isomorphic to $\calK_i$ and $\calL_i$, respectively. Further, suppose that the base medial section of $\calP_i$ is isomorphic to $\calN_{i,1}$ and that the other orbit of medial section is isomorphic to $\calN_{i,2}$. Then $\calP_1 \mix \calP_2$ is a polytope if and only if:
    \begin{enumerate}
        \item $\calK_1 \mix \calK_2$ is a polytope,
        \item $\calL_1 \mix \calL_2$ is a polytope, and
        \item $X(\calK_2 | \calK_1) \cap X(\calL_2 | \calL_1) \leq X(\calN_{2,1} | \calN_{1,1}) \cap X(\calN_{2,2} | \calN_{1,2})$.
    \end{enumerate}
    \end{corollary}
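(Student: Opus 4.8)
The plan is to apply \cref{thm:all-i-admissible-mix} and simplify each of its three conditions using the transitivity that $\overline I = \{0,n-1\}$ forces. First I would record the setup. Since $\overline I = \{0,n-1\}$ has two elements, $\calP_1$ and $\calP_2$ lie in class $2_{\{1,\dots,n-2\}}$ (or are regular), so by the discussion in \cref{sec:2-orbit} each of them is facet-transitive and vertex-transitive but not medial-section-transitive, and hence has at most two orbits of medial sections, of types $\calN_{i,1}$ (through the base flag) and $\calN_{i,2}$. I would also note that $I$-admissibility gives $\textbf{2}_I^n$-admissibility, so \cref{thm:all-i-admissible-mix} applies, and that $\calP_1 \mix \calP_2$ covers $\calP_1$ and hence $\textbf{2}_I^n$, so it is $I$-orientable and its flags inherit the product bicoloring.

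For conditions (1) and (2) of \cref{thm:all-i-admissible-mix}, the facet (resp.\ vertex-figure) of any flag $(\Psi_1,\Psi_2)$ of $\calP_1 \mix \calP_2$ is the mix of the facets (resp.\ vertex-figures) of $\Psi_1$ and $\Psi_2$. By facet- and vertex-transitivity of the $\calP_i$, the facet of $\Psi_i$ is isomorphic, as a rooted premaniplex, to $(\calK_i,\Phi_i)$ or to $(\calK_i,\Phi_i^0)$, and the vertex-figure of $\Psi_i$ to $(\calL_i,\Phi_i)$ or $(\calL_i,\Phi_i^{\,n-1})$; and since $(\Phi_1^0,\Phi_2^0) = (\Phi_1,\Phi_2)^0$ lies in the same connected component of the product as $(\Phi_1,\Phi_2)$ (similarly for $n-1$), these two choices of root give the same unrooted maniplex. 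Hence every facet of $\calP_1 \mix \calP_2$ is isomorphic to $\calK_1 \mix \calK_2$ and every vertex-figure to $\calL_1 \mix \calL_2$, so conditions (1) and (2) of the theorem are equivalent to conditions (1) and (2) of the corollary.

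The main work is in condition (3) of \cref{thm:all-i-admissible-mix}, which must hold for every flag $(\Psi_1,\Psi_2)$. By the transitivity just used, the facet and vertex-figure contributions $X(\calK_2 | \calK_1)$ and $X(\calL_2 | \calL_1)$ are the same for all flags, so only $X(\calN_2 | \calN_1)$ varies, equaling $X(\calN_{2,1} | \calN_{1,1})$ or $X(\calN_{2,2} | \calN_{1,2})$ according to the orbit type of the medial section of $(\Psi_1,\Psi_2)$. The key point to establish is that the two coordinates always agree on this orbit type: the medial section of $\Psi_i$ (the $\langle s_1,\dots,s_{n-2}\rangle$-orbit of $\Psi_i$) is of type $\calN_{i,1}$ precisely when $\Psi_i$ lies in the $\G(\calP_i)$-orbit of $\Phi_i$ — here one uses that $\langle s_1,\dots,s_{n-2}\rangle\Phi_i$ is contained in the $\G(\calP_i)$-orbit of $\Phi_i$ because $\calP_i$ is in class $2_{\{1,\dots,n-2\}}$ (and when $\calP_i$ is regular, the two medial-section orbits coincide, so there is nothing to track) — and for a class-$2_{\{1,\dots,n-2\}}$ premaniplex this says exactly that $\Psi_i$ carries the same $I$-color as $\Phi_i$. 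Since $\Psi_1$ and $\Psi_2$ share an $I$-color in the $I$-orientable mix, their medial sections are simultaneously of type $\calN_{\bullet,1}$ or of type $\calN_{\bullet,2}$, and both cases occur because $\calP_1 \mix \calP_2$ covers $\textbf{2}_I^n$ and so has flags of each $I$-color. Therefore condition (3) of \cref{thm:all-i-admissible-mix}, asked of all flags, is equivalent to the pair of inclusions $X(\calK_2 | \calK_1) \cap X(\calL_2 | \calL_1) \leq X(\calN_{2,j} | \calN_{1,j})$ for $j \in \{1,2\}$, i.e.\ to condition (3) of the corollary; combining the three equivalences finishes the proof.

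I expect the main obstacle to be the bookkeeping in the last paragraph: establishing cleanly that the orbit type of the medial section of $\Psi_i$ is governed by the $I$-color of $\Psi_i$ (so the two coordinates of a flag of the mix always present matching medial-section types), and that both types are genuinely realized among the flags of $\calP_1 \mix \calP_2$. Everything else — the reductions of (1) and (2), and viewing $X(\calK_2 | \calK_1)$, $X(\calL_2 | \calL_1)$, $X(\calN_{2,j} | \calN_{1,j})$ as the ambient subgroups appearing in \cref{thm:all-i-admissible-mix} — is routine once one keeps careful track of which of the two flag-orbits a given facet, vertex-figure, or medial section is rooted at.
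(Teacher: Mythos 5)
Your proof is correct and takes exactly the route the paper intends: the paper gives no separate proof of this corollary, saying only that it follows from \cref{thm:all-i-admissible-mix} by ``considering the possible facets, vertex-figures, and medial-sections of each possible flag'' of $\calP_1 \mix \calP_2$, together with the fact that $I$-admissible premaniplexes have at most two orbits of each. Your write-up is that argument carried out in detail, with the one genuinely non-obvious step --- that the shared $I$-colouring of the two coordinates of a flag of the mix synchronizes their medial-section orbit types, so that only the two ``diagonal'' groups $\VG{\calN_{2,j}}{\calN_{1,j}}$ ever arise and both actually occur --- correctly identified and justified.
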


    \begin{corollary}
    \label{cor:2_n-1}
    Suppose that $\calP_1$ is an $n$-polytope and $\calP_2$ is an $n$-premaniplex. Let $I = \{0, 1, \ldots, n-2\}$, and suppose that $\calP_1$ and $\calP_2$ are $I$-admissible. For $i \in \{1,2\}$, suppose that $\calP_i$ has vertex-figures isomorphic to $\calL_i$. Suppose that the base facet (resp. medial section) of $\calP_i$ is isomorphic to $\calK_{i,1}$ (resp. $\calN_{i,1})$, and that the other orbit of facet (resp. medial section) is isomorphic to $\calK_{i,2}$ (resp. $\calN_{i,2}$). Then $\calP_1 \mix \calP_2$ is a polytope if and only if:
    \begin{enumerate}
        \item $\calK_{1,1} \mix \calK_{2,1}$ and $\calK_{1,2} \mix \calK_{2,2}$ are polytopes,
        \item $\calL_1 \mix \calL_2$ is a polytope, and
        \item $X(\calK_{2,j} | \calK_{1,j}) \cap X(\calL_2 | \calL_1) \leq X(\calN_{2,j} | \calN_{1,j})$ for $j \in \{1,2\}$.
    \end{enumerate}
    \end{corollary}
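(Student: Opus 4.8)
The plan is to deduce this corollary from \cref{thm:all-i-admissible-mix}. Since $\calP_1$ and $\calP_2$ are $I$-admissible, they are $\textbf{2}_I$-admissible, so that theorem applies with $\calT=\textbf{2}_I$, and $\calP_1\mix\calP_2$ is $I$-admissible by \cref{coro:mixadmissible}; in particular the mix is vertex-transitive (since $0\in I$) and has at most two orbits of facets and of medial sections. I would then compute, for an arbitrary flag $(\Psi_1,\Psi_2)$ of $\calP_1\mix\calP_2$, the facet, vertex-figure, and medial section occurring at each coordinate, substitute the answers into the three conditions of \cref{thm:all-i-admissible-mix}, and check that, after quantifying over all flags, they become exactly conditions (1)--(3) of the corollary.

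The key structural input, recorded in \cref{sec:2-orbit} for the case $\bar I=\{n-1\}$, is that a premaniplex in class $2_I$ is vertex-transitive, has exactly two orbits of facets and two orbits of medial sections, and all of its facets and medial sections are regular. I would emphasize the reason these facet and medial-section orbits are ``aligned'' with the flag orbits: because $n-1$ is the only colour outside $I$, a path that stays inside a single facet, or inside a single medial section, never traverses an edge of colour $n-1$, so it stays within one colour class of the $I$-orientation two-colouring. Hence facets and medial sections are monochromatic, and for a flag $\Psi$ of $\calP_i$ the flag-orbit of $\Psi$, the orbit of its facet, and the orbit of its medial section all coincide and are pinned down by the colour $j\in\{1,2\}$ of $\Psi$, while the vertex-figure of $\Psi$ is always (a copy of) $\calL_i$. (If $\calP_i$ happens to be regular there is a single orbit of each, and one reads $\calK_{i,1}=\calK_{i,2}$, $\calN_{i,1}=\calN_{i,2}$.)

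Now let $(\Psi_1,\Psi_2)$ be a flag of $\calP_1\mix\calP_2$. The coverings $\calP_1\mix\calP_2\to\calP_i$ are compatible with the $I$-orientations (the covering onto $\textbf{2}_I^n$ factors through each $\calP_i$), so $\Psi_1$, $\Psi_2$, and $(\Psi_1,\Psi_2)$ all carry the same colour $j$; by the previous paragraph the facet of $\Psi_1$ is $\calK_{1,j}$ and that of $\Psi_2$ is $\calK_{2,j}$, the medial sections are $\calN_{1,j}$ and $\calN_{2,j}$, and the vertex-figures are copies of $\calL_1$ and $\calL_2$ --- and for the facets and medial sections the choice of base flag is irrelevant since these are regular. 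Using that the facet, respectively the vertex-figure, of $(\Psi_1,\Psi_2)$ in the mix is the mix of the facets, respectively vertex-figures, of its coordinates (as in the proofs of \cref{var-gp-as-paths-3} and \cref{facets-cover-polytopal}), condition (1) of \cref{thm:all-i-admissible-mix} becomes ``$\calK_{1,1}\mix\calK_{2,1}$ and $\calK_{1,2}\mix\calK_{2,2}$ are polytopes'', condition (2) becomes ``$\calL_1\mix\calL_2$ is a polytope'', and condition (3), quantified over flags of each colour $j$, becomes the pair of inclusions $X(\calK_{2,j}|\calK_{1,j})\cap X(\calL_2|\calL_1)\leq X(\calN_{2,j}|\calN_{1,j})$, $j\in\{1,2\}$. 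The step that needs genuine care --- and the main obstacle --- is this orbit bookkeeping: one must be certain that the facet (and the medial section) at the first coordinate of a flag of the mix lies in the orbit \emph{matching} the one at the second coordinate, so that only the ``diagonal'' variance groups occur and never a cross term such as $X(\calK_{2,2}|\calK_{1,1})$; this matching is precisely what the monochromaticity of facets and medial sections provides. One should also confirm that the choice of base flag in the vertex-figure $\calL_i$ (which, unlike the facets, need not be regular) affects neither whether $\calL_1\mix\calL_2$ is a polytope nor the subgroup $X(\calL_2|\calL_1)$ entering condition (3) --- a short check, since replacing the base flag by one in the opposite orbit conjugates the relevant flag-stabilizers uniformly.
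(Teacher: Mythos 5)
Your proposal is correct and follows exactly the route the paper intends: the paper states this corollary without a written proof, indicating only that it follows from \cref{thm:all-i-admissible-mix} by ``considering the possible facets, vertex-figures, and medial-sections of each possible flag of $\calP_1 \mix \calP_2$,'' which is precisely the orbit bookkeeping you carry out. Your attention to the two points of care --- that the facet and medial-section orbits at the two coordinates are matched by the common $I$-orientation colouring, and that the vertex-figure contributions are insensitive to the choice of base flag because the two roots are $(n-1)$-adjacent and hence determine the same vertex and the same path condition --- fills in exactly the details the paper leaves implicit.
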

 
    \begin{corollary}
    \label{cor:2_0}
    Suppose that $\calP_1$ is an $n$-polytope and $\calP_2$ is an $n$-premaniplex. Let $I = \{1, \ldots, n-1\}$, and suppose that $\calP_1$ and $\calP_2$ are $I$-admissible. For $i \in \{1,2\}$, suppose that $\calP_i$ has facets isomorphic to $\calK_i$. Suppose that the base vertex-figure (resp. medial section) of $\calP_i$ is isomorphic to $\calL_{i,1}$ (resp. $\calN_{i,1})$, and that the other orbit of vertex-figure (resp. medial section) is isomorphic to $\calL_{i,2}$ (resp. $\calN_{i,2}$). Then $\calP_1 \mix \calP_2$ is a polytope if and only if:
    \begin{enumerate}
        \item $\calK_1 \mix \calK_2$ is a polytope,
        \item $\calL_{1,1} \mix \calL_{2,1}$ and $\calL_{1,2} \mix \calL_{2,2}$ are polytopes, and
        \item $X(\calK_{2} | \calK_{1}) \cap X(\calL_{2,j} | \calL_{1,j}) \leq X(\calN_{2,j} | \calN_{1,j})$ for $j \in \{1,2\}$.
    \end{enumerate}
    \end{corollary}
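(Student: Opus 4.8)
The plan is to deduce this from \cref{thm:all-i-admissible-mix}, applied with $\calT = \textbf{2}_I^n$ — this is legitimate since being $I$-admissible is the same as being $\textbf{2}_I^n$-admissible, and by \cref{coro:mixadmissible} the mix $\calP_1 \mix \calP_2$ is then itself $\textbf{2}_I^n$-admissible. Since $\overline{I} = \{0\}$ and $n-1 \in I$, each $\calP_i$ is facet-transitive (so all of its facets are copies of $\calK_i$) and has at most two orbits of vertex-figures and at most two of medial sections, matching the notation in the statement. The real task is to translate the three conditions of \cref{thm:all-i-admissible-mix} into the three listed here, and for this I need to identify, for an arbitrary flag $(\Psi_1, \Psi_2)$ of $\calP_1 \mix \calP_2$, to which orbit the vertex-figure and the medial section of each coordinate $\Psi_i$ belong.

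The crucial observation is that these orbits are synchronized between the two coordinates. For an $I$-admissible premaniplex with $\overline{I} = \{0\}$, all flags inside a fixed vertex (a connected component obtained after deleting the $0$-colored edges) share the same $I$-color, since inside such a component only colors of $I$ are traversed; consequently, in the two-orbit case the $\G$-orbit of the vertex-figure, and likewise of the medial section, of a flag is determined by its $I$-color (because, for a class-$2_I$ premaniplex, the two $\G$-orbits of flags are exactly the two $I$-colors; in the regular case there is a single orbit of each, making the point vacuous), and the base choices $\calL_{i,1}$ and $\calN_{i,1}$ correspond to the $I$-color of the base flag $\Phi_i$. Moreover, the covering $\calP_1 \mix \calP_2 \to \textbf{2}_I^n$ coincides — by uniqueness of a covering given the image of one flag — with each projection $\calP_1 \mix \calP_2 \to \calP_i$ followed by $\calP_i \to \textbf{2}_I^n$; hence the $I$-colors of $(\Psi_1,\Psi_2)$, of $\Psi_1$, and of $\Psi_2$ all agree. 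Combining these facts, there is a single $j \in \{1,2\}$, depending only on the $I$-color of $(\Psi_1, \Psi_2)$, such that the vertex-figures of $\Psi_1$ and $\Psi_2$ are $\calL_{1,j}$ and $\calL_{2,j}$ and their medial sections are $\calN_{1,j}$ and $\calN_{2,j}$; and both values of $j$ actually occur (the base flag gives $j = 1$, the flag $(\Phi_1, \Phi_2)^0$ gives $j = 2$).

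With this in hand, and using that sections of a mix are the mixes of the corresponding sections (as already used in the proof of \cref{facets-cover-polytopal}), the facets of $\calP_1 \mix \calP_2$ are all copies of $\calK_1 \mix \calK_2$, its vertex-figures are exactly $\calL_{1,j} \mix \calL_{2,j}$ for $j \in \{1,2\}$, and condition~(3) of \cref{thm:all-i-admissible-mix} evaluated at a flag of $I$-color $j$ becomes $X(\calK_2 | \calK_1) \cap X(\calL_{2,j} | \calL_{1,j}) \leq X(\calN_{2,j} | \calN_{1,j})$; substituting these into the theorem yields precisely the three conditions in the statement. I expect the synchronization argument of the second paragraph to be the main obstacle: it is where the hypothesis $\overline{I} = \{0\}$ is genuinely used, and it is what lets the universally-quantified condition~(3) of the theorem collapse to just the two cases $j \in \{1,2\}$. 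A shorter alternative is to observe that this corollary is exactly the dual of \cref{cor:2_n-1}: applying that result to the $\{0,\dots,n-2\}$-admissible premaniplexes $\calP_1^\delta$ and $\calP_2^\delta$, and using that $(\calP_1 \mix \calP_2)^\delta \cong \calP_1^\delta \mix \calP_2^\delta$, that a premaniplex is polytopal if and only if its dual is, and that variance groups and the relevant containments between them are preserved under dualizing, yields the statement with the roles of facets and vertex-figures interchanged.
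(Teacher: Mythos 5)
Your proposal is correct and matches the paper's approach: the paper states this corollary without a separate proof, noting only that it follows from \cref{thm:all-i-admissible-mix} by "considering the possible facets, vertex-figures, and medial-sections of each possible flag of $\calP_1 \mix \calP_2$," which is precisely the synchronization-of-$I$-colors argument you spell out (and your duality shortcut via \cref{cor:2_n-1} is the same observation the paper makes implicitly in the proof of \cref{gen-src-poly}, where the fourth part "follows similarly" to the third by duality).
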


\subsection{Mixes of regular polytopes}

Regular polytopes are the most studied ones, and have the nice property that their automorphisms groups are easy to understand and handle. We present some discussion and results regarding the mix of regular polytopes. 
Let us start by expanding on \cite[Lemma 3.8]{faithful-thin}.

    \begin{proposition}
\label{i-double-poly}
Let $\calP$ be an $I$-non-orientable regular $n$-polytope. Suppose $\calP$ has facets $\calK$, vertex-figures $\calL$, and medial sections $\calN$, and let $\calQ$ be its $I$-double (see \cref{sec:2-orbit}). 
\begin{enumerate}[(a)]
    \item If $\calK$ is $I_{n-1}$-orientable or $\calL$ is $I_0$-orientable, then $\calQ$ is a polytope.
    \item If $\calK$ is $I_{n-1}$-non-orientable and $\calL$ is $I_0$-non-orientable, and if $\calN$ is $I_{0,n-1}$-orientable, then $\calQ$ is not a polytope.
    \item If $\calN$ is $I_{0,n-1}$-non-orientable, then $\calQ$ is a polytope if and only if the $I_{n-1}$-double of $\calK$ and the $I_0$-double of $\calL$ are polytopal.
\end{enumerate}
\end{proposition}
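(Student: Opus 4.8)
The plan is to identify $\calQ$ with the mix $\calP\mix\textbf{2}_I^n$ (as in the discussion following \cref{mix-with-pre}), apply \cref{thm:all-i-admissible-mix}, and then induct on $n$ to resolve the resulting conditions. Both $\calP$ (being regular) and $\textbf{2}_I^n$ (whose automorphism group is the order-$2$ group swapping its two flags, so it too is regular) are $\calT$-admissible for a common $\calT$, so \cref{thm:all-i-admissible-mix} applies. First I would record the data of the mix: the facets, vertex-figures, and medial sections of $\textbf{2}_I^n$ are $\textbf{2}_{I_{n-1}}^{n-1}$, $\textbf{2}_{I_0}^{n-1}$, and $\textbf{2}_{I_{0,n-1}}^{n-2}$ (using $I_{0,n-1}=(I_0)_{n-2}=(I_{n-1})_0$), while those of $\calP$ are all copies of $\calK$, $\calL$, $\calN$; hence every flag of $\calQ$ has the same associated data and the third condition of \cref{thm:all-i-admissible-mix} becomes a single containment. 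The facets of $\calQ$ are $\calK\mix\textbf{2}_{I_{n-1}}^{n-1}$, which is $\calK$ itself when $\calK$ is $I_{n-1}$-orientable and the $I_{n-1}$-double of $\calK$ otherwise, and similarly the vertex-figures of $\calQ$ are $\calL$ or the $I_0$-double of $\calL$.

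Next I would analyse the variance groups. For any $J$-non-orientable premaniplex $\calR$ of the appropriate rank, the stabilizer of a flag of $\textbf{2}_J^m$ has index $2$ in $W_m$, so $\VG{\textbf{2}_J^m}{\calR}$ is all of $\G(\textbf{2}_J^m)\cong\ZZ_2$; if instead $\calR$ is $J$-orientable then $\calR$ covers $\textbf{2}_J^m$ and $\VG{\textbf{2}_J^m}{\calR}$ is trivial by \cref{trivial-var-gp}. By \cref{var-gp-as-paths-3} (with coordinates switched, as in the paragraph after it), the groups $\VG{\textbf{2}_{I_{n-1}}^{n-1}}{\calK}$, $\VG{\textbf{2}_{I_0}^{n-1}}{\calL}$, and $\VG{\textbf{2}_{I_{0,n-1}}^{n-2}}{\calN}$ are all subgroups of $\G(\textbf{2}_I^n)\cong\ZZ_2$; since $\ZZ_2$ has a unique nontrivial subgroup, the third condition of \cref{thm:all-i-admissible-mix} holds if and only if $\calK$ is $I_{n-1}$-orientable, or $\calL$ is $I_0$-orientable, or $\calN$ is $I_{0,n-1}$-non-orientable.

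With these observations, parts (b) and (c) are short. For (b): the hypotheses make $\VG{\textbf{2}_{I_{n-1}}^{n-1}}{\calK}$ and $\VG{\textbf{2}_{I_0}^{n-1}}{\calL}$ nontrivial and $\VG{\textbf{2}_{I_{0,n-1}}^{n-2}}{\calN}$ trivial, so the third condition of \cref{thm:all-i-admissible-mix} fails and $\calQ$ is not a polytope. For (c): the contrapositive of \cref{prop:i-or-facets}, applied to the facet $\calK$ and to the vertex-figure $\calL$ (recalling that $\calN$ is the vertex-figure of $\calK$ and also the facet of $\calL$), forces $\calK$ to be $I_{n-1}$-non-orientable and $\calL$ to be $I_0$-non-orientable; then the third condition of \cref{thm:all-i-admissible-mix} is automatic, the facets of $\calQ$ are exactly the $I_{n-1}$-double of $\calK$, and the vertex-figures of $\calQ$ are exactly the $I_0$-double of $\calL$, which is precisely the claimed equivalence.

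For (a) I would induct on $n$, the base case $n=2$ being immediate since every $2$-maniplex is a polytope. Using the duality $(\calP\mix\textbf{2}_I^n)^\delta\cong\calP^\delta\mix\textbf{2}_{I^\delta}^n$, under which the two disjuncts ``$\calK$ is $I_{n-1}$-orientable'' and ``$\calL$ is $I_0$-orientable'' are exchanged (the facet of $\calP^\delta$ being $\calL^\delta$), it suffices to treat the case that $\calK$ is $I_{n-1}$-orientable. Then the facets of $\calQ$ equal $\calK$, a polytope; the third condition of \cref{thm:all-i-admissible-mix} holds since $\VG{\textbf{2}_{I_{n-1}}^{n-1}}{\calK}$ is trivial; and it remains to show that the vertex-figures $\calL\mix\textbf{2}_{I_0}^{n-1}$ of $\calQ$ are polytopal. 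If $\calL$ is $I_0$-orientable this mix is just $\calL$; otherwise \cref{prop:i-or-facets} applied to $\calK$ shows its vertex-figures $\calN$ are $(I_{n-1})_0$-orientable, i.e.\ $(I_0)_{n-2}$-orientable, so the facets of $\calL$ are $(I_0)_{n-2}$-orientable and the inductive hypothesis for (a) applied to $(\calL,I_0)$ shows the $I_0$-double of $\calL$ is a polytope. The step I expect to be the main obstacle is the bookkeeping around this third condition: tracking the operations $I_{n-1}$, $I_0$, $I_{0,n-1}$ correctly through the facet/vertex-figure/medial-section relations, and justifying cleanly that the three variance groups there all sit inside the single order-$2$ group $\G(\textbf{2}_I^n)$ so that their intersection is meaningful; once those are settled, (a)--(c) follow as above.
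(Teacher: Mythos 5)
Your proposal is correct and follows essentially the same route as the paper: identify $\calQ$ with $\calP\mix\textbf{2}_I^n$, apply the main polytopality theorem, observe that all three variance groups live in $\G(\textbf{2}_I^n)\cong\ZZ_2$ so the containment condition reduces to the stated orientability dichotomies, and induct on rank to handle the vertex-figures in part (a). The only (harmless) differences are that you use duality to halve the casework in (a) where the paper invokes a dual argument for the symmetric case, and you make explicit in (c) that $I_{0,n-1}$-non-orientability of $\calN$ forces $\calK$ and $\calL$ to be non-orientable, which the paper leaves implicit.
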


\begin{proof}
Let $\calP_1 = \calP$ and $\calP_2 = \textbf{2}_I^n$. Then $\calQ = \calP_1 \mix \calP_2$. The facets of $\calQ$ are $\calK' = \calK \mix \textbf{2}_{I_{n-1}}^{n-1}$, the vertex-figures are $\calL' = \calL \mix \textbf{2}_{I_0}^{n-1}$, and the medial sections are $\calN' = \calN \mix \textbf{2}_{I_{0,n-1}}^{n-2}$. Note that the premaniplex $\textbf{2}_I^n$ is regular, so that we can apply \cref{cor:med-sec-trans-I} to $\calQ$.

To prove part (a), first suppose that $\calK$ is $I_{n-1}$-orientable and $\calL$ is $I_0$-orientable. Then $\calK' = \calK$ and $\calL' = \calL$, so $\calQ$ has polytopal facets and vertex-figures. Furthermore, since $\calK$ is $I_{n-1}$-orientable, it follows that it covers $\textbf{2}_{I_{n-1}}^{n-1}$, and so by \cref{trivial-var-gp}, $X(\calK_2 | \calK_1)$ is trivial (since here $\calK_2 = \textbf{2}_{I_{n-1}}^{n-1}$ and $\calK_1 = \calK$). Then by \cref{trivial-intersection}, $\calQ$ is a polytope.

Now let us suppose that $\calK$ is $I_{n-1}$-orientable and $\calL$ is $I_0$-non-orientable. As in the previous case, the fact that $\calK$ is $I_{n-1}$-orientable implies that $X(\calK_2 | \calK_1)$ is trivial. The facets of $\calQ$ are isomorphic to $\calK$, which is a polytope. Thus, by \cref{trivial-intersection}, it only remains to show that the vertex-figure $\calL'$ of $\calQ$ is a polytope. This is clearly true if $n = 3$. Now suppose that part (a) is true up to rank $k$ and let $n = k+1$. Let $J = I_0$, so that $\calL$ is $J$-non-orientable. The fact that $\calK$ is $I_{n-1}$-orientable implies that the facets of $\calL$ are $I_{0,n-1} = J_{k-1}$-orientable. If the vertex-figures of $\calL$ are $J_0$-orientable, then the previous case proves that $\calL$ is polytopal. Otherwise, $\calL$ is a $k$-polytope that is $J$-non-orientable and with $J_{k-1}$-orientable facets and $J_0$-non-orientable vertex-figures, and by inductive hypothesis, $\calL$ is polytopal. So in any case, $\calL$ is a polytope, and so is $\calQ$. If we instead assume that $\calK$ is $I_{n-1}$-non-orientable and $\calL$ is $I_0$-orientable, then the results follows from a dual argument. 

Next we consider part (b). 
Similar to the cases above, if $\calN$ is $I_{0,n-1}$-orientable, then $X(\calN_2 | \calN_1)$ is trivial. If $\calK$ is $I_{n-1}$-non-orientable and $\calL$ is $I_0$-non-orientable, then $X(\calK_2 | \calK_1)$ is nontrivial, and thus is equal to $\G(\calP_2)$ of order 2. Similarly,  $X(\calL_2 | \calL_1) = \G(\calP_2)$, and so $X(\calK_2 | \calK_1) \cap X(\calL_2 | \calL_1)$ is nontrivial. Thus the third condition of \cref{cor:med-sec-trans-I} is violated, proving that $\calQ$ is not polytopal.

Finally, we prove part (c). If the medial sections of $\calP$ are $I_{0,n-1}$-non-orientable, then $X(\calN_2 | \calN_1)$ is nontrivial and thus all of $\G(\calP_2)$, and so the third condition of \cref{cor:med-sec-trans-I} is satisfied, giving the last result.
\end{proof}

Note that the above result cannot be easily generalized by replacing the regularity condition with a $\calT$-admissibility condition. This is because, in order to use the results in this section, we would need $\calP_2$ (which is $\textbf{2}_I^n$) to be $\calT$-admissible, but this is never the case unless $\calT = \textbf{2}_I^n$, or consists of a single vertex and $n$ semi-edges (in which case  $\calT$-admissibility is equivalent to regularity).

 Let us consider a concrete example to showcase how \cref{thm:all-i-admissible-mix} and its corollaries are useful. Suppose that $\calP_1$ is a regular $4$-polytope with facets $\calK_1$ of type $\{4,4\}$ and that $\calP_2$ is a regular $4$-polytope with facets $\calK_2$ of type $\{4,4\}$. (That is, the facets and vertex-figures both have quadrilateral facets and $4$-valent vertices.) By \cref{mix-4-polytopes}, we need only show that $\VGKb \cap \VGLb \leq \VGNb$. Let $\G(\calP_2) = \langle \rho_0, \rho_1, \rho_2, \rho_3 \rangle$. Then $\VGKb$ is a subgroup of $\G(\calP_2)$; in fact, it is a subgroup of $\langle \rho_0, \rho_1, \rho_2 \rangle$. Similarly, $\VGLb$ is a subgroup of $\langle \rho_1, \rho_2, \rho_3 \rangle$. Thus, $\VGKb \cap \VGLb$ is a subgroup of 
    \[ \langle \rho_0, \rho_1, \rho_2 \rangle \cap \langle \rho_1, \rho_2, \rho_3 \rangle, \]
    which equals $\langle \rho_1, \rho_2 \rangle$ since $\calP_2$ is a polytope. Now, every regular polyhedron of type $\{4,4\}$ covers the regular maniplex $\{4,4\}_{(1,0)}$. Then the order of the image of $\rho_1 \rho_2$ is still $4$ in this quotient, and it follows that no element of $\langle \rho_1, \rho_2 \rangle$ is in $\VGKb$. By the above discussion, this implies that $\VGKb \cap \VGLb$ is trivial, and so $\calP_1 \mix \calP_2$ is a polytope. A similar argument works if $\calP_1$ and $\calP_2$ are both rotary (chiral or regular) with any facets of type $\{4,4\}$. Note that we have not used any information about the vertex-figures of $\calP_1$ and $\calP_2$.

    More generally, if the facets $\calK_1$ and $\calK_2$ both cover some polytope $\calK_3$ with the same vertex-figures as $\calK_2$, then a similar argument works:

    \begin{proposition} \label{compatible-facets}
    Suppose that $\calP_1$ and $\calP_2$ are regular $n$-polytopes. Let $\calP_i$ have facets $\calK_i$ with $i \in \{1,2\}$. If there is a regular $(n-1)$-premaniplex $\calK_3$ such that $\calK_1$ and $\calK_2$ both cover $\calK_3$, and if the vertex-figures of $\calK_3$ are isomorphic to the vertex-figures of $\calK_2$, then $\calP_1 \mix \calP_2$ is a polytope.
    \end{proposition}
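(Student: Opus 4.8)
The plan is to argue by induction on $n$, reducing everything to the three conditions of \cref{cor:med-sec-trans-I}. Since $\calP_1$ and $\calP_2$ are regular, they are medial-section-transitive and both $\calT$-admissible for $\calT$ the premaniplex consisting of a single vertex with $n$ semi-edges, so \cref{cor:med-sec-trans-I} applies to $\calP_1 \mix \calP_2$. Write $\G(\calP_2) = \langle \rho_0, \ldots, \rho_{n-1} \rangle$, let $\calK_i$, $\calL_i$, $\calN_i$ be the facets, vertex-figures, and medial sections of $\calP_i$ (recalling $\calN_i$ is simultaneously the vertex-figure of $\calK_i$ and a facet of $\calL_i$), and let $\calN_3$ be the common vertex-figure of $\calK_3$, so $\calN_3 \cong \calN_2$ by hypothesis. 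For $n \le 2$ there is nothing to prove, since a mix of polygons is a polygon; assume $n \ge 3$.

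For the first two conditions of \cref{cor:med-sec-trans-I}, I would show that $\calK_1 \mix \calK_2$ and $\calL_1 \mix \calL_2$ are polytopes. For $\calK_1 \mix \calK_2$, apply the inductive hypothesis to $\calK_1$ and $\calK_2$ with the common cover $\calK_3' := \calK_3/F_{n-2}$, a facet of $\calK_3$: the facets of $\calK_1$ and of $\calK_2$ both cover $\calK_3'$ because $\calK_i$ covers $\calK_3$, and the vertex-figures of $\calK_3'$ are the medial sections of $\calK_3$, hence the vertex-figures of $\calN_3 \cong \calN_2$, which are exactly the vertex-figures of the facets of $\calK_2$; so the hypotheses of the proposition hold one rank down. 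For $\calL_1 \mix \calL_2$, note that every facet of $\calL_1$ is isomorphic to $\calN_1 = X_{\wedge}$-nothing, i.e. to the vertex-figure of $\calK_1$, which covers the vertex-figure $\calN_3 \cong \calN_2$ of $\calK_3$; thus every facet of $\calL_1$ covers every facet of $\calL_2$, and \cref{facets-cover-polytopal}, applied to the regular polytope $\calL_1$ and the regular premaniplex $\calL_2$, shows that $\calL_1 \mix \calL_2$ is a polytope. (Equivalently, this is again an instance of the proposition, with common cover $\calN_2$.) I expect this step to be mostly bookkeeping with the lattice of sections, verifying that the hypothesis on $\calK_3$ propagates correctly.

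The heart of the matter, and the step I expect to be the main obstacle, is the third condition, namely $X(\calK_2 \mid \calK_1) \cap X(\calL_2 \mid \calL_1) \le X(\calN_2 \mid \calN_1)$ at each flag of $\calP_1 \mix \calP_2$; by regularity it suffices to check it at the base flag. By \cref{var-gp-as-paths-3}, $X(\calK_2 \mid \calK_1)$ sits inside $\G(\calK_2) = \langle \rho_0, \ldots, \rho_{n-2} \rangle$ and $X(\calL_2 \mid \calL_1)$ inside $\G(\calL_2) = \langle \rho_1, \ldots, \rho_{n-1} \rangle$, so since $\calP_2$ is a polytope the intersection lies in $\langle \rho_0, \ldots, \rho_{n-2} \rangle \cap \langle \rho_1, \ldots, \rho_{n-1} \rangle = \langle \rho_1, \ldots, \rho_{n-2} \rangle = \G(\calN_2)$. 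On the other hand, writing $M_i = \Stab_{W_{n-1}}(\Phi_{\calK_i})$, we have $X(\calK_2 \mid \calK_1) = M_1 M_2 / M_2$ as a subgroup of $\G(\calK_2) = W_{n-1}/M_2$; since $\calK_1$ and $\calK_2$ both cover $\calK_3$ we get $M_1, M_2 \le M_3$, hence $M_1 M_2 \le M_3$ and so $X(\calK_2 \mid \calK_1) \le M_3/M_2 = \ker\bigl(\phi\colon \G(\calK_2) \to \G(\calK_3)\bigr)$, the covering homomorphism. Therefore the intersection lies in $\G(\calN_2) \cap \ker\phi$, which is the kernel of the restriction $\phi|_{\langle \rho_1, \ldots, \rho_{n-2}\rangle}$; but this restriction maps onto $\langle \rho_1^{(3)}, \ldots, \rho_{n-2}^{(3)}\rangle = \G(\calN_3)$ and is precisely the homomorphism induced by the covering $\calN_2 \to \calN_3$ of vertex-figures. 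Since $\calN_2$ and $\calN_3$ are isomorphic regular premaniplexes, their flag-stabilizers in $W_{n-2}$ are equal normal subgroups, so that covering is an isomorphism and $\phi|_{\G(\calN_2)}$ is injective. Hence $X(\calK_2 \mid \calK_1) \cap X(\calL_2 \mid \calL_1)$ is trivial, a fortiori contained in $X(\calN_2 \mid \calN_1)$, and \cref{cor:med-sec-trans-I} closes the induction. The only subtle points here are the identification of $\G(\calN_2)$ with $\langle \rho_1, \ldots, \rho_{n-2}\rangle$ inside $\G(\calK_2)$ and the compatibility of the abstract isomorphism $\calN_2 \cong \calN_3$ with the generator-preserving covering $\phi$ — both of which rest on the regularity hypothesis.
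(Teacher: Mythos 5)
Your proof is correct and follows essentially the same route as the paper: reduce to \cref{cor:med-sec-trans-I}, use the intersection condition in $\G(\calP_2)$ to place $X(\calK_2 | \calK_1) \cap X(\calL_2 | \calL_1)$ inside $\langle \rho_1, \ldots, \rho_{n-2} \rangle$, and use the hypothesis on $\calK_3$ to show that $X(\calK_2 | \calK_1)$ meets that subgroup trivially (your stabilizer computation just makes explicit what the paper asserts informally). The only real difference is cosmetic: the paper disposes of $\calK_1 \mix \calK_2$ directly via the dual of \cref{facets-cover-polytopal}, since the vertex-figures of $\calK_1$ cover those of $\calK_3$, which are those of $\calK_2$, so your induction on $n$ (and the garbled ``$X_{\wedge}$-nothing'' aside) is avoidable.
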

    
    \begin{proof}
    Let $\G(\calP_2) = \langle \rho_0, \ldots, \rho_{n-1} \rangle$. Then $\VGKb$ is a subgroup of $\langle \rho_0, \ldots, \rho_{n-2} \rangle$ and $\VGLb$ is a subgroup of $\langle \rho_1, \ldots, \rho_{n-1} \rangle$. Thus $\VGKb \cap \VGLb$ is a subgroup of
    \[ \langle \rho_0, \ldots, \rho_{n-2} \rangle \cap \langle \rho_1, \ldots, \rho_{n-1} \rangle, \]
    which is equal to $\langle \rho_1, \ldots, \rho_{n-2} \rangle$ since $\calP_2$ is a polytope. Now, $\VGKb$ consists of those elements of $\G(\calK_2)$ that we need to collapse in order to get a quotient of $\G(\calK_1)$, and since $\calK_2$ and $\calK_1$ both cover $\calK_3$, which has the same vertex-figures as $\calK_2$, that implies that no nontrivial element of $\langle \rho_1, \ldots, \rho_{n-2} \rangle$ is in $\VGKb$. Since $\VGKb \cap \VGLb$ is contained in $\langle \rho_1, \ldots, \rho_{n-2} \rangle$, this implies that $\VGKb \cap \VGLb$ is trivial, and thus contained in $\VGNb$.

    Now, the vertex-figures of $\calK_1$ cover those of $\calK_3$, which are the same as those of $\calK_2$. Since $\calK_1$ is a polytope (being the facet of a polytope), we can appeal to the dual of \cref{facets-cover-polytopal} to see that $\calK_1 \mix \calK_2$ is a polytope. Similarly, letting the vertex-figures of $\calP_i$ be $\calL_i$ for $i \in \{1,2\}$, we have that the facets of $\calL_1$ cover the facets of $\calL_2$, and so again by \cref{facets-cover-polytopal}, $\calL_1 \mix \calL_2$ is a polytope. Thus, $\calP_1 \mix \calP_2$ is a polytope, by \cref{cor:med-sec-trans-I}.
    \end{proof}

    In light of \cref{compatible-facets}, the example from before could be extended to higher rank by letting the facets both be cubic toroids (of type $\{4, 3^{n-2}, 4\}$), in which case they both cover $\{4, 3^{n-2}, 4\}_{(1, 0^{n-3})}$ (see \cite[Section 6D]{arp}). 

    We expect that an analogue of \cref{compatible-facets} holds for other classes of highly-symmetric polytopes, but the way we proved it for regular polytopes depended on knowing the appropriate intersection condition, so we do not develop that further here.

\subsection{Smallest regular covers of two-orbit polytopes}

Little is known about the smallest regular covers of two-orbit polytopes that are not chiral. Here we present a generalization of \cite[Theorem 3.1]{chiral-covers}. 
 (Recall that for a two-orbit premaniplex, we use $\overline{\calP}$ to mean $\calP$ but with a base flag in the other orbit.)
    \begin{corollary}
    \label{gen-src-poly}
    Let $\calP$ be an $n$-polytope in class $2_I$, with base facet, vertex-figure, and medial section isomorphic to $\calK$, $\calL$, and $\calN$, respectively. 
    \begin{enumerate}
        \item If $\calP$ is medial-section-transitive, then the smallest regular cover of $\calP$ is a polytope if and only if $\calK \mix \overline{\calK}$ and $\calL \mix \overline{\calL}$ are both polytopes and
        $\VG{\overline{\calK}}{\calK} \cap \VG{\overline{\calL}}{\calL} \leq \VG{\overline{\calN}}{\calN}.$ 

        \item Suppose that $I = \{1, \ldots, n-2\}$. Let the base medial section of $\overline{\calP}$ be isomorphic to $\calN'$. Then the smallest regular cover of $\calP$ is a polytope if and only if $\calK \mix \overline{\calK}$ and $\calL \mix \overline{\calL}$ are both polytopes and $\VG{\overline{\calK}}{\calK} \cap \VG{\overline{\calL}}{\calL} \leq \VG{\calN}{\calN'} \cap \VG{\calN'}{\calN}.$ 

        \item Suppose that $I = \{0, 1, \ldots, n-2\}$. Let the base facet (resp. medial section) of $\overline{\calP}$ be isomorphic to $\calK'$ (resp. $\calN'$). Then the smallest regular cover of $\calP$ is a polytope if and only if $\calK \mix \calK'$ and $\calL \mix \overline{\calL}$ are both polytopes, $\VG{\calK'}{\calK} \cap \VG{\overline{\calL}}{\calL} \leq \VG{\calN'}{\calN}$, and $\VG{\calK}{\calK'} \cap \VG{\overline{\calL}}{\calL} \leq \VG{\calN}{\calN'}$. 
        
        \item Suppose that $I = \{1, \ldots, n-1\}$. Let the base vertex-figure (resp. medial section) of $\overline{\calP}$ be isomorphic to $\calL'$ (resp. $\calN'$). Then the smallest regular cover of $\calP$ is a polytope if and only if $\calK \mix \overline{\calK}$ and $\calL \mix \calL'$ are both polytopes, $\VG{\overline{\calK}}{\calK} \cap \VG{\calL'}{\calL} \leq \VG{\calN'}{\calN}$, and $\VG{\overline{\calK}}{\calK} \cap \VG{\calL}{\calL'} \leq \VG{\calN}{\calN'}$ 
        \end{enumerate}
    \end{corollary}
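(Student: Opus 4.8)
The plan is to derive every part from one of the corollaries of \cref{thm:all-i-admissible-mix} by first identifying the smallest regular cover of $\calP$ as a mix. The preliminary fact I would establish is this: if $\calP$ is in class $2_I$ with flag-stabilizer $N=\Stab_{W_n}(\Phi)$, then $N\lhd W_n^I$ and $[W_n:W_n^I]=2$, so the largest normal subgroup of $W_n$ contained in $N$ — i.e.\ the flag-stabilizer of the smallest regular cover of $\calP$ — is $N\cap N^{r_j}$ for any $j\notin I$ (conjugation by $r_j$ is an automorphism of $W_n$ fixing $W_n^I$ setwise, so $N^{r_j}$ is normal in $W_n^I$, and $\{1,r_j\}$ is a transversal of $W_n^I$). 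Since $N^{r_j}=\Stab_{W_n}(\Phi^j)$ and $\Phi^j$ is a base flag of $\overline{\calP}$, the computation $\Stab_{W_n}(\Phi_1,\Phi_2)=N_1\cap N_2$ from \cref{sec:mixing} shows that the smallest regular cover of $\calP$ is precisely $\calP\mix\overline{\calP}$ — the natural extension of the chiral case noted in \cref{sec:var-gps}. Being in class $2_I$, both $\calP$ and $\overline{\calP}$ are $\mathbf{2}_I^n$-admissible, so each of the corollaries in this subsection applies with $\calP_1=\calP$ and $\calP_2=\overline{\calP}$.

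For part (1), $\calP$ (hence also $\overline{\calP}$) is medial-section-transitive by hypothesis; by the classification of two-orbit premaniplexes in \cref{sec:2-orbit} this forces $I\notin\{\{0,\dots,n-2\},\{1,\dots,n-1\}\}$, so $\calP$ is also facet- and vertex-transitive. The conventions at the end of \cref{sec:2-orbit} then give that the base facet, vertex-figure, and medial section of $\overline{\calP}$ are $\overline{\calK}$, $\overline{\calL}$, and $\overline{\calN}$, and substituting $\calK_1=\calK,\calK_2=\overline{\calK},\calL_1=\calL,\calL_2=\overline{\calL},\calN_1=\calN,\calN_2=\overline{\calN}$ into \cref{cor:med-sec-trans-I} reproduces exactly the claimed criterion.

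For parts (2)--(4), in which $\overline{I}\in\{\{0,n-1\},\{n-1\},\{0\}\}$ and $\calP$ fails to be medial-section-transitive, I would apply \cref{cor:2_0n-1}, \cref{cor:2_n-1}, and \cref{cor:2_0} respectively. The content is bookkeeping: for $\calP_2=\overline{\calP}$, whose base flag is $\Phi^j$ with $j\notin I$, one must decide which section occupies the ``base'' slot and which the ``other orbit'' slot of the relevant corollary, using that $j$-adjacency interchanges the two flag orbits (and hence swaps the two orbits of whichever type of section $\calP$ is not transitive on). In part (3), for instance ($I=\{0,\dots,n-2\}$, $j=n-1$), $\calP$ is vertex-transitive but has facet orbits $\calK,\calK'$ and medial-section orbits $\calN,\calN'$; since $(n-1)$-adjacency swaps orbits, the base facet of $\overline{\calP}$ is $\calK'$ with other orbit $\calK$, its base medial section is $\calN'$ with other orbit $\calN$, and its vertex-figure is $\overline{\calL}$. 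Feeding $(\calK_{1,1},\calK_{1,2})=(\calK,\calK')$, $(\calK_{2,1},\calK_{2,2})=(\calK',\calK)$, $(\calN_{1,1},\calN_{1,2})=(\calN,\calN')$, $(\calN_{2,1},\calN_{2,2})=(\calN',\calN)$, $\calL_1=\calL$, $\calL_2=\overline{\calL}$ into \cref{cor:2_n-1} yields the two stated inclusions, and the isomorphism $\calK\mix\calK'\cong\calK'\mix\calK$ collapses its first condition to ``$\calK\mix\calK'$ is a polytope''. Parts (2) and (4) go the same way: in part (2) $\calP$ is facet- and vertex-transitive with two medial-section orbits, and in part (4) it is facet-transitive with two vertex-figure orbits and two medial-section orbits, so \cref{cor:2_0n-1} and \cref{cor:2_0} give the listed criteria once the base/other-orbit assignment for $\overline{\calP}$ is made.

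I expect the only real difficulty to be organizational: carefully matching the two orbits of facets, vertex-figures, and medial sections of $\calP$ — as carried by the flags of $\calP\mix\overline{\calP}$ — against the labels $\calK_{i,j},\calL_{i,j},\calN_{i,j}$ in the corollaries, and keeping the variance-group arguments in the correct order. In particular the section coming from $\overline{\calP}$ always plays the role of $\calM_2$, consistent with the convention of \cref{thm:all-i-admissible-mix} that the polytope comes first, so that the groups which appear are $\VG{\overline{\calK}}{\calK}$, $\VG{\calK'}{\calK}$, $\VG{\calN'}{\calN}$, and so on. Once that dictionary is fixed, each part is an immediate instance of the corresponding corollary.
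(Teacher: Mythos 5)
Your proposal is correct and follows essentially the same route as the paper: identify the smallest regular cover as $\calP \mix \overline{\calP}$, then apply \cref{cor:med-sec-trans-I}, \cref{cor:2_0n-1}, \cref{cor:2_n-1}, and \cref{cor:2_0} with the base/other-orbit sections of $\overline{\calP}$ matched via the flip of flag orbits, using $\calK \mix \calK' \cong \calK' \mix \calK$ to collapse the facet conditions in parts (3) and (4). The only difference is that you spell out the core-of-the-stabilizer argument for why the smallest regular cover is $\calP \mix \overline{\calP}$, which the paper simply asserts.
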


    \begin{proof}
    First, note that in every case, the smallest regular cover of $\calP$ is $\calP \mix \overline{\calP}$. Whenever $\calP$ is facet-transitive, the base facet of $\overline{\calP}$ is $\overline{\calK}$, and similarly for vertex-figures and medial sections. The first two parts then follow directly from \cref{cor:med-sec-trans-I,cor:2_0n-1}.

    For the third part, observe that in the notation of \cref{cor:2_n-1}, $\calK_{1,1} = \calK_{2,2} = \calK$ and $\calK_{1,2} = \calK_{2,1} = \calK'$. Then the result follows from the observation that $\calK \mix \calK' \cong \calK' \mix \calK$. The fourth part follows similarly.
    \end{proof}

    Note that \cite[Corollary 6.5]{mixing-and-monodromy} already says that if the facets (or vertex-figures) of a two-orbit polytope $\calP$ are all isomorphic to the same regular polytope, then the smallest regular cover of $\calP$ is a polytope. Indeed, in this case $X(\overline{\calK}|\calK)$ is trivial, thus making our condition on variance groups trivially satisfied.



    We note that for some choices of subset $I$, the facets of $\calP \mix \overline{\calP}$ are automatically polytopes, and so there are fewer conditions to check in \cref{gen-src-poly}.
    
    \begin{proposition}
    \label{2I-facets-polytopal}
    Let $\calP$ be an $n$-polytope in class $2_I$ with $n-1 \not \in I$ and $|I| < n-1$. If the facets of $\calP$ are isomorphic to $\calK$, then $\calK \mix \overline{\calK}$ is a polytope.
    \end{proposition}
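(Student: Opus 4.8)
The plan is to reduce the statement to \cref{facets-cover-polytopal}: I will show that the facet $\calK$ is either facet-transitive or vertex-transitive, so that in $\calK\mix\overline{\calK}$ the facets (resp. vertex-figures) of the two coordinates of any flag are forced to be isomorphic. First, since $|I|<n-1$ we have $I\neq\{0,\dots,n-2\}$, so $\calP$ is facet-transitive; hence all facets of $\calP$ are isomorphic to $\calK$, and the base facet of $\overline{\calP}$ is $\overline{\calK}$. By \cref{prop:i-or-facets} the facet $\calK$ is $I_{n-1}$-admissible, and since $n-1\notin I$ we have $I_{n-1}=I$, so $\calK$ is $I$-admissible; that is, $\calK$ is either regular and $I$-orientable or it is in class $2_I^{n-1}$. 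In either case both $\calK$ and $\overline{\calK}$ are $\textbf{2}_I^{n-1}$-admissible, so that \cref{facets-cover-polytopal} is applicable to the pair $(\calK,\overline{\calK})$.

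The key observation is that a two-orbit $(n-1)$-premaniplex in class $2_I^{n-1}$ fails to be facet-transitive only when $I=\{0,\dots,n-3\}$, and fails to be vertex-transitive only when $I=\{1,\dots,n-2\}$; as these two subsets of $\{0,\dots,n-2\}$ are distinct, $\calK$ (whether regular or in class $2_I^{n-1}$) is facet-transitive or vertex-transitive. If $\calK$ is facet-transitive, then every facet of $\calK$, and hence every facet of $\overline{\calK}$, is isomorphic to the unique facet type of $\calK$; thus for every flag $(\Psi_1,\Psi_2)$ of $\calK\mix\overline{\calK}$ the facet of $\Psi_1$ is isomorphic to, and therefore covers, the facet of $\Psi_2$, and \cref{facets-cover-polytopal} with $\calP_1=\calK$, $\calP_2=\overline{\calK}$ gives that $\calK\mix\overline{\calK}$ is a polytope. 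If $\calK$ is not facet-transitive, then it is vertex-transitive, and applying the same reasoning to $\calK^\delta$ and $\overline{\calK^\delta}=(\overline{\calK})^\delta$ (whose facets are the pairwise isomorphic vertex-figures of $\calK$), together with the fact that the mix commutes with duality, a dual argument shows $(\calK\mix\overline{\calK})^\delta$ is a polytope, hence so is $\calK\mix\overline{\calK}$.

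The only points requiring care are bookkeeping: that $\calK$ and $\overline{\calK}$ are admissible with respect to a common rooted premaniplex, so that \cref{facets-cover-polytopal} genuinely applies; that $\overline{\calK^\delta}=(\overline{\calK})^\delta$ and that mixing commutes with the dual, legitimizing the dual argument; and the dichotomy recalled above. I do not expect a serious obstacle: the real content is just that $|I|<n-1$ (equivalently $I\neq\{0,\dots,n-2\}$) forces at least one of facet- or vertex-transitivity onto $\calK$, which is precisely what makes the hypotheses of \cref{facets-cover-polytopal} (or its dual) available.
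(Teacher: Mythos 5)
There is a genuine gap at the step ``the facet of $\Psi_1$ is isomorphic to, and therefore covers, the facet of $\Psi_2$.'' The hypothesis of \cref{facets-cover-polytopal} is a \emph{rooted} covering: its proof passes through \cref{trivial-var-gp}, so what is actually needed is that $X(\calK_2\mid\calK_1)$ be trivial for the facets rooted at $\Psi_1$ and $\Psi_2$ respectively, i.e.\ $\Stab(\Psi_1)\leq\Stab(\Psi_2)$ inside the relevant parabolic subgroup. An unrooted isomorphism between the two facets does not give this: if the common facet type is, say, a chiral polyhedron and $\Psi_1$, $\Psi_2$ sit in opposite flag orbits of it, the rooted facets are $F$ and $\overline{F}$ and the variance group is the (generally nontrivial) chirality group. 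The implication ``isomorphic $\Rightarrow$ covers with the given roots'' is valid precisely when the section in question is \emph{regular}, and establishing that regularity is the actual content of the paper's proof: since $n-1\notin I$ and $|I|<n-1$, \cref{prop:n-2-reg} shows the $(n-2)$-faces of $\calP$ (which are the facets of $\calK$) are regular, and full transitivity of $\calP$ (from $|I|<n-1$) makes them all isomorphic; then $X(\overline{\calK}\mid\calK)$ is trivial and one concludes via \cite[Corollary 6.5]{mixing-and-monodromy}. Your proof never invokes \cref{prop:n-2-reg} or any regularity of sections, which is the missing ingredient.

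The gap is repairable in your facet-transitive branch, since the facets of $\calK$ are in fact regular for the reason just given. But your vertex-transitive branch (which you need exactly when $I=\{0,\dots,n-3\}$, so $\calK$ fails to be facet-transitive) cannot be patched the same way: there the relevant sections are the vertex-figures of $\calK$, i.e.\ the medial sections of $\calP$, and nothing in the hypotheses forces those to be regular, so ``all isomorphic'' again does not yield the rooted coverings the dual of \cref{facets-cover-polytopal} requires. The dichotomy is also unnecessary: even when $\calK$ itself is not facet-transitive, all of its facets are isomorphic because they are $(n-2)$-faces of the fully transitive $\calP$ (the identifications come from automorphisms of $\calP$, not of $\calK$), and they are regular; so the facet route works uniformly once regularity is in hand.
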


    \begin{proof}
    Since $|I| < n-1$, $\calP$ is fully transitive. Furthermore, the fact that $n-1 \not \in I$ and that there is some $j < n-1$ such that $j \not \in I$ implies that the facets of $\calK$ are regular (see \cref{prop:n-2-reg}). Then the result follows from \cite[Corollary 6.5]{mixing-and-monodromy}.
    \end{proof}

    \begin{corollary}
    \label{cor:chiral-like-src}
    Suppose that $\calP$ is a medial-section-transitive $n$-polytope in class $2_I$, with $I \cap \{0,n-1\} = \emptyset$. If the facets, vertex-figures, and medial sections of $\calP$ are isomorphic to $\calK, \calL$, and $\calN$, respectively, then the smallest regular cover of $\calP$ is a polytope if and only if  $\VG{\overline{\calK}}{\calK} \cap \VG{\overline{\calL}}{\calL} \leq \VG{\overline{\calN}}{\calN}.$ 
    \end{corollary}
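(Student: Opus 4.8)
The plan is to obtain \cref{cor:chiral-like-src} as an immediate specialization of \cref{gen-src-poly}(1): the hypothesis $I \cap \{0, n-1\} = \emptyset$ forces the polytopality requirements appearing there (namely that $\calK \mix \overline{\calK}$ and $\calL \mix \overline{\calL}$ be polytopes) to hold automatically, so that the criterion reduces to the variance-group containment.

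First I would observe that $I \cap \{0,n-1\} = \emptyset$ means $I \subseteq \{1, \ldots, n-2\}$; in particular $n-1 \notin I$ and $|I| \leq n-2 < n-1$. Hence \cref{2I-facets-polytopal} applies verbatim to $\calP$ and gives that $\calK \mix \overline{\calK}$ is a polytope.

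Next I would handle the vertex-figures by a dual argument. Since $0 \notin I$, the dual $\calP^{\delta}$ is an $n$-polytope in class $2_{I^{\delta}}$ with $n-1 \notin I^{\delta}$ and $|I^{\delta}| = |I| < n-1$, so \cref{2I-facets-polytopal} applies to $\calP^{\delta}$ as well. Its facets are the duals of the vertex-figures $\calL$ of $\calP$, and since dualizing commutes both with the mix and with the passage from a premaniplex to its opposite-orbit counterpart, and since a premaniplex is polytopal precisely when its dual is, this yields that $\calL \mix \overline{\calL}$ is a polytope.

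Finally, $\calP$ is medial-section-transitive by hypothesis, so \cref{gen-src-poly}(1) says that the smallest regular cover $\calP \mix \overline{\calP}$ is a polytope if and only if $\calK \mix \overline{\calK}$ and $\calL \mix \overline{\calL}$ are polytopes and $\VG{\overline{\calK}}{\calK} \cap \VG{\overline{\calL}}{\calL} \leq \VG{\overline{\calN}}{\calN}$. Having shown the first two conditions hold unconditionally, the criterion collapses to exactly the stated containment. The only mildly delicate point is the bookkeeping in the dual argument for the vertex-figures — verifying that the dual, the mix, and the bar operation interact as claimed — but this is routine, since dualizing and re-rooting in the opposite flag-orbit are independent operations.
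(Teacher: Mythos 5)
Your proposal is correct and follows essentially the same route as the paper: both deduce polytopality of $\calK \mix \overline{\calK}$ from \cref{2I-facets-polytopal} using $n-1 \notin I$ and $|I| < n-1$, handle $\calL \mix \overline{\calL}$ by the dual argument using $0 \notin I$, and then invoke the first part of \cref{gen-src-poly}. The extra bookkeeping you flag about duality commuting with the mix and the opposite-orbit operation is indeed routine and is left implicit in the paper's phrase ``the dual of \cref{2I-facets-polytopal}.''
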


    \begin{proof}
    When $I \cap \{0,n-1\} = \emptyset$, that implies that $|I| < n-1$ and $n-1 \not \in I$, so \cref{2I-facets-polytopal} implies that the facets of $\calP \mix \overline{\calP}$ are polytopal. Similarly, the dual of \cref{2I-facets-polytopal} implies that the vertex-figures of $\calP \mix \overline{\calP}$ are polytopal since $0 \not \in I$. The result then follows from the first part of \cref{gen-src-poly}.
    \end{proof}

    Note that in particular, chiral polytopes satisfy the conditions of \cref{cor:chiral-like-src}, and so we now obtain \cite[Theorem 3.1]{chiral-covers} as a corollary.

\subsection{Mixes of $\calT$-admissible maniplexes with non-$\calT$-admissible maniplexes}

    How should we proceed if we want to mix two maniplexes that have no $\calT$ such that they are both $\calT$-admissible? Let us focus for a moment on a special case: suppose that $\calM_1$ is in class $2_I$ and that $\calM_2$ is regular and $I$-non-orientable. Then since $\calM_1$ covers $\textbf{2}_I$, it follows that
    \[ \calM_1 \mix \calM_2 \cong (\calM_1 \mix \textbf{2}_I) \mix \calM_2 \cong \calM_1 \mix (\textbf{2}_I \mix \calM_2). \]
    If $\calM_1$ is a polytope, then we can apply \cref{thm:all-i-admissible-mix} by replacing $\calM_2$ with its $I$-double $\textbf{2}_I \mix \calM_2$. On the other hand, if $\calM_2$ is a polytope and $\calM_1$ is not, then we can only apply \cref{thm:all-i-admissible-mix} if $\textbf{2}_I \mix \calM_2$ is a polytope; see \cref{i-double-poly}. 

    The above argument can be generalized to the case where $\calM_1$ is $\calT$-admissible and $\calM_2$ is regular but not $\calT$-admissible; in that case,
    \[ \calM_1 \mix \calM_2 \cong \calM_1 \mix (\calT \mix \calM_2),\]
    which is now the mix of two $\calT$-admissible premaniplexes. In principle, this could be generalized further to the case where $\calM_2$ is `$\calT$-rotary' but not `$\calT$-orientable'. 

    We conclude by presenting several open problems that would be natural candidates for further investigation.

    \begin{problem}
    Generalize \cref{i-double-poly} to characterize when a $\calT$-admissible cover of a regular polytope is a polytope.
    \end{problem}

    \begin{problem}
    Describe conditions under which the mix of two polytopes in class $2_I$ is still in class $2_I$ (rather than being regular). 
    \end{problem}

    \begin{problem}
    Determine general criteria for when the smallest regular cover of a polytope is a polytope.
    \end{problem}

\bibliographystyle{amsplain}
\bibliography{2-orbit}

\end{document}